\numberwithin{equation}{section}
\let\c@equation\c@subsection
\newtheorem{cor}[subsection]{Corollary}
\newtheorem{lem}[subsection]{Lemma}
\newtheorem{prop}[subsection]{Proposition}
\newtheorem{thm}[subsection]{Theorem}
\theoremstyle{definition}
\newtheorem{rem}[subsection]{Remark}
\theoremstyle{remark}
\renewcommand{\eqref}[1]{(\ref{#1})}
\tikzset{shorten <>/.style={shorten >=#1,shorten <=#1}}
\newcommand{\nc}{\newcommand}
\nc{\renc}{\renewcommand}
\nc{\ssec}{\subsection}
\nc{\sssec}{\subsubsection}
\nc{\on}{\operatorname}
\nc{\term}[1]{#1\xspace}
\DeclareMathSymbol{A}{\mathalpha}{operators}{`A}
\DeclareMathSymbol{B}{\mathalpha}{operators}{`B}
\DeclareMathSymbol{C}{\mathalpha}{operators}{`C}
\DeclareMathSymbol{D}{\mathalpha}{operators}{`D}
\DeclareMathSymbol{E}{\mathalpha}{operators}{`E}
\DeclareMathSymbol{F}{\mathalpha}{operators}{`F}
\DeclareMathSymbol{G}{\mathalpha}{operators}{`G}
\DeclareMathSymbol{H}{\mathalpha}{operators}{`H}
\DeclareMathSymbol{I}{\mathalpha}{operators}{`I}
\DeclareMathSymbol{J}{\mathalpha}{operators}{`J}
\DeclareMathSymbol{K}{\mathalpha}{operators}{`K}
\DeclareMathSymbol{L}{\mathalpha}{operators}{`L}
\DeclareMathSymbol{M}{\mathalpha}{operators}{`M}
\DeclareMathSymbol{N}{\mathalpha}{operators}{`N}
\DeclareMathSymbol{O}{\mathalpha}{operators}{`O}
\DeclareMathSymbol{P}{\mathalpha}{operators}{`P}
\DeclareMathSymbol{Q}{\mathalpha}{operators}{`Q}
\DeclareMathSymbol{R}{\mathalpha}{operators}{`R}
\DeclareMathSymbol{S}{\mathalpha}{operators}{`S}
\DeclareMathSymbol{T}{\mathalpha}{operators}{`T}
\DeclareMathSymbol{U}{\mathalpha}{operators}{`U}
\DeclareMathSymbol{V}{\mathalpha}{operators}{`V}
\DeclareMathSymbol{W}{\mathalpha}{operators}{`W}
\DeclareMathSymbol{X}{\mathalpha}{operators}{`X}
\DeclareMathSymbol{Y}{\mathalpha}{operators}{`Y}
\DeclareMathSymbol{Z}{\mathalpha}{operators}{`Z}
\nc{\sA}{\ensuremath{\mathcal{A}}\xspace}
\nc{\sB}{\ensuremath{\mathcal{B}}\xspace}
\nc{\sC}{\ensuremath{\mathcal{C}}\xspace}
\nc{\sD}{\ensuremath{\mathcal{D}}\xspace}
\nc{\sE}{\ensuremath{\mathcal{E}}\xspace}
\nc{\sF}{\ensuremath{\mathcal{F}}\xspace}
\nc{\sG}{\ensuremath{\mathcal{G}}\xspace}
\nc{\sH}{\ensuremath{\mathcal{H}}\xspace}
\nc{\sI}{\ensuremath{\mathcal{I}}\xspace}
\nc{\sJ}{\ensuremath{\mathcal{J}}\xspace}
\nc{\sK}{\ensuremath{\mathcal{K}}\xspace}
\nc{\sL}{\ensuremath{\mathcal{L}}\xspace}
\nc{\sM}{\ensuremath{\mathcal{M}}\xspace}
\nc{\sN}{\ensuremath{\mathcal{N}}\xspace}
\nc{\sO}{\ensuremath{\mathcal{O}}\xspace}
\nc{\sP}{\ensuremath{\mathcal{P}}\xspace}
\nc{\sQ}{\ensuremath{\mathcal{Q}}\xspace}
\nc{\sR}{\ensuremath{\mathcal{R}}\xspace}
\nc{\sS}{\ensuremath{\mathcal{S}}\xspace}
\nc{\sT}{\ensuremath{\mathcal{T}}\xspace}
\nc{\sU}{\ensuremath{\mathcal{U}}\xspace}
\nc{\sV}{\ensuremath{\mathcal{V}}\xspace}
\nc{\sW}{\ensuremath{\mathcal{W}}\xspace}
\nc{\sX}{\ensuremath{\mathcal{X}}\xspace}
\nc{\sY}{\ensuremath{\mathcal{Y}}\xspace}
\nc{\sZ}{\ensuremath{\mathcal{Z}}\xspace}
\nc{\bA}{\ensuremath{\mathbf{A}}\xspace}
\nc{\bB}{\ensuremath{\mathbf{B}}\xspace}
\nc{\bC}{\ensuremath{\mathbf{C}}\xspace}
\nc{\bD}{\ensuremath{\mathbf{D}}\xspace}
\nc{\bE}{\ensuremath{\mathbf{E}}\xspace}
\nc{\bF}{\ensuremath{\mathbf{F}}\xspace}
\nc{\bG}{\ensuremath{\mathbf{G}}\xspace}
\nc{\bH}{\ensuremath{\mathbf{H}}\xspace}
\nc{\bI}{\ensuremath{\mathbf{I}}\xspace}
\nc{\bJ}{\ensuremath{\mathbf{J}}\xspace}
\nc{\bK}{\ensuremath{\mathbf{K}}\xspace}
\nc{\bL}{\ensuremath{\mathbf{L}}\xspace}
\nc{\bM}{\ensuremath{\mathbf{M}}\xspace}
\nc{\bN}{\ensuremath{\mathbf{N}}\xspace}
\nc{\bO}{\ensuremath{\mathbf{O}}\xspace}
\nc{\bP}{\ensuremath{\mathbf{P}}\xspace}
\nc{\bQ}{\ensuremath{\mathbf{Q}}\xspace}
\nc{\bR}{\ensuremath{\mathbf{R}}\xspace}
\nc{\bS}{\ensuremath{\mathbf{S}}\xspace}
\nc{\bT}{\ensuremath{\mathbf{T}}\xspace}
\nc{\bU}{\ensuremath{\mathbf{U}}\xspace}
\nc{\bV}{\ensuremath{\mathbf{V}}\xspace}
\nc{\bW}{\ensuremath{\mathbf{W}}\xspace}
\nc{\bX}{\ensuremath{\mathbf{X}}\xspace}
\nc{\bY}{\ensuremath{\mathbf{Y}}\xspace}
\nc{\bZ}{\ensuremath{\mathbf{Z}}\xspace}
\nc{\dA}{\ensuremath{\mathds{A}}\xspace}
\nc{\dB}{\ensuremath{\mathds{B}}\xspace}
\nc{\dC}{\ensuremath{\mathds{C}}\xspace}
\nc{\dD}{\ensuremath{\mathds{D}}\xspace}
\nc{\dE}{\ensuremath{\mathds{E}}\xspace}
\nc{\dF}{\ensuremath{\mathds{F}}\xspace}
\nc{\dG}{\ensuremath{\mathds{G}}\xspace}
\nc{\dH}{\ensuremath{\mathds{H}}\xspace}
\nc{\dI}{\ensuremath{\mathds{I}}\xspace}
\nc{\dJ}{\ensuremath{\mathds{J}}\xspace}
\nc{\dK}{\ensuremath{\mathds{K}}\xspace}
\nc{\dL}{\ensuremath{\mathds{L}}\xspace}
\nc{\dM}{\ensuremath{\mathds{M}}\xspace}
\nc{\dN}{\ensuremath{\mathds{N}}\xspace}
\nc{\dO}{\ensuremath{\mathds{O}}\xspace}
\nc{\dP}{\ensuremath{\mathds{P}}\xspace}
\nc{\dQ}{\ensuremath{\mathds{Q}}\xspace}
\nc{\dR}{\ensuremath{\mathds{R}}\xspace}
\nc{\dS}{\ensuremath{\mathds{S}}\xspace}
\nc{\dT}{\ensuremath{\mathds{T}}\xspace}
\nc{\dU}{\ensuremath{\mathds{U}}\xspace}
\nc{\dV}{\ensuremath{\mathds{V}}\xspace}
\nc{\dW}{\ensuremath{\mathds{W}}\xspace}
\nc{\dX}{\ensuremath{\mathds{X}}\xspace}
\nc{\dY}{\ensuremath{\mathds{Y}}\xspace}
\nc{\dZ}{\ensuremath{\mathds{Z}}\xspace}
\nc{\bbA}{\ensuremath{\mathbb{A}}\xspace}
\nc{\bbB}{\ensuremath{\mathbb{B}}\xspace}
\nc{\bbC}{\ensuremath{\mathbb{C}}\xspace}
\nc{\bbD}{\ensuremath{\mathbb{D}}\xspace}
\nc{\bbE}{\ensuremath{\mathbb{E}}\xspace}
\nc{\bbF}{\ensuremath{\mathbb{F}}\xspace}
\nc{\bbG}{\ensuremath{\mathbb{G}}\xspace}
\nc{\bbH}{\ensuremath{\mathbb{H}}\xspace}
\nc{\bbI}{\ensuremath{\mathbb{I}}\xspace}
\nc{\bbJ}{\ensuremath{\mathbb{J}}\xspace}
\nc{\bbK}{\ensuremath{\mathbb{K}}\xspace}
\nc{\bbL}{\ensuremath{\mathbb{L}}\xspace}
\nc{\bbM}{\ensuremath{\mathbb{M}}\xspace}
\nc{\bbN}{\ensuremath{\mathbb{N}}\xspace}
\nc{\bbO}{\ensuremath{\mathbb{O}}\xspace}
\nc{\bbP}{\ensuremath{\mathbb{P}}\xspace}
\nc{\bbQ}{\ensuremath{\mathbb{Q}}\xspace}
\nc{\bbR}{\ensuremath{\mathbb{R}}\xspace}
\nc{\bbS}{\ensuremath{\mathbb{S}}\xspace}
\nc{\bbT}{\ensuremath{\mathbb{T}}\xspace}
\nc{\bbU}{\ensuremath{\mathbb{U}}\xspace}
\nc{\bbV}{\ensuremath{\mathbb{V}}\xspace}
\nc{\bbW}{\ensuremath{\mathbb{W}}\xspace}
\nc{\bbX}{\ensuremath{\mathbb{X}}\xspace}
\nc{\bbY}{\ensuremath{\mathbb{Y}}\xspace}
\nc{\bbZ}{\ensuremath{\mathbb{Z}}\xspace}
\nc{\mrm}[1]{\ensuremath{\mathrm{#1}}\xspace}
\nc{\mbf}[1]{\ensuremath{\mathbf{#1}}\xspace}
\nc{\mcal}[1]{\ensuremath{\mathcal{#1}}\xspace}
\nc{\msc}[1]{\ensuremath{\mathscr{#1}}\xspace}
\renc{\bar}[1]{\overline{#1}}
\let\sectsign\S
\let\S\relax
\nc{\sub}{\subset}
\nc{\too}{\longrightarrow}
\nc{\hook}{\hookrightarrow}
\nc*{\hooklongrightarrow}{\ensuremath{\lhook\joinrel\relbar\joinrel\rightarrow}}
\nc{\hooklong}{\hooklongrightarrow}
\nc{\twoheadlongrightarrow}{\relbar\joinrel\twoheadrightarrow}
\nc{\shiso}{\approx}
\nc{\isoto}{\xrightarrow{\sim}}
\nc{\isofrom}{\xleftarrow{\sim}}
\renc{\ge}{\geqslant}
\renc{\le}{\leqslant}
\renc{\geq}{\geqslant}
\renc{\leq}{\leqslant}
\nc{\id}{\mathrm{id}}
\nc{\can}{\mathrm{can}}
\DeclareMathOperator{\Hom}{\mathrm{Hom}}
\nc{\uHom}{\underline{\smash{\Hom}}}
\DeclareMathOperator{\Maps}{\mathrm{Maps}}
\DeclareMathOperator{\End}{\mathrm{End}}
\DeclareMathOperator{\Sym}{\mathrm{Sym}}
\nc{\Pre}{\mathrm{PSh}{}}
\nc{\uEnd}{\underline{\smash{\End}}}
\renc{\lim}{\operatorname*{lim}}
\nc{\colim}{\operatorname*{colim}}
\nc{\Cofib}{\on{Cofib}}
\nc{\Fib}{\on{Fib}}
\nc{\initial}{\varnothing}
\nc{\op}{\mathrm{op}}
\nc{\bDelta}{\mbf{\Delta}}
\nc{\DM}{\mbf{DM}}
\nc{\eff}{\mathrm{eff}}
\nc{\veff}{\mathrm{veff}}
\nc{\cyc}{{\mrm{cyc}}}
\nc{\corr}{{\on{corr}}}
\nc{\fet}{{\mrm{f\acute et}}}
\nc{\fsyn}{{\mrm{fsyn}}}
\nc{\fflat}{{\mrm{fflat}}}
\nc{\syn}{{\mrm{syn}}}
\nc{\Perf}{\mathcal{P}\mrm{erf}}
\nc{\Pic}{\mrm{Pic}}
\nc{\Rees}{\mrm{Rees}}
\nc{\Cor}{\mrm{Cor}}
\nc{\perf}{\mrm{perf}}
\nc{\oblv}{\on{oblv}}
\nc{\exact}{\on{exact}}
\def\C{\mathbf{C}}
\nc{\F}{{\on{F}}}
\nc{\clopen}{{\mrm{clopen}}}
\nc{\B}{\mrm{B}}
\nc{\D}{\mrm{D}}
\nc{\Fin}{\on{Fin}}
\nc{\Cut}{\on{Cut}}
\nc{\Cart}{\on{Cart}}
\nc{\pairs}{\mathsf{pairs}}
\nc{\Pairs}{\mathrm{Pair}}
\nc{\Trip}{\mathrm{Trip}}
\nc{\Lab}{\mathrm{Lab}}
\nc{\coCart}{\mathrm{coCart}}
\nc{\RKE}{\mathrm{RKE}}
\nc{\strict}{\mathrm{strict}}
\nc{\Emb}{\mathrm{Emb}}
\nc{\EMB}{\mathcal{E}\mathrm{mb}}
\nc{\Split}{\mathrm{Split}}
\nc{\Set}{\mathrm{Set}}
\nc{\sSets}{\mathrm{sSets}}
\nc{\pb}{\mathrm{pb}}
\nc{\lci}{\mathrm{lci}}
\DeclareMathOperator*{\cofib}{cofib}
\nc{\diff}{\mrm{diff}}
\nc{\gp}{\mrm{gp}}
\nc{\chr}{\mrm{char}}
\nc{\mgp}{\mrm{mot-gp}}
\nc{\FSyn}{\mrm{FSyn}}
\nc{\FFlat}{{\mrm{FFlat}}}
\nc{\hfflat}{h^{\mrm{fflat}}}
\nc{\FEt}{\mrm{FEt}}
\nc{\Spc}{\mrm{Spc}}
\nc{\Ob}{\mrm{Ob}}
\nc{\Spt}{\mrm{Spt}}
\nc{\HZ}{\mathrm{H}\mathbf{Z}}
\nc{\T}{\bT}
\nc{\suspinf}{\Sigma^\infty}
\nc{\h}{\mrm{h}}
\nc{\uhom}{\underline{\mathrm{Hom}}}
\nc{\umap}{\underline{\mathrm{Maps}}}
\renc{\H}{\bH}
\nc{\Einfty}{{\sE_\infty}}
\nc{\Eone}{{\sE_1}}
\nc{\Stab}{\mrm{Stab}}
\nc{\lax}{{\mrm{lax}}}
\nc{\cocart}{{\mrm{cocart}}}
\nc{\Sch}{\mrm{Sch}}
\nc{\dSch}{\mrm{dSch}}
\nc{\Aff}{\mrm{Aff}}
\nc{\SmAff}{\mrm{SmAff}}
\nc{\dAff}{\mrm{dAff}}
\nc{\Fr}{\on{Fr}}
\nc{\A}{\mathbf{A}}
\nc{\N}{\mathbf{N}}
\nc{\Z}{\mathbf{Z}}
\nc{\Q}{\mathbf{Q}}
\nc{\Oo}{\mathcal{O}} 
\nc{\red}{{\on{red}}}
\nc{\Voev}{{\on{Voev}}}
\nc{\Corr}{\mrm{Corr}}
\nc{\Span}{\mathbf{Corr}}
\nc{\Gap}{\mrm{Gap}}
\nc{\Filt}{\mrm{Filt}}
\nc{\Corrfr}{\Corr^{\fr}}
\nc{\Corrvfr}{\Corr^{\Vfr}}
\nc{\Spec}{\on{Spec}}
\nc{\Sm}{\mrm{Sm}}
\nc{\QSm}{\mrm{QSm}}
\nc{\Gm}{\mathbf{G}_{\mrm{m}}}
\renc{\P}{\bP}
\nc{\Zar}{\mathrm{Zar}}
\nc{\et}{\mathrm{\acute et}}
\nc{\all}{\mathrm{all}}
\nc{\fold}{\mathrm{fold}}
\nc{\Fun}{\mathrm{Fun}}
\nc{\Ho}{\mathrm{Ho}}
\nc{\Segal}{\mathrm{Segal}}
\nc{\Mon}{\mrm{Mon}{}}
\nc{\Ab}{\mrm{Ab}}
\nc{\Gr}{\mrm{Gr}}
\nc{\Sh}{\on{Sh}}
\nc{\Shv}{\on{Shv}}
\nc{\M}{\mrm{M}}
\nc{\Lhtp}{L_{\A^1}}
\nc{\Lmot}{L_{\mrm{mot}}}
\nc{\mot}{\mrm{mot}}
\nc{\SH}{\mbf{SH}}
\nc{\RR}{\mbf{R}}
\nc{\CC}{\mbf{C}}
\nc{\Mod}{\mrm{Mod}}
\nc{\QCoh}{\mrm{QCoh}}
\nc{\MonUnit}{\mbf{1}}
\nc{\tr}{\on{tr}}
\nc{\vop}{\mrm{vop}}
\nc{\fr}{{\on{fr}}}
\nc{\Ar}{\mrm{Ar}}
\nc{\Vfr}{\on{Vfr}}
\nc{\frdiff}{{\on{frdiff}}}
\nc{\frGys}{\on{frGys}}
\nc{\SHfr}{\SH^{\fr}}
\nc{\SHfrdiff}{\SH^{\frdiff}}
\nc{\SHfrGys}{\SH^{\frGys}}
\nc{\InftyCat}{\infty\textnormal{-}\mrm{Cat}}
\nc{\TriCat}{\mathrm{TriCat}}
\nc{\Cat}{\mathrm{1\textnormal{-}Cat}}
\nc{\Th}{\on{Th}}
\def\G{\bG}
\nc{\CMon}{\mrm{CMon}{}}
\nc{\CAlg}{\mrm{CAlg}{}}
\nc{\MGL}{\mrm{MGL}}
\nc{\PMGL}{\mathrm{PMGL}}
\nc{\KGL}{\mrm{KGL}}
\nc{\kgl}{\mrm{kgl}}
\nc{\MSL}{\mrm{MSL}}
\nc{\MSp}{\mrm{MSp}}
\nc{\Seg}{\mrm{Seg}{}}
\nc{\Tw}{\mrm{Tw}}
\nc{\sslash}{/\mkern-6mu/}
\nc{\PrL}{\mrm{Pr}^\mrm{L}}
\nc{\PrR}{\mrm{Pr}^\mrm{R}}
\nc{\pr}{\mrm{pr}}
\nc{\efr}{\mrm{efr}}
\nc{\nfr}{\mrm{nfr}}
\nc{\dfr}{\mrm{fr}}
\nc{\tfr}{\mrm{tfr}}
\nc{\Vect}{\mathcal{V}\mrm{ect}}
\nc{\sVect}{\mrm{sVect}}
\nc{\fix}{\mrm{fix}}
\nc{\Hilb}{\mathrm{Hilb}}
\nc{\flci}{\mathrm{flci}}
\nc{\Isom}{\mathrm{Isom}}
\nc{\GL}{\mathrm{GL}}
\nc{\BGL}{\mathrm{BGL}}
\nc{\SL}{\mathrm{SL}}
\nc{\Sp}{\mathrm{Sp}}
\nc{\fin}{\mathrm{fin}}
\nc{\cl}{\mathrm{cl}}
\nc{\cn}{\mathrm{cn}}
\nc{\sm}{\mathrm{sm}}
\nc{\heart}{\heartsuit}
\renc{\o}{\mrm{or}}
\nc{\GW}{\mrm{GW}}
\nc{\ev}{\mrm{ev}}
\nc{\FSYN}{\mathcal{FS}\mrm{yn}}
\nc{\FFLAT}{\mathcal{FF}\mrm{lat}}
\nc{\mrk}{\mrm{mrk}}%
\nc{\FFmrk}{\mathcal{FF}\mrm{lat}^{\mrk}}
\nc{\FFnu}{\mathcal{FF}\mrm{lat}^{\mrm{nu}}}
\nc{\FFbas}{\mathcal{FF}\mrm{lat}^{\mrm{bas}}}
\nc{\FQSM}{\mathcal{FQS}\mathrm{m}}
\nc{\Quot}{\mathrm{Quot}}    
\nc{\COH}{\mathcal{C}\mathrm{oh}}
\nc{\gr}{\mrm{gr}}
\nc{\FGORIS}{\mathcal{FG}\mrm{or}\mathcal{I}\mrm{s}}
\nc{\VectOr}{\mathcal{V}\mrm{ect}\mathcal{O}\mrm{r}}
\let\phi\varphi
\let\emptyset\varnothing
\nc{\Nis}{\mrm{Nis}}
\nc{\sPre}{\mrm{sPre}}
\nc{\robber}{\mathcal{R}}%
\nc{\mv}{\mrm{mv}}
\nc{\const}{\mrm{const}}
\nc{\robbermv}{\robber^{\mv}}
\nc{\robberconst}{\robber^{\const}}
\nc{\robbernot}{\robber_0}%
\nc{\sectionmv}{i^{\mv}}%
\nc{\sectionconst}{i^{\const}}%
\nc{\sectionnot}{i}%
\nc{\st}{\mathrm{st}}
\nc{\Tor}{\mrm{Tor}}
\newcommand{\mm}{\mathfrak{m}}
\nc{\inftyCat}{\term{$\infty$-category}}
\nc{\inftyCats}{\term{$\infty$-categories}}
\nc{\inftyOneCat}{\term{$(\infty,1)$-category}}
\nc{\inftyOneCats}{\term{$(\infty,1)$-categories}}
\nc{\inftyGrpd}{\term{$\infty$-groupoid}}
\nc{\inftyGrpds}{\term{$\infty$-groupoids}}
\nc{\inftyTop}{\term{$\infty$-topos}}
\nc{\inftyTops}{\term{$\infty$-toposes}}
\nc{\inftyTwoCat}{\term{$(\infty,2)$-category}}
\nc{\inftyTwoCats}{\term{$(\infty,2)$-categories}}
\title{The Hilbert scheme of infinite affine space and algebraic
K-theory}
\author[M. Hoyois]{Marc Hoyois}
\address{Fakult\"at f\"ur Mathematik\\
Universit\"at Regensburg\\
Universit\"atsstr. 31\\
93040 Regensburg\\
Germany}
\email{\href{mailto:marc.hoyois@ur.de}{marc.hoyois@ur.de}}
\urladdr{\url{http://www.mathematik.ur.de/hoyois/}}
\thanks{M.H., D.N., and M.Y.\ were supported by SFB 1085 ``Higher invariants''}
\thanks{J.J. was supported by NCN grant 2017/26/D/ST1/00913 and by the START
fellowship of the Foundation for Polish Science}
\thanks{B.T. was supported by NSF grant DMS-2054553}
\author[J. Jelisiejew]{Joachim Jelisiejew}
\address{Faculty of Mathematics, Informatics
and Mechanics\\
University of Warsaw\\
Banacha 2\\
02-097 Warsaw\\
Poland}
\email{\href{mailto:jjelisiejew@mimuw.edu.pl}{jjelisiejew@mimuw.edu.pl}}
\urladdr{\url{https://www.mimuw.edu.pl/~jjelisiejew/}}
\author[D. Nardin]{Denis Nardin}
\address{Fakult\"at f\"ur Mathematik\\
Universit\"at Regensburg\\
Universit\"atsstr. 31\\
93040 Regensburg\\
Germany}
\email{\href{mailto:denis.nardin@ur.de}{denis.nardin@ur.de}}
\urladdr{\url{https://homepages.uni-regensburg.de/~nad22969/}}
\author[B. Totaro]{Burt Totaro}
\address{UCLA Mathematics Department\\
Box 951555\\
Los Angeles, CA 90095-1555\\
U.S.A.}
\email{\href{mailto:totaro@math.ucla.edu}{totaro@math.ucla.edu}}
\urladdr{\url{https://www.math.ucla.edu/~totaro/}}
\author[M. Yakerson]{Maria Yakerson}
\address{Institute for Mathematical Research\\
ETH Z\"urich \\
R\"amistr. 101\\  
8092 Z\"urich\\
Switzerland}
\email{\href{mailto:maria.yakerson@math.ethz.ch}{maria.yakerson@math.ethz.ch}}
\urladdr{\url{https://www.muramatik.com}}
\date{\today}
\begin{document}

\begin{abstract} We study the Hilbert scheme $\Hilb_d(\A^\infty)$ from an $\A^1$-homotopical viewpoint and obtain applications to algebraic K-theory. We show that the Hilbert scheme $\Hilb_d(\A^\infty)$ is $\A^1$-equivalent to the Grassmannian of $(d-1)$-planes in $\A^\infty$. We then describe the $\A^1$-homotopy type of $\Hilb_d(\A^n)$ in a range, for $n$ large compared to $d$. For example, we compute the integral cohomology of $\Hilb_d(\A^n)(\C)$ in a range.
	We also deduce that the forgetful map $\FFLAT\to\Vect$ from the moduli stack of finite locally free schemes to that of finite locally free sheaves is an $\A^1$-equivalence after group completion.
	This implies that the moduli stack $\FFLAT$, viewed as a presheaf with framed transfers, is a model for the effective motivic spectrum $\kgl$ representing algebraic K-theory. Combining our techniques with the recent work of Bachmann, we obtain Hilbert scheme models for the $\kgl$-homology of smooth proper schemes over a perfect field. 
\end{abstract}

\maketitle

\parskip 0.2cm

\parskip 0pt
\tableofcontents

\parskip 0.2cm
\vspace{-2em}

\section{Introduction}
\label{sec:intro}

In this paper we analyze the Hilbert scheme of points from the $\A^1$-homotopical perspective, yielding topological information about the Hilbert scheme as well as new geometric models for algebraic K-theory. For simplicity, schemes in the introduction are assumed to be over a perfect field $k$, even though many of our results hold over an arbitrary base scheme.

The Hilbert scheme $\Hilb_d(X)$ classifies zero-dimensional degree $d$ closed subschemes of a fixed scheme $X$. For $X$ a smooth surface, the Hilbert scheme is a smooth variety of dimension $2d$. It has led to rich developments in algebraic geometry, geometric representation theory and string theory~\cite{Beauville__Hilbert_K3, GoettscheICM, Haiman, nakajima_lectures_on_Hilbert_schemes}. For higher-dimensional varieties $X$, the picture is more obscure: the Hilbert scheme is singular (for all $d>3$), has many irreducible components (for large $d$), and almost arbitrary singularities (for $\dim X\geq 16$), see~\cite{Iarrobino, JelisiejewPathologies, Miller_Sturmfels}. Many natural questions remain open~\cite{aimpl}.

Until now, little was known about the topology of $\Hilb_d(X)$ even for $X$ an affine or projective space. We have an explicit basis for the cohomology of $\Hilb_d(\A^2)$ and its motive is pure Tate, thanks to the Bia{\l}ynicki-Birula decomposition for a generic $\G_m$-action on $\A^2$, see \cite[Appendix A]{HPL} or \cite{Ellingsrud_Stromme__On_the_homology}. For $\Hilb_d(\A^n)$ with $n > 2$ this method fails due to the presence of singularities and the motive is not known. The only general results on the topology of those schemes were that $\Hilb_d(\P^n)$ and $\Hilb_d(\A^n)$ are connected, and $\Hilb_d(\P^n)$ is simply connected \cite{HartshorneHilb, Horrocks__Hilb}.

We show that the situation simplifies after stabilization with respect to $n$. We consider the ind-scheme $\Hilb_d(\A^\infty)=\colim_n\Hilb_d(\A^n)$ and construct an $\A^1$-equivalence\footnote{Here and further in the Introduction, we say that a map of presheaves is an $\A^1$-equivalence if it becomes an $\A^1$-equivalence when restricted to affine schemes; in particular, such a map is a motivic equivalence.} in Theorem~\ref{thm:stack-equivalence}:
\begin{equation}\label{eq:Hilb vs Gr}
\Hilb_d(\A^\infty) \simeq \Gr_{d-1}(\A^\infty).
\end{equation}
This implies that for any oriented cohomology theory $A^*$ on $k$-schemes in the sense of \cite[Example 2.1.4]{deglise2011orientation}, such as $l$-adic cohomology, integral cohomology over the complex numbers, motivic cohomology, homotopy invariant K-theory, algebraic cobordism, etc., we have 
\[
A^*(\Hilb_d(\A^\infty)) \simeq A^*(\Spec k)[[c_1,\ldots,c_{d-1}]],\quad |c_i| = i.
\]

We also prove stability results, describing the $\A^1$-homotopy type of $\Hilb_d(\A^n)$ in a range, for $n$ large compared to $d$, see Theorem \ref{thm:Hilb-stability} and Corollary~\ref{cor:motivic-stability}. For example, over the complex numbers,
the homomorphism on integral cohomology 
\[H^*(\Gr_{d-1}(\A^\infty)(\C),\Z) \simeq \Z[c_1,\ldots,c_{d-1}]\to H^*(\Hilb_d(\A^n)(\C),\Z)\]
is an isomorphism in degrees at most $2n-2d+2$.

\begin{rem}
    One can also consider the Hilbert scheme $\Hilb_d(\A^\N)$ where $\A^\N=\Spec \Z[x_1,x_2,\dotsc]$, which contains the ind-scheme $\Hilb_d(\A^\infty)$ as a subfunctor. We will show that the inclusion $\Hilb_d(\A^\infty)\subset \Hilb_d(\A^\N)$ is an $\A^1$-equivalence, see Proposition~\ref{prop:h^fflat via Hilb}.
\end{rem}

To prove~\eqref{eq:Hilb vs Gr}, we consider the stack $\FFLAT_d$ of finite flat degree $d$ algebras and the stack $\Vect_d$ of rank $d$ locally free sheaves. The forgetful maps
\begin{gather*}
	\Hilb_d(\A^\infty) \to \FFLAT_d,\\
	\Gr_d(\A^\infty) \to \Vect_d
\end{gather*}
are shown to be $\A^1$-equivalences, see Propositions~\ref{prop:h^fflat via Hilb} and~\ref{prop:quot}.
For any ring $R$ and $R$-module $M$ the $R$-module $R\oplus M$ has a commutative unital $R$-algebra structure with $M^2 = 0$, called the \emph{square-zero extension}. This construction defines a morphism \[\alpha\colon\Vect_{d-1}\to \FFLAT_d.\] We show that $\alpha$ is an $\A^1$-homotopy equivalence, with inverse $\beta$ sending an $R$-algebra $A$ to the $R$-module $A/(R\cdot 1_A)$. The $\A^1$-homotopy between $\alpha\beta$ and the identity is obtained using the Rees algebra, making $\FFLAT_d$ a cone over $\Vect_{d-1}$. On the technical level, this is the key point to avoid dealing with the singularities of $\FFLAT_d$.


The stacks $\FFLAT = \coprod_{d} \FFLAT_d$ and $\Vect = \coprod_d \Vect_d$ are commutative monoids under direct sum. While the maps $\alpha$ and $\beta$ do not preserve this structure, we employ them together with McDuff--Segal's group completion theorem to deduce that the forgetful map $\FFLAT\to \Vect$ is an $\A^1$-equivalence after group completion, see Theorem~\ref{thm:main-gp}. As a consequence, we get a new description of algebraic $K$-theory in terms of Hilbert schemes: on affine schemes, there is an $\A^1$-equivalence
\begin{equation}
K \simeq \underline\Z  \times \Hilb_\infty(\A^\infty),
\end{equation}
see Corollary~\ref{cor:Hilb=Gr}.
Here, $K$ is the presheaf of K-theory spaces, $\underline\Z$ is the constant sheaf with value $\Z$, and $\Hilb_\infty(\A^\infty)$ is the colimit of the maps $\Hilb_d(\A^\infty)\to \Hilb_{d+1}(\A^{\infty+1})$.

Furthermore, we discuss consequences of these computations for stable motivic homotopy theory. These results are part of the theory of framed transfers, developed in \cite{voevodsky2001notes}, \cite{garkusha2014framed}, \cite{deloop1} and other works. 
 Some of the main results of this theory are as follows: every generalized motivic cohomology theory acquires a unique structure of framed transfers~\cite[Theorem~3.5.12]{deloop1}, and certain cohomology theories acquire a universality property with respect to their transfers. The big picture of various cohomology theories and the corresponding transfers is given in~\cite[\sectsign 1.1]{hermitian_robbery}. For example, the algebraic cobordism spectrum $\MGL$ represents the universal cohomology theory with finite syntomic transfers, i.e., with pushforwards along finite flat locally complete intersection morphisms. This is expressed by the equivalence of $\infty$-categories
 \[\Mod_{\MGL}(\SH(k)) \simeq \SH^\fsyn(k),\] 
 where $\SH^\fsyn(k)$ is the $\infty$-category of motivic spectra with finite syntomic transfers~\cite[Theorem~4.1.3]{deloop3}.

In Section~\ref{sec:kgl}  we work towards an analogous universal property for algebraic K-theory. We observe that the forgetful map $\FFLAT\to\Vect$ is a morphism of presheaves with framed transfers in the sense of \cite{deloop1}, and the framed suspension spectrum of $\Vect$ is the effective motivic K-theory spectrum $\kgl$. We therefore obtain an equivalence of motivic spectra
\begin{equation*}\label{eq:kgl-fflat}
\Sigma^\infty_{\T,\fr}\FFLAT \simeq \kgl,
\end{equation*}
see Theorem~\ref{thm:kgl-fflat}.
Bachmann employs it in \cite{BachmannFFlatCancellation} to obtain an equivalence of symmetric monoidal $\infty$-categories
\[\Mod_{\kgl}(\SH(k))\left[\tfrac{1}{e}\right] \simeq\SH^\fflat(k)\left[\tfrac{1}{e}\right]\]
where $e$ is the exponential characteristic of $k$. In that sense, $\kgl$ represents the universal generalized motivic cohomology theory with finite locally free transfers. 

Some of our results generalize from $\A^{\infty}_k$
to $\A^{\infty}_X=X\times \A^{\infty}$
for a smooth separated $k$-scheme $X$.
Answering a question by Rahul Pandharipande, we analyze the Hilbert scheme of points of $\A^\infty_X$ and obtain a motivic equivalence
\[\Omega^\infty_\T(\kgl\otimes\Sigma^\infty_\T X_+)\simeq \Hilb(\A^\infty_X)^\gp\]
for $X$ smooth and proper, thus providing a geometric model for the $\kgl$-homology of $X$, see Corollary~\ref{cor: kgl otimes X}. Here, the $\Einfty$-structure on the Hilbert scheme is given by ``taking disjoint unions of closed subschemes''.
As we explain in Section~\ref{sec: A^infty times X}, this equivalence is analogous to Segal's model for $\mathrm{ko}$-homology of a topological space~\cite[Section~1]{segal1977K-homology}. Indeed, the ind-scheme $\Hilb(\A^\infty_X)$ can be thought of as a geometric version of Segal's configuration space of points ``labeled'' by vector spaces: it has a canonical map to the symmetric power $\Sym(X)$, whose fiber over a $k$-rational point of the form $\sum_i d_i[x_i]$ is $\A^1$-equivalent to the stack $\prod_i\Vect_{d_i-1}$.

We proceed with investigating further the connection between algebraic cobordism and algebraic K-theory. In \cite{deloop3}, it is shown that the algebraic cobordism spectrum $\MGL$ is the framed suspension spectrum of the substack $\FSYN\subset\FFLAT$ of finite syntomic schemes.
We show in Section~\ref{sec:MGL->KGL} that the inclusion $\FSYN\subset \FFLAT$ induces the standard orientation map $\MGL\to\kgl$ upon taking framed suspension spectra. Combining the results of this paper with \cite[Theorem 1.1]{deloop4}, we obtain the following commutative diagram in $\Pre(\Sm_k)$, where the symbol $\simeq$ denotes a motivic equivalence and $+$ a motivic equivalence up to Quillen's plus construction:
\[
\begin{tikzcd}
	\Z\times \Hilb_\infty^\lci(\A^\infty) \ar[hook]{r} \ar{d}{\simeq} & \Z\times \Hilb_\infty(\A^\infty) \ar{d}{\simeq}  & \Z \times \Gr_\infty(\A^\infty) \ar{d}{\simeq} \\
	\Z \times \FSYN_\infty \ar[hook]{r} \ar["+"]{d}  & \Z \times \FFLAT_\infty \ar["\simeq"]{r}  \ar["\simeq"]{d} & \Z \times \Vect_\infty \ar["\simeq"]{d} \\
	\FSYN^\gp \ar{r} \ar["\simeq"]{d}  & \FFLAT^\gp \ar["\simeq"]{r}  & \Vect^\gp \ar["\simeq"]{d}  \\
	\Omega^\infty_\T\MGL  \ar{rr} &  &  \Omega^\infty_\T\KGL \rlap.
\end{tikzcd}
\]
Here $\Hilb_\infty(\A^\infty) = \colim_{d,n} \Hilb_d(\A^n)$ is the stabilization of the Hilbert scheme of affine space, and $\Hilb_\infty^\lci(\A^\infty)$ is the local complete intersection locus. 
The bottommost arrow in the diagram is the standard orientation of $\KGL$ and all the other horizontal arrows are the obvious forgetful morphisms. Moreover, the bottom rectangle is one of commutative monoids in presheaves with framed transfers (in particular, of presheaves of $\Einfty$-ring spaces).

One reason for our interest in $\Hilb_d(\A^{\infty})$ is a conjecture of Mike Hopkins stating that the motive of $\Omega^\infty_\T \MGL$ is pure Tate. As observed in \cite[Corollary 1.7]{deloop4}, this conjecture is equivalent to showing that $\Hilb_\infty^\lci(\A^\infty)$ has a pure Tate motive. While this problem remains open, we prove in Section~\ref{sec:deg 3} that $\Hilb_3^\lci(\A^\infty)$ has a pure Tate motive. In contrast, $\Hilb_3^{\lci}(\A^{n})$ is not pure Tate for finite $n$~\cite[Remark~1.8]{deloop4}.

\subsubsection*{Notation and terminology}
$\Sch$ denotes the category of schemes. If $S$ is a scheme, $\Sch_S$ denotes the category of $S$-schemes, $\Sm_S$ that of smooth $S$-schemes, $\H(S)$ the $\infty$-category of motivic spaces over $S$, and $\SH(S)$ that of motivic spectra, see \cite[\sectsign 2.2 and \sectsign 4.1]{norms}.
The \emph{Tate sphere} $\T$ in $\H(S)$
is defined as $\A^1/(\A^1-0)\simeq\Sigma\Gm\simeq (\P^1,1)$.

We only consider the Hilbert scheme of points in this paper. That is,
for a morphism of schemes $X\to S$ and any $S$-scheme $T$,
\[\Hilb_d(X/S)(T)=
\left\{
\begin{matrix}
Z \subset X_T\text{ closed subscheme such that }\\
Z \to T\text{ is finite locally free of degree }d
\end{matrix}
\right\}.
\]
This defines a scheme over $S$
whenever $X$ is affine
or quasi-projective over $S$ \cite[Remark 0B9B]{stacks} (more generally, it is an algebraic space over $S$ when $X$
is separated over $S$ \cite[Theorem~A(i)]{RydhQuot}). We denote by $\Hilb_d^\lci(X/S)$ the open subscheme of $\Hilb_d(X/S)$ given by the locus of local complete intersections. We write $\Hilb(X/S)=\coprod_{d\geq 0}\Hilb_d(X/S)$.  We sometimes omit $S$ from the notation when it is clear from the context.

We denote by $\Pre(\sC)$ the $\infty$-category of presheaves of spaces on $\sC$ and by $\Pre_\Sigma(\sC)\subset \Pre(\sC)$ the full subcategory of presheaves that transform finite sums into finite products \cite[\sectsign 5.5.8]{HTT}. If $\sC$ is a suitable category of schemes, $\Lhtp\colon \Pre(\sC)\to\Pre(\sC)$ (resp.\ $L_\mot\colon \Pre(\sC)\to\Pre(\sC)$) is the localization functor onto the full subcategory of $\A^1$-invariant presheaves (resp.\ of $\A^1$-invariant Nisnevich sheaves). A morphism $f$ in $\Pre(\sC)$ is called an \emph{$\A^1$-equivalence} (resp.\ a \emph{motivic equivalence}) if $\Lhtp(f)$ (resp.\ $L_\mot(f)$) is an equivalence. Thus, if $\sC=\Sm_S$, a motivic equivalence means an isomorphism in the $\infty$-category of motivic spaces $\H(S)$; this should not be confused with an isomorphism in the derived category of motives, $\DM(S)$, which is a weaker condition. Explicitly, a morphism $f\colon \mathcal{X}\to \mathcal{Y}$ in $\Pre(\sC)$ (e.g., a morphism of stacks) is an $\A^1$-equivalence if for every scheme $T$ the morphism $\mathcal{X}(T\times \A^\bullet) \to \mathcal{Y}(T\times\A^\bullet)$ induces an equivalence on geometric realizations.

\subsubsection*{Acknowledgments} We would like to thank Tom Bachmann, Elden Elmanto, Rahul Pandharipande, Bjorn Poonen, Oliver R\"ondigs, Vladimir Sosnilo, Markus Spitzweck, Ravi Vakil, Andrzej Weber, and the referees for discussions and suggestions. This work was partly initiated during Yakerson's visit to the Institute of Mathematics of Polish Academy of Sciences, and then continued during Jelisiejew's visit to the University of Regensburg. We would like to express our gratitude to both departments for their hospitality. We also thank the organizers of the Southern California Algebraic Geometry Seminar and the ETH Z\"urich Algebraic Geometry and Moduli Zoominar.

\section{The stacks $\FFLAT_d$ and $\Vect_{d-1}$ are
$\A^1$-equivalent}\label{sec:FFLATtoVect}

We say that a morphism of schemes $p\colon Y\to X$ is \emph{finite locally free} if it is finite, flat, and of finite presentation, or equivalently if it is affine and $p_*\sO_Y$ is a locally free $\sO_X$-module of finite rank.
We denote by $\Vect(X)$ the groupoid of finite locally free $\sO_X$-modules and by $\FFLAT(X)$ the groupoid of finite locally free $X$-schemes, or equivalently of finite locally free $\sO_X$-algebras. For $d\geq 0$, write $\FFLAT_d\subset \FFLAT$ for the subfunctor of finite locally free commutative $\sO_X$-algebras of rank $d$, and $\Vect_d\subset\Vect$ for the subfunctor of vector bundles of rank $d$, which is the classifying stack of the group scheme $\GL_d$.

The presheaves of groupoids $\Vect_d$ and $\FFLAT_d$ are algebraic stacks of finite type over $\Z$. In particular, Poonen observed that $\FFLAT_d$ is the quotient stack by $\GL_d$ of the affine scheme $Y_d$ of commutative algebra structures on $\A^d$, with $\A^d$ viewed as a free module of rank $d$ over $\Z$ \cite[Definition 3.2]{Poonen}. The singularities of the stack $\FFLAT_d$ are known to be almost arbitrarily bad: namely, $\coprod_{d\geq 0} \FFLAT_d$ satisfies Murphy's law up to retraction \cite[after Theorem 1.3]{JelisiejewPathologies}.

For $d\geq 1$,
we define morphisms $\alpha\colon \Vect_{d-1}\to \FFLAT_d$
and $\beta\colon \FFLAT_d\to \Vect_{d-1}$ as follows.
Given a vector bundle $\sV$ of rank $d-1$
over a scheme $X$,
let $\alpha(\sV)$ be the sheaf of $\sO_X$-algebras
$\sO_X\oplus \sV$, with $\sV$
a square-zero ideal. Given a sheaf of algebras
$\sA$ which is locally free of rank $d$ over $X$, let $\beta(\sA)$
be $\sA/\sO_X$, which is a vector bundle of rank $d-1$.
(Indeed, the global section $1$ of $\sA$ is nonzero at every point of $X$, so
locally we can complete it to a basis of $\sA$ and obtain that
$\sO_X \to \sA\to \sA/\sO_X$ is locally split exact.)

\begin{thm}\label{thm:stack-equivalence}
For each $d\geq 1$, the morphism $\alpha\colon \Vect_{d-1}\to\FFLAT_d$ in $\Pre(\Sch)$
is an $\A^1$-equivalence, with inverse $\beta$ up to $\A^1$-homotopy.
\end{thm}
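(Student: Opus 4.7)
The plan is to exhibit $\beta$ as a two-sided $\A^1$-inverse to $\alpha$. On the nose one has $\beta\circ\alpha=\id$: for $\sV\in\Vect_{d-1}(X)$, $\beta(\alpha(\sV))=(\sO_X\oplus\sV)/\sO_X=\sV$ naturally in $\sV$. So the content of the theorem is an $\A^1$-homotopy $H\colon\FFLAT_d\times\A^1\to\FFLAT_d$ with $H|_{t=1}=\id$ and $H|_{t=0}=\alpha\circ\beta$. I build it via a Rees-algebra deformation attached to the two-step filtration $\sO_X\subset\sA$, which geometrically scales the ``complement of $\sO_X$ in $\sA$'' to zero.

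Concretely, for $\sA\in\FFLAT_d(X)$, write $\sA[t]:=\sA\otimes_{\sO_X}\sO_X[t]$ and define
\[
\tilde\sA \;=\; \bigl\{f\in\sA[t] : f(0)\in\sO_X\bigr\} \;=\; \sO_X[t]+t\sA[t];
\]
this is an $\sO_X[t]$-subalgebra of $\sA[t]$. Writing $\bar\sA:=\sA/\sO_X$, the subsheaf $\tilde\sA$ fits into the $\sO_X[t]$-linear short exact sequence
\[
0\to\tilde\sA\to\sA[t]\to\bar\sA\to 0, \qquad f\mapsto[f(0)],
\]
which is locally split because $\bar\sA$ is finite locally free of rank $d-1$. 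A local splitting $\sA\cong\sO_X\oplus\bar\sA$ identifies $\tilde\sA$ with $\sO_X[t]\oplus t\bar\sA[t]$; this is a free $\sO_X[t]$-module of rank $d$ on $\{1,te_1,\ldots,te_{d-1}\}$, so $\tilde\sA\in\FFLAT_d(X\times\A^1)$. The construction is functorial: an isomorphism $\sA\isoto\sA'$ in $\FFLAT_d(X)$ preserves $1$ and hence the subsheaf $\sO_X$, so it restricts to $\tilde\sA\isoto\tilde\sA'$. Pulling back along the graphs $(\id,\phi)\colon X\to X\times\A^1$ of maps $\phi\colon X\to\A^1$ then produces the required natural transformation $H$.

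It remains to identify the two endpoint fibers. Evaluation at $t=1$ gives a surjection $\tilde\sA\to\sA$; any $f$ in the kernel can be written as $(t-1)g$ with $g\in\sA[t]$ satisfying $g(0)=-f(0)\in\sO_X$, so $g\in\tilde\sA$ and $\ker=(t-1)\tilde\sA$, giving $H|_{t=1}=\id$. At $t=0$, every term $t^n a$ with $n\ge 2$ lies in $t\tilde\sA$, so $\tilde\sA/t\tilde\sA$ is generated by the images of $\sO_X$ and $t\sA$, yielding $\tilde\sA/t\tilde\sA\cong\sO_X\oplus\bar\sA$ locally. The product of two lifts $c_i+ta_i$ equals $c_1c_2+t(c_1a_2+c_2a_1)+t^2a_1a_2$; the last term lies in $t\tilde\sA$, leaving the square-zero multiplication on $\sO_X\oplus\bar\sA$, which is precisely $\alpha(\beta(\sA))$. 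Hence $H|_{t=0}=\alpha\circ\beta$.

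The main technical point is verifying that $\tilde\sA$ is genuinely finite locally free of rank $d$ over $\sO_X[t]$ and that the square-zero structure at $t=0$ is obtained canonically in $\sA$; both reduce to the same local computation using the locally split exact sequence $0\to\sO_X\to\sA\to\bar\sA\to 0$ recalled before the statement, and I do not anticipate any deeper obstacle.
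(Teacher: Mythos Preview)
Your proof is correct and follows essentially the same approach as the paper. Your $\tilde\sA=\sO_X[t]+t\sA[t]$ is exactly the Rees algebra $\Rees(\sA)=\bigoplus_{i\geq 0}\sA_i t^i$ for the two-step filtration $\sA_0=\sO_X\cdot 1\subset\sA_1=\sA$, and your verifications of local freeness and of the fibers at $t=0,1$ amount to the content of the paper's Lemma on Rees algebras, carried out directly in this special case.
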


The proof uses a degeneration of commutative algebras
which we formulate in terms of the Rees algebra construction
(Lemma \ref{lem:rees}).
Poonen considered a closely related degeneration
\cite[Proposition 7.1]{Poonen}.

The following is standard when $R$ is a field,
a recent reference being \cite[Remark 2.6.5]{Schedler}.

\begin{lem}\label{lem:rees}
Let $R$ be a commutative ring, $A$ an associative $R$-algebra with
an increasing filtration by $R$-submodules
$A_0\subset A_1\subset\cdots$ with
$1\in A_0$, $A_i\cdot A_j\subset A_{i+j}$,
and $A=\bigcup_i A_i$. Assume that the $R$-modules
$\gr_iA=A_i/A_{i-1}$ are flat for all $i\geq 0$ (where $A_{-1}=0$). Let
\[
\Rees(A)=\bigoplus_{i\geq 0}A_i t^i\subset A[t]
\]
be the corresponding Rees algebra. Then $\Rees(A)$ is a flat $R[t]$-algebra with
$\Rees(A)/(t)\simeq \gr_*(A)$ and $\Rees(A)/(t-1)\simeq A$.
If $\gr_iA$ is finite locally free as a $R$-module for each $i$
and zero for $i$ sufficiently large, then $\Rees(A)$ is finite locally
free as an $R[t]$-module.
\end{lem}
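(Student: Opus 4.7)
The plan is first to verify the two quotient identifications directly, and then to establish flatness (and finite local freeness when applicable) by constructing a well-chosen increasing $R[t]$-module filtration of $\Rees(A)$.

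For $\Rees(A)/(t-1)\simeq A$, I would consider the evaluation map $\Rees(A)\to A$, $a t^i\mapsto a$ for $a\in A_i$; this is surjective because any $a\in A_n$ comes from $a t^n$. Multiplication by $t-1$ on $A[t]$ is a nonzerodivisor (compare top-degree coefficients), hence so is its restriction to $\Rees(A)$, and $(t-1)\Rees(A)$ evidently lies in the kernel. Conversely, given $p(t)=\sum_i a_i t^i\in\Rees(A)$ with $\sum_i a_i=0$, one writes $p(t)=\sum_i a_i(t^i-1)=(t-1)q(t)$ where $q(t)=\sum_{j\ge 0}\bigl(\sum_{i>j}a_i\bigr)t^j$; the key point is that $\sum_{i>j}a_i=-\sum_{i\le j}a_i\in A_j$, so $q\in\Rees(A)$. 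For $\Rees(A)/(t)\simeq\gr_* A$, the map $a t^i\mapsto [a]\in\gr_i A$ is surjective with kernel equal to $t\cdot\Rees(A)$: if $a_i\in A_{i-1}$ then $a_i t^i=t(a_i t^{i-1})\in t\Rees(A)$.

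For flatness, I would filter $\Rees(A)$ by the $R[t]$-submodules $M_n=\bigoplus_{j\ge 0}A_{\min(j,n)}t^j$; stability under $t$ is immediate from $A_{\min(j,n)}\subset A_{\min(j+1,n)}$, and the filtration is exhaustive. A direct check in each degree $j$ yields $M_n/M_{n-1}\simeq \gr_n A\otimes_R R[t]$ as $R[t]$-modules (the quotient is $\gr_n A$ in degrees $j\ge n$ and zero otherwise, with $t$ acting by degree shift). Since $\gr_n A$ is $R$-flat, this quotient is $R[t]$-flat by base change; induction on the short exact sequences $0\to M_{n-1}\to M_n\to M_n/M_{n-1}\to 0$ then shows each $M_n$ is $R[t]$-flat, and hence so is the filtered colimit $\Rees(A)=\colim_n M_n$. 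If moreover each $\gr_i A$ is finite locally free and vanishes for $i>N$, one has $A_i=A_N$ for $i\ge N$, so $\Rees(A)=M_N$; each $M_n/M_{n-1}\simeq \gr_n A\otimes_R R[t]$ is then finite locally free over $R[t]$, and since short exact sequences of finite locally free modules split Zariski-locally ($\mathrm{Ext}^1$ from a finite projective vanishes), induction gives that $\Rees(A)=M_N$ is finite locally free.

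The step that requires the most care is the identification of the kernel of the evaluation map $\Rees(A)\to A$: one must verify that the candidate polynomial $q(t)$ defined above actually lies in the subring $\Rees(A)\subset A[t]$, and this is precisely where the hypothesis $\sum_i a_i=0$ is consumed via the rewriting $\sum_{i>j}a_i=-\sum_{i\le j}a_i$. Everything else is either standard (flatness passes through filtered colimits and through extensions) or straightforward bookkeeping with the filtration $M_n$.
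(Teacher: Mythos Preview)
Your argument is correct. The quotient identifications are handled carefully (the check that $q(t)\in\Rees(A)$ is exactly right), and your filtration $M_n=\bigoplus_{j\ge 0}A_{\min(j,n)}t^j$ does the job: the identification $M_n/M_{n-1}\simeq(\gr_nA)\otimes_R R[t]$ is correct, and the passage to flatness via extensions and filtered colimits is standard. One minor remark: once the quotient $M_n/M_{n-1}$ is finite projective over $R[t]$, the sequence $0\to M_{n-1}\to M_n\to M_n/M_{n-1}\to 0$ splits \emph{globally}, not merely Zariski-locally, so your induction for finite local freeness goes through directly.

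The paper takes a shorter route, but only for the case actually used later (each $\gr_iA$ finite locally free and eventually zero). There the $\gr_iA$ are projective, so one chooses $R$-linear splittings of $A_{i-1}\hookrightarrow A_i$ and obtains in one stroke an isomorphism of $R[t]$-modules $\Rees(A)\simeq \gr_*(A)[t]$, from which finite local freeness is immediate. Your filtration argument is essentially the invariant version of this: instead of choosing splittings you record the filtration by the $M_n$ and observe that its associated graded is $\gr_*(A)\otimes_R R[t]$. What your approach buys is that it also proves the general flatness statement (which the paper asserts but does not write out), since it never requires projectivity of the graded pieces; the paper's choice-of-splitting argument does not apply in that generality.
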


\begin{proof}
It is immediate that $\Rees(A)/(t)\simeq \gr_*(A)$
and $\Rees(A)/(t-1)\simeq A$. The conclusions about flatness
are also straightforward; we only write out the proof
in the case needed below,
where $\gr_iA$ is finite locally free as an $R$-module
for each $i$ and zero for $i$ sufficiently large.
In this case, $\gr_i(A)$ is projective as an $R$-module for each $i$
\cite[Lemma 00NX]{stacks}, and so each inclusion $A_{i-1}\to A_i$
has an $R$-linear splitting. These splittings determine
an isomorphism $\Rees(A)\simeq \gr_*(A)[t]$
of $R[t]$-modules. Here $\gr_*(A)$ is a finite locally free
$R$-module, and so $\Rees(A)$ is a finite locally free
$R[t]$-module.
\end{proof}

\begin{proof}[Proof of Theorem~\ref{thm:stack-equivalence}]
The composition $\Vect_{d-1}\to\FFLAT_d\to\Vect_{d-1}$ is isomorphic
to the identity. It remains to show that the composition
$\FFLAT_d\to\Vect_{d-1}\to \FFLAT_d$ is $\A^1$-homotopic
to the identity. We define an explicit $\A^1$-homotopy as follows.

Let $R$ be a ring and let $A$ be a finite locally free $R$-algebra
of rank $d$. The point is that $A$ has a canonical increasing filtration
given by $A_0=R\cdot 1_A$
and $A_1=A$. The $R$-modules
$\gr_i(A)$ are locally free, and so Lemma \ref{lem:rees} applies.
The associated Rees algebra $\Rees(A)$ is locally free
of rank $d$ over $R[t]$, with $\Rees(A)/(t)\simeq \gr_*(A)$
and $\Rees(A)/(t-1)\simeq A$. Here $\gr_*(A)$ is
the square-zero extension $R\oplus A/(R\cdot 1)$.

The operation $A\mapsto \Rees(A)$ is functorial in $A$
and hence defines a morphism of stacks over $\Z$,
\[
\A^1\times \FFLAT_d\to \FFLAT_d.
\]
At $t=1$ in $\A^1$, this is the identity, while at $t=0$
it is isomorphic to the composition $\FFLAT_d\to\Vect_{d-1}\to \FFLAT_d$, sending $\sA$ to the square-zero extension $\sO_X\oplus \sA/(\sO_X\cdot 1_{\sA})$. This is the desired $\A^1$-homotopy.
\end{proof}

\begin{rem}\label{rem:stack-equivalence}
	Theorem~\ref{thm:stack-equivalence} implies that $\alpha\colon \Vect\to \FFLAT_{\geq 1}$ is an $\A^1$-equivalence in $\Pre(\Sch)$, with inverse $\beta$ up to $\A^1$-homotopy, since sheafification preserves $\A^1$-homotopies.
\end{rem}

\section{$\A^1$-equivalence between the group completions of the stacks $\FFLAT$ and $\Vect$}

The direct sum and tensor product of $\sO_X$-modules define an $\Einfty$-semiring structure on the stack $\Vect$.
Similarly, the disjoint union and cartesian product of schemes define an $\Einfty$-semiring structure on $\FFLAT$. In order to reduce the technical burden here, we postpone the formal construction of these semirings to Section~\ref{sec:kgl}. We denote by $\Vect^\gp$ and $\FFLAT^\gp$ the corresponding group completions, which are presheaves of $\Einfty$-ring spaces on the category of schemes. 

Forgetting the algebra structure on a finite locally free $\sO_X$-algebra defines a morphism of $\Einfty$-semirings $\eta\colon \FFLAT\to\Vect$. The main result of this section is the following theorem:

\begin{thm}\label{thm:main-gp}
	The map $\eta^\gp\colon \FFLAT^\gp \to \Vect^\gp$ is an $\A^1$-equivalence.
\end{thm}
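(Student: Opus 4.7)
The plan is to combine the McDuff--Segal group completion theorem with the construction of $\alpha$ from Theorem~\ref{thm:stack-equivalence}, together with one more application of the Rees degeneration (Lemma~\ref{lem:rees}), to compare $\FFLAT$ and $\Vect$ after group completion.

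I would begin by applying the McDuff--Segal theorem to the $\Einfty$-monoids $\FFLAT$ and $\Vect$ in presheaves on $\Sch$. Both have $\pi_0 = \N$ on connected test schemes, generated respectively by $[\Spec \sO] \in \FFLAT_1$ and $[\sO] \in \Vect_1$. The theorem identifies the group completions, up to Quillen's plus construction and a trivial $\Z$-factor, with the translation telescopes
\[
\FFLAT_\infty := \colim_d \bigl(\FFLAT_d \xrightarrow{+[\Spec \sO]} \FFLAT_{d+1}\bigr), \qquad \Vect_\infty := \colim_d \bigl(\Vect_d \xrightarrow{+[\sO]} \Vect_{d+1}\bigr).
\]
Since $\eta$ is an $\Einfty$-morphism carrying $[\Spec \sO]$ to $[\sO]$, it induces a map $\eta_\infty \colon \FFLAT_\infty \to \Vect_\infty$, and the theorem reduces to showing that this is an $\A^1$-equivalence.

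The core step uses the interlaced diagram
\[
\Vect_0 \xrightarrow{\alpha} \FFLAT_1 \xrightarrow{\eta} \Vect_1 \xrightarrow{\alpha} \FFLAT_2 \xrightarrow{\eta} \Vect_2 \xrightarrow{\alpha} \cdots
\]
built from the $\alpha_d$'s of Theorem~\ref{thm:stack-equivalence}. The crucial observation is that $\eta \circ \alpha \colon \Vect_{d-1} \to \Vect_d$ equals $\sV \mapsto \sO \oplus \sV$, i.e.\ the transition $+[\sO]$ defining $\Vect_\infty$. Writing $\gamma_d := \alpha_{d+1} \circ \eta_d \colon \FFLAT_d \to \FFLAT_{d+1}$, which sends $\sA$ to the square-zero extension $\sO \oplus \sA$, cofinality of both subsequences implies that the colimit of the interlaced diagram is computed simultaneously by the even-indexed subsequence as $\Vect_\infty$ and by the odd-indexed subsequence as $\FFLAT_\infty^{(\gamma)} := \colim_d(\FFLAT_d, \gamma_d)$; the resulting canonical comparison is the map $\colim_d \eta_d \colon \FFLAT_\infty^{(\gamma)} \to \Vect_\infty$, which is therefore an equivalence of presheaves.

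The final step is to show that $\gamma_d$ and $+[\Spec \sO]$ are $\A^1$-homotopic as maps $\FFLAT_d \to \FFLAT_{d+1}$, via one more application of the Rees construction. Given $\sA \in \FFLAT_d(X)$, equip the product algebra $\sA \times \sO$ with the two-step filtration $(\sA \times \sO)_0 := \sO \cdot (1_\sA, 1_\sO) \subset \sA \times \sO =: (\sA \times \sO)_1$; the $\sO$-linear map $(a, r) \mapsto a - r \cdot 1_\sA$ identifies the graded quotient with $\sA$, so the associated graded algebra is canonically the square-zero extension $\sO \oplus \sA = \gamma(\sA)$. By Lemma~\ref{lem:rees} applied functorially in $\sA$, this produces an $\A^1$-family in $\FFLAT_{d+1}$ interpolating $\sA \times \sO$ at $t=1$ with $\gamma(\sA)$ at $t=0$, yielding an $\A^1$-equivalence $\FFLAT_\infty \simeq \FFLAT_\infty^{(\gamma)}$ compatible with the maps to $\Vect_\infty$. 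Combined with the previous step this gives the $\A^1$-equivalence of $\eta_\infty$ and hence of $\eta^\gp$ by McDuff--Segal. The main obstacle I anticipate is precisely this Rees homotopy: Theorem~\ref{thm:stack-equivalence} does not by itself identify the two natural stabilizations on $\FFLAT$ (disjoint union with a point versus formation of a trivial square-zero extension), and the Rees degeneration is the essential geometric input bridging them; the technical setup of McDuff--Segal in the $\A^1$-local presheaf setting, including Quillen's plus construction, should be essentially formal once this key homotopy is in place.
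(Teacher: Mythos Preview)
Your approach is correct and runs parallel to the paper's, though organized more directly. The paper first reduces to the telescope statement (that $\eta^\st\colon\FFLAT^\st\to\Vect^\st$ is an $\A^1$-equivalence, its Theorem~\ref{thm:main-st}) and proves that by passing through an auxiliary stack $\FFmrk$ of \emph{marked} finite locally free schemes: the marking provides a canonical place at which to glue the one-parameter family $\robber=\Spec\Z[x,t]/(x(x-t))$, yielding an $\A^1$-homotopy between ``add a disjoint point'' and ``add a tangent direction at the marked point''. Your Rees degeneration of $\sA\times\sO$ along the filtration $\sO\cdot(1,1)\subset\sA\times\sO$ achieves the same end without the auxiliary stack; indeed $\robber$ is precisely the Rees algebra of $\sO\times\sO$ for this filtration, so your construction is the natural extension of the paper's to arbitrary $\sA$. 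Your observation that the underlying $\sO[t]$-module of the Rees family is canonically constant, equal to $(\sO\oplus\sA)[t]$, so that $\eta\circ H$ is a constant homotopy, plays exactly the role of the paper's key ``Assertion~(4)''.

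Two points deserve more care than you give them. First, the passage from ``$\gamma_d\simeq_{\A^1}\sigma_d$ for each $d$'' to an $\A^1$-equivalence $\FFLAT_\infty\simeq\FFLAT_\infty^{(\gamma)}$ compatible with the maps to $\Vect_\infty$ requires organizing the Rees homotopies into an equivalence of $\N$-indexed diagrams in the $\A^1$-localization and then identifying the composite with $\eta$ with the strict map $\eta_\infty$; the constancy gives this up to a shear automorphism of $\sO\oplus\sA$, itself $\A^1$-contractible, so this works but is not automatic. The paper faces an analogous bookkeeping issue and ends up invoking the fact that the shift $\tau^\st\colon\Vect^\st\to\Vect^\st$ is an $\A^1$-equivalence (because the cyclic permutation of $\sO^{\oplus 3}$ is $\A^1$-homotopic to the identity); you may find you need a similar ingredient when the dust settles. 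Second, the passage from the telescope to the group completion via McDuff--Segal and the plus construction in the $\A^1$-local presheaf setting is not quite as formal as you suggest; the paper sidesteps this by instead comparing the telescope with the commutative monoid obtained by inverting $\sO$ and using that $\Lhtp$ commutes with group completion.
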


Let $\sigma\colon \FFLAT \to \FFLAT$ be the map that adds a disjoint point over the base, and let $\tau\colon \Vect\to\Vect$ be the map that adds a trivial line bundle.
We denote by $\FFLAT^\st$ the colimit of the sequence
\[
\FFLAT\xrightarrow{\sigma} \FFLAT \xrightarrow{\sigma}\FFLAT\to\dotsb,
\]
 We similarly define $\Vect^\st$ as the colimit of the sequence
 \[
 \Vect\xrightarrow{\tau} \Vect \xrightarrow{\tau}\Vect\to\dotsb.
 \]
There are canonical maps $\FFLAT^\st\to\FFLAT^\gp$ and $\Vect^\st\to\Vect^\gp$. 

Note that there is a commutative square
\[
\begin{tikzcd}
	\FFLAT \ar{r}{\eta} \ar{d}{\sigma} & \Vect \ar{d}{\tau} \\
	\FFLAT \ar{r}{\eta} & \Vect\rlap,
\end{tikzcd}
\]
inducing a map $\eta^\st\colon \FFLAT^\st\to \Vect^\st$ in the colimit.
We shall deduce Theorem~\ref{thm:main-gp} from the following variant, which does not involve group completion:

\begin{thm}\label{thm:main-st}
	The map $\eta^\st\colon \FFLAT^\st\to \Vect^\st$ is an $\A^1$-equivalence.
\end{thm}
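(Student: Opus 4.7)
The strategy is to introduce a map $\alpha\colon\Vect\to\FFLAT$ sending an $\sO_X$-module $\sV$ to the square-zero extension $\sO_X\oplus\sV$ (extending $\alpha\colon\Vect_{d-1}\to\FFLAT_d$ from \secref{sec:FFLATtoVect}), and show that it becomes an $\A^1$-inverse of $\eta$ after stabilization.

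I will establish two compatibility relations. First, $\eta\circ\alpha=\tau$ holds strictly, since forgetting the algebra structure on $\sO_X\oplus\sV$ yields the module $\sO_X\oplus\sV=\tau(\sV)$. Second, and the substantive step, $\alpha\circ\eta\simeq_{\A^1}\sigma$ as natural transformations $\FFLAT\to\FFLAT$. To prove this, given a finite locally free $\sO_X$-algebra $\sA$, I equip $\sigma(\sA)=\sA\times\sO_X$ with the two-step filtration
\[
F_0=\sO_X\cdot(1_\sA,1_{\sO_X})\subset F_1=\sA\times\sO_X.
\]
A direct computation identifies the associated graded algebra with $\sO_X\oplus\sA$ (with $\sA$ a square-zero ideal), which equals $\alpha(\eta(\sA))$. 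By \lemref{lem:rees}, the Rees algebra $\Rees(\sA\times\sO_X)$ is a finite locally free $\sO_X[t]$-algebra interpolating between $\sigma(\sA)$ at $t=1$ and $\alpha\eta(\sA)$ at $t=0$; functoriality in $\sA$ assembles this into the required natural $\A^1$-homotopy $\A^1\times\FFLAT\to\FFLAT$.

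The theorem then follows by a telescope argument. The strict equality $\eta\sigma=\tau\eta$ (direct check: $\eta(\sA\times\sO_X)=\sA\oplus\sO_X=\tau(\eta(\sA))$) induces $\eta^\st\colon\FFLAT^\st\to\Vect^\st$, while the chain $\alpha\tau=\alpha\eta\alpha\simeq_{\A^1}\sigma\alpha$ produces an $\A^1$-homotopy coherent map $\alpha^\st\colon\Vect^\st\to\FFLAT^\st$. The composites $\alpha^\st\circ\eta^\st$ and $\eta^\st\circ\alpha^\st$ are identified with the self-maps of $\FFLAT^\st$ and $\Vect^\st$ obtained by applying $\sigma$ and $\tau$ level-wise; these are the canonical shift self-equivalences of sequential colimits. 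Hence both composites are $\A^1$-equivalences, and a standard two-sided-inverse argument forces $\eta^\st$ to be an $\A^1$-equivalence.

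I expect the main difficulty to lie in the Rees-algebra identification $\alpha\eta\simeq_{\A^1}\sigma$: one must locate a two-step filtration on $\sA\times\sO_X$ whose associated graded equals $\alpha\eta(\sA)$ on the nose. The filtration above, which concentrates the unit on the diagonal $(1_\sA,1_{\sO_X})$, is the natural choice and parallels the filtration used in the proof of \thmref{thm:stack-equivalence}. Once this natural $\A^1$-homotopy is in hand, the remainder of the proof is formal and circumvents the singularities of $\FFLAT$.
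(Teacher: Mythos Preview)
Your Rees-algebra homotopy $\alpha\eta\simeq_{\A^1}\sigma$ is correct and elegant: the filtration $\sO\cdot(1_\sA,1)\subset\sA\times\sO$ does have associated graded $\sO\oplus\sA$ with $\sA$ a square-zero ideal, and \lemref{lem:rees} applies. This is a genuine simplification over the paper's route, which introduces auxiliary stacks $\FFmrk$ (marked finite flat schemes) and $\FFnu$ (nonunital algebras) and a different degeneration $H\colon\varepsilon\rightsquigarrow\sigma^\mrk$ to achieve essentially the same effect.

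The telescope step, however, has a gap. You write ``$\eta(\sA\times\sO_X)=\sA\oplus\sO_X=\tau(\eta(\sA))$'', but $\tau$ adds $\sO$ as the \emph{first} summand, so $\tau\eta(\sA)=\sO\oplus\sA$; the identification $\eta\sigma\simeq\tau\eta$ is the swap isomorphism, not a strict equality. Tracing this through, the composite $\eta^\st\alpha^\st$ is not the canonical shift self-equivalence of the sequential colimit (level-wise $\tau$ with identity coherence), but rather level-wise $\tau$ with the swap $\tau\tau\stackrel{\mathrm{sw}}{\simeq}\tau\tau$ as coherence datum --- in other words, the \emph{action} of $\sO\in\Vect$ on the $\Vect$-module $\Vect^\st$. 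The paper runs into exactly this obstacle and remarks explicitly that the resulting $\tau^\st$ ``is \emph{not} an equivalence'', only an $\A^1$-equivalence; it then invokes \cite[Proposition~5.1]{deloop4} together with the fact that the cyclic permutation of $\sO^{\oplus 3}$ is $\A^1$-homotopic to the identity. Your argument needs the same external input, plus the verification (analogous to the paper's assertion~(4)) that $\eta$ applied to your Rees homotopy is constant once endpoints are identified via the swap. With those additions your approach goes through and is arguably cleaner than the paper's.
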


To prove Theorem~\ref{thm:main-st}, we consider the algebraic stack $\FFmrk$ of finite locally free schemes with a distinguished
point. More precisely, for every scheme $X$, the groupoid $\FFmrk(X)$ is given by
\[
    \FFmrk(X) = \left\{ (f, s)\ |\ f\colon
    Z\to X \mbox{ finite locally free, } s\colon X\to Z\mbox{ a
    section of } f\right\}.
\]
We will refer to the section $s$ as the \emph{marking} and to such pairs $(f,s)$ as
\emph{marked} schemes.
There is a forgetful map $\theta \colon \FFmrk \to \FFLAT$ that discards the marking (which is the universal finite locally free family).

Let $\FFnu$ be the stack of finite locally free sheaves of nonunital commutative algebras.
There is an equivalence
\[
\FFnu \simeq \FFmrk
\]
sending a nonunital algebra $\sA$ to its unitalization $\sO\oplus\sA$, with inverse sending an augmented algebra to its augmentation ideal. 
Let $\nu\colon \Vect\to\FFnu$ be the functor sending a finite locally free sheaf $\sV$ to $\sV$ regarded as a nonunital algebra with zero multiplication.

The following table summarizes the various maps we will use in the proof of Theorem~\ref{thm:main-st}, some of which are introduced below:

\begin{center}
  \begin{tabular}{l l}
  name & description\\
    \toprule
	 $\eta\colon \FFLAT \to \Vect$ & forgets the algebra structure\\
	 $\theta\colon \FFmrk \to \FFLAT$ & forgets the marking\\
        $\tau\colon\Vect \to \Vect$ & adds a trivial line bundle\\
    $\sigma\colon\FFLAT \to \FFLAT$ & adds a disjoint point\\
$\sigma^\mrk\colon \FFmrk\to \FFmrk$ & adds a disjoint point without changing the marking\\
$\varepsilon\colon\FFmrk \to \FFmrk$ & adds a new tangent direction at the
    marked point \\
      $\nu\colon\Vect \to \FFLAT^\mathrm{nu}$ & equips a module with zero multiplication\\
		$\alpha\colon \Vect \to \FFLAT$ & forms the trivial square-zero extension\\
    $\pi\colon \FFLAT^\mathrm{nu} \to \Vect$ & forgets the algebra structure
\end{tabular}
\end{center}
\vskip\parskip

\begin{prop}\label{prop:vect-nu}
	The map $\nu\colon\Vect\to \FFnu$ is an $\A^1$-equivalence.
\end{prop}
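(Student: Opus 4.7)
The plan is to exhibit $\pi\colon \FFnu\to \Vect$ (the evident forgetful map from the table above) as an $\A^1$-homotopy inverse to $\nu$. One has $\pi\nu = \id_{\Vect}$ on the nose, so it will suffice to construct an $\A^1$-homotopy between $\id_{\FFnu}$ and $\nu\pi$.

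The key idea is to linearly deform the multiplication to zero. Given a finite locally free nonunital commutative $R$-algebra $A$, I would define an $R[t]$-algebra $A^\Rees$ whose underlying $R[t]$-module is $A\otimes_R R[t]$ and whose multiplication is the $R[t]$-bilinear extension of
\[
a\cdot_t b := t\,(ab),\qquad a,b\in A.
\]
Commutativity and associativity are inherited from $A$, and $A^\Rees$ is tautologically finite locally free of rank $\rk_R(A)$ over $R[t]$. The construction is functorial in $A$ and compatible with base change in $R$, so it assembles into a morphism $h\colon \A^1\times \FFnu\to \FFnu$.

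At $t=1$ the rescaled product is the original product on $A$, so $h|_{t=1}=\id_{\FFnu}$; at $t=0$ all products vanish and $A^\Rees/(t)$ is the underlying module of $A$ equipped with zero multiplication, so $h|_{t=0}=\nu\pi$. This is the required $\A^1$-homotopy, completing the proof.

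I do not foresee any real obstacle. The analogue of Lemma~\ref{lem:rees} for nonunital algebras filtered by $0\subset A$ with $A$ placed in degree one is essentially trivial, since the constraint $A_i\cdot A_j\subset A_{i+j}$ is vacuous in degree $\geq 2$ and no flatness argument for associated graded pieces is needed. The construction above is the resulting Rees algebra written out explicitly; it is, in fact, strictly simpler than the homotopy used in the proof of \thmref{thm:stack-equivalence} precisely because the absence of a unit frees us from the constraint $1\in A_0$.
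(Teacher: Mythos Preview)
Your proof is correct and essentially identical to the paper's: the paper writes the homotopy as $\sA\mapsto t\sA[t]$, which under the $R[t]$-module isomorphism $\sA[t]\xrightarrow{\cdot t} t\sA[t]$ is exactly your rescaled multiplication $a\cdot_t b = t(ab)$. Your Rees-algebra description is a slightly more explicit way of writing the same construction.
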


\begin{proof}
	Let $\pi\colon \FFnu\to \Vect$ be the forgetful functor. Then $\pi\circ\nu$ is the identity, and the map
	\[
	\FFnu \to \Maps(\A^1,\FFnu), \quad \sA \mapsto t\sA[t]
	\]
	is an $\A^1$-homotopy from $\nu\circ\pi$ to the identity of $\FFnu$.
\end{proof}

\begin{prop}\label{prop:bank-robbery}
	The map $\theta\colon \FFLAT^{\mrk}\to \FFLAT_{\geq 1}$ is an $\A^1$-equivalence.
\end{prop}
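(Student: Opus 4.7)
The plan is to reduce the statement to Theorem~\ref{thm:stack-equivalence} and Proposition~\ref{prop:vect-nu} via a two-out-of-three argument. The key observation is that the square-zero extension functor $\alpha\colon \Vect \to \FFLAT_{\geq 1}$ admits a canonical lift $\tilde\alpha\colon \Vect \to \FFmrk$: for a vector bundle $\sV$ on a scheme $X$, the algebra $\sO_X \oplus \sV$ carries a tautological augmentation $\sO_X \oplus \sV \to \sO_X$ projecting onto the first summand, providing a canonical marking. By construction one has $\theta \circ \tilde\alpha = \alpha$.

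The next step is to identify $\tilde\alpha$, under the equivalence $\FFmrk \simeq \FFnu$ that sends an augmented algebra to its augmentation ideal, with the map $\nu\colon \Vect \to \FFnu$. This is immediate from the definitions: the augmentation ideal of the square-zero extension $\sO_X \oplus \sV$ is $\sV$ equipped with zero multiplication, which is precisely $\nu(\sV)$.

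With these two observations in place, the conclusion follows by two-out-of-three. Remark~\ref{rem:stack-equivalence} asserts that $\alpha$ is an $\A^1$-equivalence, and Proposition~\ref{prop:vect-nu} combined with the identification above shows that $\tilde\alpha$ is an $\A^1$-equivalence; hence so is $\theta$.

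Since the argument is this direct, I do not expect a substantive obstacle. The only care needed is in the rank bookkeeping—a marked finite locally free scheme automatically has rank $\geq 1$ at every point, so $\theta$ indeed targets $\FFLAT_{\geq 1}$ and matches the target of $\alpha$—and in checking that the equivalence $\FFmrk \simeq \FFnu$ intertwines $\tilde\alpha$ with $\nu$, which amounts to unraveling definitions. An alternative, more direct route would be to construct an explicit $\A^1$-deformation of a marking on a given finite locally free scheme (using the Rees-type trick underlying Theorem~\ref{thm:stack-equivalence} or the auxiliary functors $\varepsilon$, $\sigma^\mrk$ introduced in the table), but the factorization through $\Vect$ seems markedly cleaner.
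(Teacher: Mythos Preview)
Your proof is correct and essentially identical to the paper's: the paper draws the commutative square with vertices $\Vect$, $\FFLAT_{\geq 1}$, $\FFnu$, $\FFmrk$ and edges $\alpha$, $\nu$, $\theta$, and the unitalization equivalence, then invokes Remark~\ref{rem:stack-equivalence}, Proposition~\ref{prop:vect-nu}, and two-out-of-three. Your map $\tilde\alpha$ is precisely the composite $\Vect\xrightarrow{\nu}\FFnu\simeq\FFmrk$, so the arguments coincide.
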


\begin{proof}
	We have a commutative square
	\[
	\begin{tikzcd}
	    \Vect \ar["\alpha"]{r} \ar[swap,"\nu"]{d} & \FFLAT_{\geq 1}  \\
		\FFnu \ar["\simeq"]{r} & \FFLAT^\mrk \rlap. \ar[swap,"\theta"]{u}
	\end{tikzcd}
	\]
	The map $\alpha$ is an $\A^1$-equivalence by Remark~\ref{rem:stack-equivalence} and the map $\nu$ is an $\A^1$-equivalence by Proposition~\ref{prop:vect-nu}. By the 2-out-of-3 property, $\theta$ is an $\A^1$-equivalence.
\end{proof}

Let $\sigma^\mrk\colon \FFmrk\to \FFmrk$ be the map that adds a
disjoint point over the base without changing the marking. 
We then have a commutative square
\[
\begin{tikzcd}
    \FFmrk \ar["\theta"]{r} \ar["\sigma^\mrk"]{d} & \FFLAT \ar["\sigma"]{d}  \\
	\FFmrk \ar["\theta"]{r} & \FFLAT\rlap,
\end{tikzcd}
\]
inducing in the colimit a map
\[
\theta^\st\colon \FFLAT^{\mrk,\st}\to \FFLAT^\st.
\]

\begin{cor}\label{cor:theta-st}
	The map $\theta^\st\colon \FFLAT^{\mrk,\st}\to \FFLAT^\st$ is an $\A^1$-equivalence.
\end{cor}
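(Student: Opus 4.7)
The plan is to bootstrap from \propref{prop:bank-robbery}, which only gives an $\A^1$-equivalence onto $\FFLAT_{\geq 1}$, by showing that after $\sigma$-stabilization the missing component $\FFLAT_0$ becomes invisible.

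The key geometric input is twofold: the map $\theta$ factors through $\FFLAT_{\geq 1}\hookrightarrow\FFLAT$, since a marked scheme must be non-empty; and $\sigma$ strictly raises degree, so $\sigma\colon\FFLAT\to\FFLAT$ also factors through $\FFLAT_{\geq 1}$. Consequently the commutative square defining $\theta^\st$ fits into
\[
\begin{tikzcd}
    \FFmrk \ar{r}{\theta} \ar{d}{\sigma^\mrk} & \FFLAT_{\geq 1} \ar{d}{\sigma} \ar[hook]{r} & \FFLAT \ar{d}{\sigma} \\
    \FFmrk \ar{r}{\theta} & \FFLAT_{\geq 1} \ar[hook]{r} & \FFLAT\rlap,
\end{tikzcd}
\]
and taking colimits along the vertical arrows produces a factorization
$\FFLAT^{\mrk,\st}\xrightarrow{\varphi}\colim_{\sigma}\FFLAT_{\geq 1}\xrightarrow{\psi}\FFLAT^\st$.

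For $\varphi$, \propref{prop:bank-robbery} yields an $\A^1$-equivalence on each floor, and since $\Lhtp$ is a left adjoint it commutes with sequential colimits, so $\varphi$ is an $\A^1$-equivalence. For $\psi$, I expect this to be an isomorphism already in $\Pre(\Sch)$, by cofinality. Splicing together the columns of the right-hand square into an $\omega$-shaped diagram
\[
\FFLAT \xrightarrow{\sigma} \FFLAT_{\geq 1} \hookrightarrow \FFLAT \xrightarrow{\sigma} \FFLAT_{\geq 1} \hookrightarrow \FFLAT \to \cdots,
\]
both the $\FFLAT$-terms and the $\FFLAT_{\geq 1}$-terms form cofinal subsequences, and in either case the composite of two consecutive arrows equals $\sigma$. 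So the two cofinal subdiagrams compute $\FFLAT^\st$ and $\colim_\sigma\FFLAT_{\geq 1}$ respectively, the common colimit identifies them, and the comparison map realizing this identification is precisely $\psi$.

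There is no real obstacle here; the argument is formal given \propref{prop:bank-robbery} and a little care with the degree bookkeeping that makes the factorization through $\FFLAT_{\geq 1}$ possible.
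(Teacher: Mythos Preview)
Your proof is correct and follows the same approach as the paper, which states tersely that the result ``follows immediately from Proposition~\ref{prop:bank-robbery} and the observation that the inclusion $\FFLAT_{\geq 1}\subset\FFLAT$ becomes an equivalence after stabilization along $\sigma$.'' You have simply unpacked both halves of that sentence: the factorization of $\theta^\st$ through $\colim_\sigma\FFLAT_{\geq 1}$ and the cofinality/interleaving argument showing $\psi$ is an equivalence already before $\A^1$-localization.
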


\begin{proof}
	This follows immediately from Proposition~\ref{prop:bank-robbery} and the observation that the inclusion $\FFLAT_{\geq 1} \subset \FFLAT$ becomes an equivalence after stabilization along $\sigma$.
\end{proof}

\begin{lem}\label{lem:fflat-pushout}
	Let $S$ be a scheme and let $Z\to X$ and $Z\to Y$ be morphisms between finite locally free $S$-schemes with $Z\to X$ a closed immersion. Then the pushout $X\sqcup_ZY$ in the category of schemes exists and is finite locally free over $S$.
\end{lem}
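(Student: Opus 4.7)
The claim is Zariski-local on $S$, so I would first reduce to the case where $S=\Spec R$ is affine. Then $X=\Spec A$, $Y=\Spec B$, $Z=\Spec C$ are all affine with $A,B,C$ finite locally free over $R$, and the closed immersion $Z\to X$ corresponds to a surjective $R$-algebra map $A\twoheadrightarrow C$ with kernel $I$. The natural candidate for the pushout is $\Spec D$, where $D:=A\times_C B$ is the fiber product of $R$-algebras.

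The first step, and the main obstacle, is to verify that $\Spec D$ represents the pushout $X\sqcup_Z Y$ in the category of all schemes, not merely the subcategory of affine schemes. For affine targets this is immediate from the universal property of ring fiber products. For a general target, the essential input is Ferrand's theorem on pushouts along closed immersions: since $Z\to X$ is a closed immersion and $Z\to Y$ is affine, the pushout exists in schemes and is given, locally over $S$, by $\Spec$ of the ring-theoretic fiber product. I would cite this result rather than reprove it. (One also needs to check that the construction is compatible with localization on $R$, which is immediate from flatness of localization, so that the affine-local construction glues to a global $S$-scheme.)

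The second step is to check that $D$ is finite locally free over $R$. Since $C$ is finite locally free, it is $R$-projective, so the surjection $A\twoheadrightarrow C$ admits an $R$-linear section $s\colon C\to A$. In particular $I$ is a direct summand of $A$, hence itself finite locally free over $R$. Consider then the short exact sequence of $R$-modules
\[
0 \to I \longrightarrow D \xrightarrow{(a,b)\mapsto b} B \to 0,
\]
where $I\hookrightarrow D$ sends $i$ to $(i,0)$. The $R$-linear map $B\to D$, $b\mapsto(s(\bar b),b)$ with $\bar b\in C$ the image of $b$, is a well-defined section of the projection. Hence $D\cong I\oplus B$ as $R$-modules, which is finite locally free over $R$.

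In summary, the existence and concrete description of the pushout in schemes is the content I would defer to Ferrand's theorem, while the finite-local-freeness of $D$ reduces to a short module-theoretic calculation exploiting that $C$ is $R$-projective.
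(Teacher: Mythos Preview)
Your proof is correct and follows essentially the same route as the paper's: cite a standard reference (the paper uses \cite[Tag 0E25]{stacks}, you invoke Ferrand) for the existence of the scheme-theoretic pushout as $\Spec(A\times_C B)$, then observe that $A\times_C B$ sits in a short exact sequence $0\to I\to A\times_C B\to B\to 0$ with $I=\ker(A\to C)$ and $B$ both finite locally free, whence $A\times_C B$ is finite locally free. The paper's version is more compressed (it simply notes the fiber product is an extension of locally free modules), but the content is identical.
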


\begin{proof}
    The pushout exists and is affine over $S$ by~\cite[Tag 0E25]{stacks}. Now,
    local freeness translates into an algebraic statement: if $M\to P$ and
    $N\to P$ are morphisms of finite locally free modules with $M\to P$
    surjective, then $M\times_PN$ is finite locally free. This is true, since
    the module
    $M\times_P N$ is an extension of the locally free module $\ker(M\to P)$ by
    the locally free module $N$.
\end{proof}

\begin{proof}[Proof of Theorem~\ref{thm:main-st}]
	Consider the finite locally free $\Z[t]$-scheme $\robber = \Spec \Z[x,
	t]/((x-t)x)$ whose fibers over $0$ and $1$ are the schemes $\robbernot = \Spec\Z[x]/(x^2)$ and $\robber_1 = \Spec\Z\sqcup\Spec \Z$.
	There is a section $i\colon \Spec \Z[t] \to \robber$ sending $x$ to $0$,
	which defines an element $(\robber, i)$ in
	$\FFmrk(\Z[t])$.

	Let $\varepsilon \colon \FFmrk \to \FFmrk$ be the map that sends
	$(f\colon Z\to S, s\colon S\to Z)$ to $Z\sqcup_{S} (\robbernot)_S$ with
	section the canonical map $S\to Z \sqcup_S (\robbernot)_S$. The map $\varepsilon$ is well-defined by Lemma~\ref{lem:fflat-pushout}. In plain words, the transformation $\varepsilon$ adds a new tangent direction at the
	marked point of the finite scheme.
	Similarly, let $H\colon \FFmrk\to \Maps(\A^1, \FFmrk)$ be the map that
	sends $(f\colon Z\to S, s\colon S\to Z)$ to the pushout $\A^1_Z\sqcup_{\A^1_S} \robber_S$.
	It is then clear that $H_0=\varepsilon$ and $H_1 = \sigma^\mrk$, i.e., $H$ is an $\A^1$-homotopy from $\varepsilon$ to $\sigma^\mrk$.
	
	We now have commutative squares
	\[
	\begin{tikzcd}
	    \Vect \ar["\nu"]{r} \ar["\tau"]{d} & \FFnu \simeq \FFmrk \ar["\varepsilon",d,shift left=2em] & \FFmrk \ar["\theta"]{r} \ar["\sigma^\mrk"]{d} & \FFLAT 
	    \ar["\sigma"]{d}  \ar["\eta"]{r} & \Vect \ar["\tau"]{d} \\
		\Vect \ar["\nu"]{r} & \FFnu \simeq \FFmrk & \FFmrk \ar["\theta"]{r} & \FFLAT \ar["\eta"]{r} & \Vect
	\end{tikzcd}
	\]
	with the following properties:
	\begin{enumerate}
		\item $\nu\colon \Vect\to \FFnu$ is an $\A^1$-equivalence, by Proposition~\ref{prop:vect-nu};
		\item there is an $\A^1$-homotopy $H\colon \varepsilon \rightsquigarrow \sigma^\mrk$;
		\item $\theta^\st\colon \FFLAT^{\mrk,\st} \to \FFLAT^\st$ is an $\A^1$-equivalence, by Corollary~\ref{cor:theta-st};
		\item the $\A^1$-homotopy $\eta\circ \theta\circ H\circ \nu$ is constant if we identify its endpoints using the isomorphism
		\[
		\eta\circ \theta\circ\nu\circ\tau
		\simeq
		\tau\circ\tau\stackrel{\mathrm{swap}}\simeq\tau\circ\tau
		\simeq 
		\tau\circ \eta\circ \theta\circ \nu.
		\]
	\end{enumerate}
	Assertion (4) is a straightforward verification from the definition of the $\A^1$-homotopy $H$.
	Assertion (2) allows us to define a map $\nu^\st\colon \Lhtp\Vect^\st \to \Lhtp\FFLAT^{\mrk,\st}$, which is an equivalence by (1). Assertion (4) implies that $\Lhtp(\eta^\st\circ\theta^\st)\circ\nu^\st\simeq \Lhtp\tau^\st$, where $\tau^\st\colon \Vect^\st\to\Vect^\st$ is the action of $\sO\in\Vect$ on the $\Vect$-module $\Vect^\st$. Note that $\tau^\st$ is \emph{not} an equivalence, but it is an $\A^1$-equivalence by \cite[Proposition 5.1]{deloop4} since the cyclic permutation of $\sO^3$ is $\A^1$-homotopic to the identity. From this and (3) we deduce that $\eta^\st$ is an $\A^1$-equivalence.
\end{proof}

\begin{proof}[Proof of Theorem~\ref{thm:main-gp}]
	Let $\FFLAT[-1]$ and $\Vect[-1]$ be the commutative monoids obtained from $\FFLAT$ and $\Vect$ by inverting $\sO$. We have a commutative square
	\[
	\begin{tikzcd}
		\FFLAT^\st \ar{r} \ar{d} & \FFLAT[-1] \ar{d} \\
		\Vect^\st \ar{r} & \Vect[-1]\rlap.
	\end{tikzcd}
	\]
	The cyclic permutation of $\sO^{\oplus 3}$ becomes homotopic to the identity in $\Lhtp\Vect$, since it is a product of elementary matrices. It thus follows from \cite[Proposition 5.1]{deloop4} that the lower horizontal map is an $\A^1$-equivalence.
	By Theorem~\ref{thm:main-st}, the left vertical map is an $\A^1$-equivalence. It then follows from \cite[Proposition 5.1]{deloop4} that the upper horizontal map is also an $\A^1$-equivalence. Hence, the right vertical map is an $\A^1$-equivalence.
	 Since the functor $\Lhtp$ commutes with group completion \cite[Lemma 5.5]{HoyoisCdh}, we deduce that $\FFLAT^\gp\to\Vect^\gp$ is an $\A^1$-equivalence.
\end{proof}
 
 \begin{rem}
 	It follows from Theorem~\ref{thm:main-st} and \cite[Proposition 5.1]{deloop4} that the cyclic permutation of three points in $\FFLAT$ becomes $\A^1$-homotopic to the identity in $\FFLAT^\st$. In fact, one can directly show that it is $\A^1$-homotopic to the identity in $\FFLAT$ as follows. Let $X=\Z[x,y,z]/(xy,xz,yz)$ be three affine lines glued together at the origin. There is a finite flat morphism $X\to\A^1=\Spec \Z[t]$ given by $t\mapsto x+y+z$, which is an $\A^1$-homotopy between three points and the finite flat $\Z$-scheme $X_0=\Spec \Z[x,y,z]/(xy,xz,yz,x+y+z)$. The cyclic group $C_3$ acts on $X/\A^1$ by permuting the three lines, so it remains to show that the induced automorphism $c$ of $X_0$ is $\A^1$-homotopic to the identity. 
Note that $X_0$ is a square-zero extension $\Spec \Z\oplus V$ where $V=\Z\{x,y,z\}/\Z\{x+y+z\}$. Hence the group $\GL(V)$ acts on $X_0$, and the matrix of $c$ in the basis $(x,y)$ of $V$ is
\(\left(
\begin{smallmatrix}
	0 & -1 \\
	1 & -1
\end{smallmatrix}
\right)\),
which has determinant $1$. We conclude using the fact that $(\Lhtp\SL_2)(\Z)$ is connected, since $\SL_2(\Z)$ is generated by elementary matrices of row additions.
 \end{rem}

\section{Consequences for the Hilbert scheme of $\A^\infty$} \label{sec: A^infty times X}

Fix a base scheme $S$. For a separated $S$-scheme $X$ we write $\Hilb(\A^\infty_X/S)$ for $\colim_n \Hilb(\A^n_X/S)$, where the colimit is taken in $\Pre(\Sch_S)$. 
For a set $E$, we also consider $\Hilb(\A^E_X/S)$, the Hilbert scheme of points of the scheme $\A^E_X = \Spec \Z[x_e\ |\ e\in E] \times X$ over $S$.

    Let us recall a useful lemma from~\cite{deloop1}. A presheaf $\mathcal{F}\colon\Sch_S^\op\to \Spc$ \emph{satisfies closed gluing}~\cite[Definition~A.2.1]{deloop1} if it sends the empty scheme to the point and it sends pushouts of closed embeddings to pullbacks. Presheaves parametrizing embeddings often satisfy closed gluing.
	 
	 We shall say that a morphism $f$ in $\Pre(\Sch_S)$ is a \emph{universal $\A^1$-equivalence on affine schemes} if any base change of $f$ in $\Pre(\Sch_S)$ is an $\A^1$-equivalence on affine schemes.

    \begin{lem}\label{lem:deloop1_criterion_for_A1equiv}
		\leavevmode
     \begin{enumerate}
     \item Let $\mathcal{F}\colon\Sch_S^\op\to \Spc$ be a presheaf of spaces that satisfies closed gluing and assume that for every affine scheme $\Spec A$ over $S$ and finitely generated ideal $I\subset A$ the morphism
     \[\mathcal{F}(\Spec A)\to \mathcal{F}(\Spec A/I)\]
     is surjective on $\pi_0$. Then $\mathcal{F}$ is $\A^1$-contractible on affine schemes.
     
     \item Let $f\colon\mathcal{F}\to \mathcal{G}$ be a map of presheaves of spaces on $\Sch_S$ such that for every $T\in\Sch_S$ the fiber over every $T$-point $T\to \mathcal{G}$ satisfies closed gluing (as a presheaf on $\Sch_T$). Suppose that for every affine scheme $\Spec A$ over $S$ and every finitely generated ideal $I\subset A$ the morphism
     \[\mathcal{F}(\Spec A)\to \mathcal{F}(\Spec A/I)\times_{\mathcal{G}(\Spec A/I)}\mathcal{G}(\Spec A)\]
     is surjective on $\pi_0$. Then $f$ is a universal $\A^1$-equivalence on affine schemes.
     \end{enumerate}
    \end{lem}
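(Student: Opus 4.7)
The plan is to prove part (1) directly and then deduce part (2) by a fiberwise reduction, which is where the closed gluing hypothesis on the fibers enters.

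For part (1), the goal is to show that for every affine $S$-scheme $\Spec A$ the simplicial space $\mathcal{F}(\Spec A\times\Delta^\bullet_{alg})$ has contractible geometric realization, where $\Delta^n_{alg}=\Spec\Z[t_0,\ldots,t_n]/(\sum t_i-1)\cong\A^n$. The first step is to upgrade the hypothesis into a stronger form: for any closed subscheme $Z\subset\Spec B$ of an affine scheme which is the finite union of affine closed subschemes $Z_1,\ldots,Z_k$ (cut out by finitely generated ideals, with all pairwise intersections taken along finitely generated ideals), the map $\mathcal{F}(\Spec B)\to \mathcal{F}(Z)$ is surjective on $\pi_0$. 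Closed gluing identifies $\mathcal{F}(Z)$ with the iterated fiber product of the $\mathcal{F}(Z_i)$ along the $\mathcal{F}(Z_i\cap Z_j)$, so one can inductively lift a compatible family on the $Z_i$ to $\mathcal{F}(\Spec B)$ by applying the one-ideal surjectivity at each step.

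Next I would apply this to the closed embeddings of the algebraic horns $\Lambda^n_k\hookrightarrow\Delta^n_{alg}$: these are finite unions of coordinate faces whose intersections are again algebraic subsimplices, so the combined hypothesis gives a $\pi_0$-surjection
\[
\mathcal{F}(\Spec A\times \Delta^n_{alg})\twoheadrightarrow \mathcal{F}(\Spec A\times \Lambda^n_k).
\]
Closed gluing identifies the right-hand side with the matching object for the horn in the simplicial space $n\mapsto\mathcal{F}(\Spec A\times\Delta^n_{alg})$. The combinatorial heart of the proof is then to leverage this horn-filling at the level of $\pi_0$, iterated up the Postnikov tower of $\mathcal{F}(\Spec A\times\Delta^n_{alg})$, to construct an extra degeneracy witnessing that the simplicial space is contractible; equivalently, one produces an explicit $\A^1$-homotopy retraction onto the $0$-simplices, and a further contraction of the $0$-simplices using the $t(t-1)$ trick applied to $\Spec A[t]$ (where closed gluing on $\Spec A\sqcup \Spec A \hookrightarrow\Spec A[t]$ combined with surjectivity produces an $\A^1$-homotopy between any two sections). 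Note that nonemptiness is automatic: taking $I=A$ and using that $\mathcal{F}(\emptyset)=\ast$ by closed gluing, the surjectivity forces $\mathcal{F}(\Spec A)\to\ast$ to be surjective on $\pi_0$.

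For part (2), fix $T\in\Sch_S$ and a $T$-point $g\colon T\to \mathcal{G}$, and form the fiber $\mathcal{F}_g=\mathcal{F}\times_{\mathcal{G}}T$ as a presheaf on $\Sch_T$. By hypothesis $\mathcal{F}_g$ satisfies closed gluing. For any affine $T$-scheme $\Spec A$ and finitely generated ideal $I\subset A$, the relative surjectivity hypothesis applied to $\Spec A\to T\to\mathcal{G}$ says exactly that any $x'\in\mathcal{F}(\Spec A/I)$ compatible with $g|_{\Spec A}\in\mathcal{G}(\Spec A)$ lifts to some $x\in\mathcal{F}(\Spec A)$ living over $g|_{\Spec A}$. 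This is precisely the condition that $\mathcal{F}_g(\Spec A)\to\mathcal{F}_g(\Spec A/I)$ is $\pi_0$-surjective. Part (1) then gives that $\mathcal{F}_g$ is $\A^1$-contractible on affine $T$-schemes. Since every base change of $f$ along any map $\mathcal{H}\to\mathcal{G}$ has, at each affine $T$-point, the same fibers as $f$, this shows $f$ is a universal $\A^1$-equivalence on affine schemes.

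The main obstacle is the contractibility argument in part (1): turning $\pi_0$-level horn filling into genuine contractibility of the simplicial space requires a careful bookkeeping of the compatibility of lifts across all simplicial dimensions. The rest of the argument is comparatively formal, once one sets up the iterated use of closed gluing correctly.
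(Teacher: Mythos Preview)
Your overall strategy for part (1) is the same as the paper's: use closed gluing to identify $\mathcal{F}$ evaluated on an algebraic ``subcomplex'' of $\Delta^n_A$ with the corresponding simplicial matching object, then feed in the $\pi_0$-surjectivity hypothesis. The gap is your choice of subcomplex. You use horns $\Lambda^n_k$, but horn-filling (even at the $\pi_0$ level for all $n,k$) does not imply contractibility of the realization of a simplicial space---any Kan complex has horn fillers. Your subsequent attempts to patch this (Postnikov towers, an extra degeneracy, the $t(t-1)$ trick) only address $\pi_0$ of the realization, not higher homotopy groups, and remain vague precisely at the point where the argument needs substance.

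The fix is to replace $\Lambda^n_k$ by the full boundary $\partial\Delta^n$. Your closed-gluing induction on unions of affine closed subschemes applies verbatim to $\partial\Delta^n_A$ (the union of $n+1$ facets, each an algebraic simplex, with all intersections again simplices), yielding $\pi_0$-surjectivity of
\[
\mathcal{F}(\Delta^n_A)\longrightarrow \mathcal{F}(\partial\Delta^n_A)\simeq \Maps(\partial\Delta^n,\mathcal{F}(\Delta^\bullet_A)).
\]
This is exactly the hypothesis of the standard contractibility criterion \cite[Lemma~A.5.3.7]{SAG}: a simplicial space whose restriction maps to the $\partial\Delta^n$-matching objects are all $\pi_0$-surjective has contractible realization. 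The paper cites \cite[Lemma~A.2.6]{deloop1} for the closed-gluing identification and then that lemma from \cite{SAG} to conclude; no Postnikov or extra-degeneracy bookkeeping is needed.

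Your argument for part (2) is correct and is what the paper means by ``follows from the first using the universality of colimits.''
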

	 
    \begin{proof}
     We will use the same strategy as in the proof of~\cite[Lemma~2.3.22]{deloop1}. Recall that $(\Lhtp\mathcal{F})(\Spec A)$ is the geometric realization of the simplicial space
     \[\textstyle[n]\mapsto \mathcal{F}(\Delta^n_A)=\mathcal{F}\left(\Spec A[t_0,\dots,t_n]/\left(\sum_i t_i - 1\right)\right).\]
     Since $\mathcal{F}$ satisfies closed gluing, \cite[Lemma~A.2.6]{deloop1} implies that
     \[\Maps(\partial \Delta^n,\mathcal{F}(\Delta^\bullet_A))\simeq \mathcal{F}(\partial\Delta^n_A)\]
     where $\partial\Delta^n_A$ is the zero locus of $t_0\cdots t_n$ in $\Delta^n_A$. In particular the map
     \[\Maps(\Delta^n,\mathcal{F}(\Delta^\bullet_A))\to \Maps(\partial\Delta^n,\mathcal{F}(\Delta^\bullet_A))\]
     is surjective on $\pi_0$. Therefore the geometric realization of $\mathcal{F}(\Delta^\bullet_A)$ is contractible \cite[Lemma A.5.3.7]{SAG}.
     
     The second statement follows from the first using the universality of colimits.
    \end{proof}

    For an $S$-scheme $X$, let $\hfflat_S(X)$ (resp.\ $h^\mathrm{fsyn}_S(X)$) be the presheaf of groupoids that sends an $S$-scheme $U$ to the groupoid of spans $U \leftarrow Z \rightarrow X$
    where $Z \to U$ is finite locally free (resp.\ finite syntomic) and the morphisms are over $S$.
    In particular, $\hfflat_S(S)$ is the presheaf $\FFLAT$ restricted to $S$-schemes. There is a forgetful map $\Hilb(\A^\infty_X/S)\to \hfflat_S(X)$ sending a closed subscheme $Z\hook \A^\infty_X \times_S U$ to the span $U\leftarrow Z\rightarrow X$.

    \begin{prop} \label{prop:h^fflat via Hilb}
        Let $X$ be a separated $S$-scheme.
        \begin{enumerate}
            \item\label{it:potencialInfinity} The forgetful map $\Hilb(\A^\infty_X/S)\to \hfflat_S(X)$ is a universal $\A^1$-equivalence on affine schemes.
            \item\label{it:actualInfinity} If $E$ is an infinite set and $X\to S$
                is affine, the forgetful map
                $\Hilb(\A^E_X/S)\to \hfflat_S(X)$ is a universal $\A^1$-equivalence on affine schemes.
        \end{enumerate}
    \end{prop}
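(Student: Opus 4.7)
The plan is to apply Lemma~\ref{lem:deloop1_criterion_for_A1equiv}(2), which reduces the proposition to two verifications: closed gluing for the fibers of the forgetful map over $T$-points of $\hfflat_S(X)$, and surjectivity on $\pi_0$ of the map $\mathcal F(\Spec A)\to \mathcal F(\Spec A/I)\times_{\mathcal G(\Spec A/I)}\mathcal G(\Spec A)$ for every affine scheme $\Spec A$ over $S$ and every finitely generated ideal $I\subset A$, where $\mathcal F$ is the relevant Hilbert scheme functor and $\mathcal G=\hfflat_S(X)$. Closed gluing holds because the fiber over a span $T\leftarrow Z\rightarrow X$ sends $T'\to T$ to the set of closed immersions $Z\times_T T'\hookrightarrow \A^\infty_X\times_S T'$ (resp.\ $\A^E_X\times_S T'$) over $X\times_S T'$; such a set satisfies closed gluing as in~\cite[\S A]{deloop1}, combined with Lemma~\ref{lem:fflat-pushout} for the pushouts of finite locally free schemes.

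For the $\pi_0$ surjectivity, unwinding the groupoid data of $\hfflat_S(X)$ reduces the task to the following: given a finite locally free $\Spec A$-scheme $Z=\Spec B$ with a morphism $q\colon Z\to X$ over $S$, and given a closed immersion $\iota_0\colon Z\otimes_A A/I \hookrightarrow \A^\infty_X\times_S\Spec(A/I)$ (resp.\ the $\A^E$ version) over $X$ inducing the base change of the prescribed span, produce a closed immersion $\iota\colon Z\hookrightarrow \A^\infty_X\times_S\Spec A$ (resp.\ $\A^E_X\times_S\Spec A$) over $X$ whose mod-$I$ reduction has underlying subscheme equal to $\iota_0$. Working locally on $X$ over affine opens and invoking the graph construction (available because $X\to S$ is separated), the problem becomes algebraic: lift the $(A/I)\otimes R$-algebra surjection $(A/I\otimes R)[x_e]\twoheadrightarrow B/IB$ encoding $\iota_0$ to an $(A\otimes R)$-algebra surjection $(A\otimes R)[x_e]\twoheadrightarrow B$, where $R$ is the coordinate ring of an affine open of $X$ meeting $q(Z)$.

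For part (1), the crucial simplification is that $\iota_0$ lies in $\colim_n\Hilb(\A^n_X/S)(\Spec A/I)$ and therefore factors through $\A^N_X\times_S\Spec(A/I)$ for some finite $N$; consequently the coordinates $g_n:=\iota_0^*(x_n)\in B/IB$ vanish for $n>N$. Pick arbitrary lifts $h_1,\ldots,h_N\in B$ of $g_1,\ldots,g_N$, and choose $A$-algebra generators $b_1,\ldots,b_m$ of the $A$-finite algebra $B$. Since $(g_n)_{n\le N}$ and the image of $R$ generate $B/IB$ over $A/I$, each $\bar b_j$ is a polynomial expression $P_j(g_1,\ldots,g_N,R)$ over $A/I$; lifting $P_j$ over $A$ produces $r_j:=b_j-\tilde P_j(h_1,\ldots,h_N,R)\in IB$. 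Setting $h_{N+j}=r_j$ for $j\le m$ and $h_n=0$ for $n>N+m$ defines a map $(R\otimes A)[x_n]\to B$ whose image contains each $b_j$, hence is surjective, and which reduces modulo $I$ to $\iota_0^*$; this yields the desired $\iota$.

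For part (2), the new obstruction is that $\iota_0$ may use every coordinate $g_e$ nontrivially, so no ``fresh'' indices are available for the correction step. The remedy is a polynomial change of variables. Since $B/IB$ is finitely generated as an $(A/I\otimes R)$-algebra (using $X=\Spec R$ affine), there is a finite subset $T\subset E$ such that $(g_e)_{e\in T}$ together with the image of $R$ already generate $B/IB$ over $A/I$; for $e\notin T$ write $g_e=P_e(g_{e'};e'\in T,R)$ with $P_e$ a polynomial over $A/I$. Lifting each $P_e$ to $\tilde P_e$ over $A$, the triangular rule $x_e\mapsto x_e-\tilde P_e$ for $e\notin T$ and $x_e\mapsto x_e$ for $e\in T$ defines a polynomial automorphism $\tilde\phi$ of $\A^E_X\times_S\Spec A$ over $X$, whose inverse has the same form. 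Composing $\iota_0$ with $\tilde\phi\otimes A/I$ yields an embedding $\iota'_0$ whose coordinates vanish outside $T$; the part~(1) argument applied to $\iota'_0$—using finitely many indices in $E\setminus T$ (whose new $g$-values are zero) for the correction step—produces a lift $\tilde\iota$, and $\iota:=\tilde\phi^{-1}\circ\tilde\iota$ lifts $\iota_0$. The main technical obstacle is verifying that $\tilde\phi$ genuinely defines an automorphism of the infinite-coordinate scheme $\A^E_X\times_S\Spec A$, which is ensured by the triangular structure giving a closed-form inverse.
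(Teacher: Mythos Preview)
Your overall strategy via Lemma~\ref{lem:deloop1_criterion_for_A1equiv}(2) matches the paper's, and your treatment of part~(2) is correct, though it takes a different route: you use a triangular polynomial automorphism of $\A^E$ to reduce to finitely many nontrivial coordinates and then invoke the part~(1) construction, whereas the paper argues directly by choosing a finite subset $E'\subset E$ whose coordinates already generate $B/IB$, lifting those arbitrarily, and then for finitely many indices $e_i\in E\setminus E'$ setting $b_{e_i}=r_i+h_i$ where $h_1,\dots,h_m$ generate $IB$ as a module and $r_i$ is a lift of $\bar b_{e_i}$ lying in the subalgebra generated by $(b_e)_{e\in E'}$. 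Both work; the paper's is shorter and avoids the automorphism.

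However, there is a genuine gap in your part~(1). You write ``working locally on $X$ over affine opens $\ldots$ where $R$ is the coordinate ring of an affine open of $X$ meeting $q(Z)$'' and then build the correction terms $r_j=b_j-\tilde P_j(h_1,\dots,h_N,R)$ using elements of $R$. But $X$ is only assumed separated, not affine, and there is no reason for $q(Z)$ to lie in a single affine open of $X$; hence there is no global $R\to B$, and your $r_j$ are not globally defined elements of $B$. The closed immersion $\iota$ must be produced globally, so localizing on $X$ does not suffice. The graph construction you invoke only shows that \emph{if} $Z\to \A^{N+m}\times_S U$ is already a closed immersion then so is $Z\to \A^{N+m}_X\times_S U$; it does not let you compute in an affine chart of $X$.

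The paper sidesteps this by choosing correction coordinates that make no reference to $X$: it takes generators $i_1,\dots,i_s$ of $I$ and $A$-module generators $b_1,\dots,b_e$ of $B$, and uses the $se$ products $i_{r'}b_{e'}$ as the extra coordinates. These are global functions on $Z$. Finiteness of $Z$ over $\A^{r+m}_{X\times_S U}$ then follows from finiteness over $U$ and separatedness of $X$, and surjectivity of $f^\sharp$ is checked fiberwise via Nakayama: on $Z_0$ one uses the given closed immersion, and at $z\in Z\setminus Z_0$ some $i_s$ is a unit, so $i_sb_1,\dots,i_sb_e$ already generate the stalk. You can repair your argument by replacing your $r_j$ with such products (or with any chosen $A$-module generators of $IB$) and then verifying the closed-immersion property as the paper does.
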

	 
	 \begin{proof}
		  The presheaves $\Hilb(\A^\infty_X/S)$, $\Hilb(\A^E_X/S)$, and $\hfflat_S(X)$ all satisfy closed gluing, so Lemma~\ref{lem:deloop1_criterion_for_A1equiv}(2) applies. 
		  Suppose $U=\Spec A$ is an affine scheme over $S$ and let $U_0=\Spec A/I$ for some finitely generated ideal $I=(i_1,\dots,i_r)$. Let $U\leftarrow Z=\Spec B\to X$ be a span in $\hfflat_S(X)(U)$ and let $Z_0=Z\times_UU_0$.
		  To prove \eqref{it:potencialInfinity}, we must show that every closed embedding
		  $Z_0 \hook \A^r_{X\times_S U_0}$ over $X\times_S U_0$ can be extended to a closed embedding $Z\hook \A^{r+m}_{X\times_S U}$ over $X\times_S U$, for some $m\geq 0$:
        \[
            \begin{tikzcd}
                Z_0\arrow[d, hook]\arrow[r, hook] & \A^r_{X \times_S U_0} \arrow[r, "\operatorname{inj}", hook] & \A^{r+m}_{X \times_S U_0}\arrow[d, hook]\\
                Z \arrow[rr, dashed, hook]\arrow[dr] && \A^{r+m}_{X \times_S U}\arrow[ld]\\
                & X\times_S U
            \end{tikzcd}
        \]
        Here, $\operatorname{inj}$ is the canonical map adding zero coordinates.
        
        We construct the embedding $Z\to \A^{r+m}_{X \times_S U} = \A^{r+m}\times X \times_S U$ as follows. The map to $X\times_S U$ is already given, and we lift the map $Z_0\to \A^r$ to any map $Z\to \A^r$. It remains to fix $m$ and the map $Z\to \A^m$. Such a map corresponds to a tuple of global functions on $Z$.
        
        Let $i_1, \ldots , i_{r}$ be generators of the ideal $I\subset A$ and let $b_1, \ldots , b_e$ be generators of $A$-module $B$. Let $m=re$ and take the $m$-tuple of functions given by all possible products
        $\{i_{r'}b_{e'}\}_{r'\leq r, e'\leq e}$. By assumption, the scheme $Z$ is finite over $U$ hence over $\A^{r+m}_U$. Since $X$ is separated over $S$, the scheme $Z$ is finite over
        $\A^{r+m}_{X \times_S U}$ as well~\cite[Remark 9.11]{WedhornAG}.
        Therefore to prove that $f\colon Z\to \A^{r+m}_{X \times_S U}$ is a closed immersion it remains to show that $f^\sharp:\sO_{\A^{r+m}_{X\times_S U}} \to f_*(\sO_Z)$ is a surjection of sheaves. Both sheaves are $\sO_{\A^{r+m}_{X\times_S U}}$-modules of finite type, and so we can use Nakayama's lemma and check the condition on the fibers over the image of any point of $Z$. The statement is obvious for the points of $Z_0$, since we know that $Z_0\to \A^{r+m}_{X\times_S U_0}$ is a closed immersion. Every point $z\in Z-Z_0$ has a distinguished open neighborhood in $Z$ where one of the generators of $I$, say $i_s$, is invertible. But then the image of $f^\sharp$ contains $i_sb_1,\dots,i_sb_e$ which generate $\sO_{Z,z}$ as an $\sO_{\A^{r+m}_{X \times_S U},f(z)}$-module. Therefore $f^\sharp$ is surjective at $z$ as well. This concludes the proof of~\eqref{it:potencialInfinity}.
        
			We now prove~\eqref{it:actualInfinity}. By the assumption, $X \times_S U$ is affine; denote it by $\Spec C$.
         We must then prove that every generating family $(\bar b_e)_{e\in E}$ of the $C$-algebra $B/IB$ can be lifted to a generating family $(b_e)_{e\in E}$ of $B$. Since $B/IB$ is a finitely generated $C$-algebra, there exists a finite subset $E'\subset E$ such that $(\bar b_e)_{e\in E'}$ generates $B/IB$; to construct it fix a set of finitely many generators of $B/IB$ over $C$ and then pick $E'$ indexing the elements of $(\bar b_e)_{e\in E}$ that appear in expressions for these generators. Choose arbitrary lifts $b_e\in B$ of $\bar b_e$ for $e\in E'$.

         Since $B$ is a finite $C$-algebra and $I$ is finitely generated, $IB$ is a finite $C$-module. Let $h_1,\dotsc,h_m$ be generators of $IB$ as a $C$-module. Note that the elements $h_i$ and $b_e$ for $e\in E'$ generate $B$ as a $C$-algebra.
			Since $E$ is infinite, there exist distinct elements $e_i\in E-E'$ for $1\leq i\leq m$. For every such $i$, choose an element $r_i$ lifting $\bar b_{e_i}$ in the $C$-subalgebra of $B$ generated by $b_{e}$ for $e\in E'$. Then $b_{e_i}:=r_i+h_i$ is also a lift of $\bar b_{e_i}$, and the elements $b_{e_i}$ and $b_e$ for $e\in E'$ generate $B$ as a $C$-algebra.
         Choosing the remaining lifts $b_e$ arbitrarily, we thus obtain a generating family $(b_e)_{e\in E}$ as desired.
    \end{proof}
	
    \begin{rem}
        The separatedness assumption in Proposition~\ref{prop:h^fflat via
        Hilb} is necessary for the following reason. Let $X$ be an affine line
        with doubled origin and $i\colon \A^1\hook X$ be one of the
        inclusions. View $\A^1$ as a degree one family over itself. Then there does not exist a closed immersion $\A^1\hook\A^{m+1}_{X}$ whose
        projection is $i$.
    \end{rem}
	 
    It follows from Proposition~\ref{prop:h^fflat via Hilb} that for every affine $S$-scheme $X$ the inclusion $\Hilb_d(\A^\infty_X/S)\subset \Hilb_d(\A^\N_X/S)$ is an $\A^1$-equivalence on affine schemes, so we concentrate on the former space.
	 
\begin{cor}\label{cor:Hilb=FFlat}
	The forgetful map $\Hilb(\A^\infty)\to \FFLAT$ is a universal $\A^1$-equivalence on affine schemes.
\end{cor}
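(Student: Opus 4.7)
The plan is to derive this corollary as a direct specialization of Proposition~\ref{prop:h^fflat via Hilb}\eqref{it:potencialInfinity} to the case $X=S$. With this choice, $\A^\infty_X/S$ is simply $\A^\infty$ over $S$, so the source becomes $\Hilb(\A^\infty_S/S)=\Hilb(\A^\infty)$. On the target side, $\hfflat_S(S)$ sends an $S$-scheme $U$ to the groupoid of spans $U\leftarrow Z\to S$ with $Z\to U$ finite locally free; since $Z$ is automatically an $S$-scheme via its structure map, the leg $Z\to S$ carries no extra data, and $\hfflat_S(S)$ is canonically equivalent to the restriction of $\FFLAT$ to $S$-schemes. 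Under these identifications the forgetful map $\Hilb(\A^\infty)\to\FFLAT$ coincides with the map $\Hilb(\A^\infty_X/S)\to \hfflat_S(X)$ of the proposition.

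The hypothesis of the proposition—that $X$ is separated over $S$—holds trivially for $X=S$, so the conclusion applies with no additional effort. The substantive content, namely the lifting of a closed embedding $Z_0\hook \A^r_{U_0}$ over $U_0=\Spec(A/I)$ to a closed embedding $Z\hook \A^{r+m}_U$ over $U=\Spec A$ by enlarging the coordinate set with products $i_{r'}b_{e'}$ of ideal generators and module generators, is already carried out in the proof of Proposition~\ref{prop:h^fflat via Hilb}; there is nothing further to prove. The only potential obstacle would have been the possibility that $\FFLAT$ and $\hfflat_S(S)$ differ in a nontrivial way, but because morphisms to the base are built into the definition of an $S$-scheme this is not an issue.
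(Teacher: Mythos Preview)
Your proof is correct and takes the same approach as the paper, which simply says ``Take $X=S=\Spec\Z$ in Proposition~\ref{prop:h^fflat via Hilb}.'' The only minor difference is that you leave the base $S$ general while the paper fixes $S=\Spec\Z$ so that $\Sch_S=\Sch$ and $\hfflat_S(S)=\FFLAT$ as a presheaf on all schemes, matching the statement of the corollary precisely; your extra paragraph recapitulating the lifting argument from the proof of the proposition is not needed.
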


\begin{proof}
	Take $X=S=\Spec\Z$ in Proposition~\ref{prop:h^fflat via Hilb}.
\end{proof}

\begin{cor}\label{cor:Hilb=Gr}
	The forgetful map
	\[
	\underline\Z \times \Hilb_\infty(\A^\infty) \to K
	\]
	is an $\A^1$-equivalence on affine schemes, where $K$ is the presheaf of $K$-theory spaces.
	In particular, if $R$ is a regular noetherian commutative ring, then
	\[
	K(R) \simeq \Z^{\Spec R} \times \left\lvert \Hilb_\infty(\A^\infty)(\Delta^\bullet_R)\right\rvert.
	\]
\end{cor}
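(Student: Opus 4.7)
The plan is to deduce the corollary from Corollary~\ref{cor:Hilb=FFlat} and Theorem~\ref{thm:main-gp} by group completing and identifying both sides. The forgetful map $\Hilb(\A^\infty)\to \FFLAT$ is a morphism of $\Einfty$-monoids under disjoint union, and since $\Lhtp$ commutes with group completion by \cite[Lemma 5.5]{HoyoisCdh}, Corollary~\ref{cor:Hilb=FFlat} implies that the induced map $\Hilb(\A^\infty)^\gp\to \FFLAT^\gp$ is an $\A^1$-equivalence on affine schemes. Composing with the $\A^1$-equivalence $\FFLAT^\gp\to \Vect^\gp$ from Theorem~\ref{thm:main-gp} yields an $\A^1$-equivalence $\Hilb(\A^\infty)^\gp\to \Vect^\gp$ on affine schemes.

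Next I would identify both sides. On the target, $\Vect^\gp\simeq K$ as presheaves on affine schemes, by the group-completion description of algebraic K-theory. On the source, applying the McDuff--Segal group completion theorem to the $\N$-graded $\Einfty$-monoid $\Hilb(\A^\infty)=\coprod_{d\geq 0}\Hilb_d(\A^\infty)$ gives
\[
\Hilb(\A^\infty)^\gp \simeq \underline\Z\times \Hilb_\infty(\A^\infty),
\]
where $\Hilb_\infty(\A^\infty)=\colim_{d,n}\Hilb_d(\A^n)$ is the colimit along the shift maps that adjoin a disjoint rational point at a new coordinate axis. Combining these identifications with the previous paragraph proves the first statement.

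For the second statement, $K(R)$ is $\A^1$-invariant for $R$ regular noetherian by Quillen's theorem, so evaluating the $\A^1$-equivalence of presheaves at $\Spec R$ and unfolding the definition of $\Lhtp$ as a geometric realization produces
\[
K(R) \simeq \Z^{\Spec R}\times \big\lvert \Hilb_\infty(\A^\infty)(\Delta^\bullet_R)\big\rvert.
\]
The main subtlety I expect is the McDuff--Segal step: one must verify that multiplication by a generator of $\pi_0\simeq \N$ is $\A^1$-invertible on $\Hilb(\A^\infty)^\gp$ to obtain the colimit-along-shift description, analogously to the use of \cite[Proposition 5.1]{deloop4} in the proof of Theorem~\ref{thm:main-gp}.
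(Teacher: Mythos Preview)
Your approach via group completion is workable, but the paper takes a shorter route using \emph{stabilization} instead. The paper simply factors the forgetful map as
\[
\underline\Z \times \Hilb_\infty(\A^\infty) \to \FFLAT^\st \to \Vect^\st \to K
\]
and checks that each arrow is an $\A^1$-equivalence on affine schemes: the first by Corollary~\ref{cor:Hilb=FFlat} (passing to the sequential colimit along the ``add a disjoint point'' shift), the second by Theorem~\ref{thm:main-st}, and the third by \cite[Example~5.2]{deloop4}. Since $\underline\Z\times\Hilb_\infty(\A^\infty)$ is already, by definition, the telescope along the shift map, it maps directly to $\FFLAT^\st$ without any monoidal structure on the Hilbert scheme side, and no McDuff--Segal argument is needed.

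Your route, by contrast, demands two extra ingredients you flag but do not supply: an $\Einfty$-monoid structure on $\Hilb(\A^\infty)$ making the forgetful map to $\FFLAT$ an $\Einfty$-map (``disjoint union'' of closed subschemes of $\A^\infty$ is not literally defined and requires coherent choices of embeddings $\A^\infty\sqcup\A^\infty\hookrightarrow\A^\infty$), and the telescope-versus-group-completion identification $\Hilb(\A^\infty)^\gp\simeq\underline\Z\times\Hilb_\infty(\A^\infty)$, which via \cite[Proposition~5.1]{deloop4} requires checking that the cyclic permutation of three disjoint points in $\Hilb(\A^\infty)$ is $\A^1$-homotopic to the identity. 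Both can be done, but the paper's stabilization argument sidesteps them entirely by invoking Theorem~\ref{thm:main-st} rather than Theorem~\ref{thm:main-gp}.
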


\begin{proof}
	This map factors as
	\[
	\underline\Z \times \Hilb_\infty(\A^\infty) \to \FFLAT^\st \to \Vect^\st \to K.
	\]
	The first map is an $\A^1$-equivalence on affine schemes by Corollary~\ref{cor:Hilb=FFlat}, and the second map is an $\A^1$-equivalence by Theorem~\ref{thm:main-st}. The third map is also an $\A^1$-equivalence on affine schemes, see for example \cite[Example 5.2]{deloop4}. The last statement follows because $K\simeq\Lhtp K$ on regular noetherian schemes.
\end{proof}

\begin{cor}
	Let $X$ be a separated $S$-scheme. Then the forgetful map $\Hilb^\lci(\A^\infty_X/S)\to h_S^\mathrm{fsyn}(X)$ is a universal $\A^1$-equivalence on affine schemes.
\end{cor}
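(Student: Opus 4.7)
The plan is to apply Lemma~\ref{lem:deloop1_criterion_for_A1equiv}(2) in exactly the same way as in the proof of Proposition~\ref{prop:h^fflat via Hilb}(1), once I record that the lci condition cutting out $\Hilb^\lci \subset \Hilb$ and the syntomic condition cutting out $h^\mathrm{fsyn}_S(X) \subset \hfflat_S(X)$ are the same condition on the abstract morphism $Z\to U$, namely that it be additionally lci (equivalently, finite syntomic given that it is already finite locally free). Consequently, the fiber of the forgetful map $\Hilb^\lci(\A^\infty_X/S)\to h^\mathrm{fsyn}_S(X)$ over a chosen finite syntomic span $U\leftarrow Z\to X$ agrees with the fiber of the map of Proposition~\ref{prop:h^fflat via Hilb}(1) over the same span viewed in $\hfflat_S(X)$.

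I would first observe that both presheaves satisfy closed gluing: for $h^\mathrm{fsyn}_S(X)$ this is the same verification as for $\hfflat_S(X)$, using Lemma~\ref{lem:fflat-pushout} together with the stability of lci morphisms under the relevant pushout/base-change constructions (the syntomic condition is checked fiberwise, so it is preserved by the pushouts appearing there); and $\Hilb^\lci(\A^\infty_X/S)$ is an open sub-presheaf of $\Hilb(\A^\infty_X/S)$ whose defining condition depends only on the abstract morphism $Z\to U$, hence inherits closed gluing from the latter. For the lifting step, I would then reuse the construction of Proposition~\ref{prop:h^fflat via Hilb}(1) verbatim: given $U=\Spec A$, a finitely generated ideal $I=(i_1,\dotsc,i_r)\subset A$, a finite syntomic span $U\leftarrow Z=\Spec B\to X$, and a closed embedding $Z_0\hookrightarrow \A^r_{X\times_S U_0}$ over $X\times_S U_0$, the $m=re$ products $\{i_{r'}b_{e'}\}$ (where $b_1,\dotsc,b_e$ generate $B$ as an $A$-module) produce a closed immersion $Z\hookrightarrow \A^{r+m}_{X\times_S U}$ over $X\times_S U$ extending the given one, via the same Nakayama argument. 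Since $Z\to U$ is finite syntomic by hypothesis, this closed immersion automatically lies in $\Hilb^\lci(\A^\infty_X/S)(U)$, yielding the $\pi_0$-surjectivity required by Lemma~\ref{lem:deloop1_criterion_for_A1equiv}(2).

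The only substantive step is thus the compatibility observation of the first paragraph: the syntomic/lci condition is a property of the morphism $Z\to U$ alone and not of its particular embedding into affine space, so it can be ``factored out'' of both sides of the comparison symmetrically. No new geometric input beyond Proposition~\ref{prop:h^fflat via Hilb}(1) is required, and no hypothesis on $X$ beyond separatedness enters the argument.
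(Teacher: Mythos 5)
Your argument is correct, but it takes the long way around and in fact re-proves something the paper already packaged for you. Your first paragraph contains precisely the key observation: the lci condition on $\Hilb^\lci(\A^\infty_X/S)\subset\Hilb(\A^\infty_X/S)$ and the syntomic condition on $h^\mathrm{fsyn}_S(X)\subset\hfflat_S(X)$ are the same property of the abstract morphism $Z\to U$, so the square
\[
\begin{tikzcd}
\Hilb^\lci(\A^\infty_X/S)\ar[r]\ar[d] & h_S^\mathrm{fsyn}(X)\ar[d]\\
\Hilb(\A^\infty_X/S)\ar[r] & \hfflat_S(X)
\end{tikzcd}
\]
is a pullback in $\Pre(\Sch_S)$. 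At that point you are done: Proposition~\ref{prop:h^fflat via Hilb}(1) asserts that the bottom map is a \emph{universal} $\A^1$-equivalence on affine schemes, and the word ``universal'' was introduced exactly so that this class of maps is closed under arbitrary base change in $\Pre(\Sch_S)$. The paper's proof is just this two-line deduction. Instead, having made the pullback observation, you go on to re-run the closed-gluing verification and the explicit lifting argument of Proposition~\ref{prop:h^fflat via Hilb}(1) verbatim; this is a valid application of Lemma~\ref{lem:deloop1_criterion_for_A1equiv}(2) and produces a correct proof, but it amounts to re-deriving the fact that universal $\A^1$-equivalences pull back, rather than invoking it. The upshot: same mathematics, but you can cut your second paragraph entirely once you recognize what ``universal'' buys you.
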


\begin{proof}
    We have a tautological pullback square
    \[
        \begin{tikzcd}
            \Hilb^\lci(\A^\infty_X/S)\ar[r]\ar[d] & h_S^\mathrm{fsyn}(X)\ar[d]\\
            \Hilb(\A^\infty_X/S)\ar[r] & \hfflat_S(X)\rlap,
        \end{tikzcd}
    \]
	so the claim follows from Proposition~\ref{prop:h^fflat via Hilb}\eqref{it:potencialInfinity}.
\end{proof}

    It is well known that the forgetful map $\Gr_d(\A^\infty)\to\Vect_d$ is an $\A^1$-equivalence on affine schemes. We give a proof in Proposition~\ref{prop:quot} below, since we could not locate a reference. In fact, we consider the following slightly more general situation, which is the linearized version of Proposition~\ref{prop:h^fflat via Hilb}. For $X\to S$ an affine morphism of schemes and $\sF$ a quasi-coherent sheaf on $X$, the scheme $\Quot_d(X/S,\sF)$ parametrizes quotients of $\sF$ that are finite locally free of rank $d$ over $S$, see~\cite[Theorem~4.7]{Gustavsen_Laksov_Skjelnes__Quot}. We denote by $\COH_d(X/S)$ the presheaf of groupoids on $S$-schemes parametrizing quasi-coherent sheaves on $X$ that are finite locally free of rank $d$ over $S$ (which is an algebraic stack at least when $X\to S$ is of finite type, see~\cite[\sectsign 4]{RydhQuot}). There is a forgetful map
$\Quot_d(X/S,\sF) \to \COH_d(X/S)$ that for $X=S$ becomes $\Gr_d(\sF)\to\Vect_d$. We define
\[
\Quot_d(X/S,\sO_X^\infty) = \colim_n\Quot_d(X/S,\sO_X^n).
\]

\begin{prop}\label{prop:quot}
	Let $S$ be a scheme, $X$ an affine $S$-scheme, and $d\geq 0$. Then the forgetful map
	\[
	\Quot_d(X/S,\sO_X^\infty) \to \COH_d(X/S)
	\]
	is a universal $\A^1$-equivalence on affine schemes.
\end{prop}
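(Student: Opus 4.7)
The plan is to apply Lemma~\ref{lem:deloop1_criterion_for_A1equiv}(2) to the forgetful map, following the template of Proposition~\ref{prop:h^fflat via Hilb}\eqref{it:potencialInfinity}; the linear setting is in fact somewhat simpler because $X$ is already assumed affine over $S$, so no coordinate-adding injection argument is needed. First I would verify that $\Quot_d(X/S,\sO_X^\infty)$ and $\COH_d(X/S)$ both satisfy closed gluing on $\Sch_S$, so that the fibers of the forgetful map do too: for $\COH_d(X/S)$ this is Milnor patching of modules along a surjection of rings, and for $\Quot_d(X/S,\sO_X^\infty)$ one uses in addition that a compatible pair of surjections $C_1^n\twoheadrightarrow M_1$ and $C_2^n\twoheadrightarrow M_2$ glues to a surjection $C^n\twoheadrightarrow M_1\times_{M_0}M_2$ onto the glued Milnor module.

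With closed gluing in hand, it remains to verify the surjectivity-on-$\pi_0$ criterion. Write $U=\Spec A$ and $U_0=\Spec A/I$ with $I=(i_1,\dots,i_s)$ finitely generated, and set $X_U=\Spec C$, which is possible because $X\to S$ is affine. A $U$-point of $\COH_d(X/S)$ corresponds to a $C$-module $M$ that is finite locally free of rank $d$ as an $A$-module, and a compatible $U_0$-point of $\Quot_d(X/S,\sO_X^\infty)$ corresponds, after choosing some $r\geq 0$, to a list $\bar m_1,\dots,\bar m_r$ of $C$-module generators of $M/IM$. The lifting task becomes: produce $m\geq 0$ and a $C$-linear surjection $C^{r+m}\twoheadrightarrow M$ whose composition with the canonical inclusion $C^r\hookrightarrow C^{r+m}$ reduces modulo $I$ to the given surjection.

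The construction is direct: lift each $\bar m_i$ arbitrarily to $m_i\in M$; pick $C$-module generators $n_1,\dots,n_e$ of $M$, which exist since $M$ is finite as an $A$-module and $A\to C$; and take as additional generators the $m=se$ elements $\{i_jn_k\}_{1\leq j\leq s,\,1\leq k\leq e}$, all of which lie in $IM$ and therefore vanish modulo $I$. Since $M=\sum_i Cm_i+IM$ by the hypothesis on the $\bar m_i$, and $IM=\sum_{j,k}C(i_jn_k)$, the combined list generates $M$ as a $C$-module, yielding the surjection $C^{r+se}\twoheadrightarrow M$ that extends the given one after reduction mod $I$. I expect the main thing to be careful about to be the closed gluing verification rather than the algebraic lifting, which is a direct linearization of Proposition~\ref{prop:h^fflat via Hilb}\eqref{it:potencialInfinity} and requires none of the reindexing used for Proposition~\ref{prop:h^fflat via Hilb}\eqref{it:actualInfinity}.
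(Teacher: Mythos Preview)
Your proposal is correct and follows essentially the same approach as the paper: both invoke Lemma~\ref{lem:deloop1_criterion_for_A1equiv}(2) and reduce to lifting a generating family of $M/IM$ to one of $M$ by appending generators of $IM$. The paper is slightly more streamlined in that it simply takes \emph{any} generating family of the finitely generated $B$-module $IM$ rather than the explicit products $i_jn_k$, but your construction is of course a valid instance of this; conversely, you are more explicit about the closed-gluing verification, which the paper leaves implicit.
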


\begin{proof}
    By Lemma~\ref{lem:deloop1_criterion_for_A1equiv}(2), it suffices to show that for any affine scheme $Y=\Spec A$ and any finitely presented closed subscheme  $Y_0=\Spec A/I$, the map
 \[\Quot_d(X/S,\mathcal{O}_X^{\infty})(Y) \to \Quot_d(X/S,\mathcal{O}_X^{\infty})(Y_0)\times_{\COH_d(X/S)(Y_0)} \COH_d(X/S)(Y)\]
 is surjective.
 Concretely, if $Y\times_SX=\Spec B$, we must show that for any $B$-module $M$ that is finite locally free of rank $d$ over $A$ and any generators $(\bar x_1,\dots, \bar x_r)$ of $M/IM$ as a $B/IB$-module, we can find generators $(x_1,\dots,x_{r+s})$ of $M$ as a $B$-module such that $x_1,\dots,x_r$ lift $\bar x_1,\dots, \bar x_r$ and $x_{r+1},\dots,x_{r+s}$ live in $IM$. This is indeed possible, since we can take any lifts $x_1,\dots,x_r$ and any family $(x_{r+1},\dots,x_{r+s})$ of generators of the finitely generated $B$-module $IM$.
\end{proof}

\begin{rem}
	As in Proposition~\ref{prop:h^fflat via Hilb}, one can also show that the map $\Quot_d(X/S,\sO_X^E) \to \COH_d(X/S)$ is a universal $\A^1$-equivalence on affine schemes for any infinite set $E$.
\end{rem}

There is a canonical map $\Gr_{d-1}(\A^n)\to \Hilb_{d}(\A^n)$ sending a surjection $\sO_T^n \to \sE$ to the surjection $\Sym_{\sO_T}(\sO_T^n)\to \sO_T\oplus\sE$, where $\sO_T\oplus\sE$ is the square-zero extension of $\sO_T$ by $\sE$.

\begin{cor}\label{cor:Gr-Hilb}
		The map $\Gr_{d-1}(\A^\infty) \to \Hilb_{d}(\A^\infty)$ is an $\A^1$-equivalence on affine schemes.
\end{cor}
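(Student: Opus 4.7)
The plan is to fit the map $\Gr_{d-1}(\A^\infty) \to \Hilb_d(\A^\infty)$ into a commutative square with the three previously-established $\A^1$-equivalences and apply the 2-out-of-3 property.

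Concretely, I would form the diagram
\[
\begin{tikzcd}
\Gr_{d-1}(\A^\infty) \ar{r} \ar{d} & \Hilb_d(\A^\infty) \ar{d} \\
\Vect_{d-1} \ar{r}{\alpha} & \FFLAT_d\rlap,
\end{tikzcd}
\]
where the vertical maps are the forgetful maps (sending a surjection $\sO_T^\infty \twoheadrightarrow \sE$ to $\sE$, respectively a closed subscheme of $\A^\infty_T$ to the underlying finite locally free $T$-algebra) and the bottom map is the square-zero extension $\alpha$ from Section~\ref{sec:FFLATtoVect}. The diagram commutes essentially by the very definition of the top horizontal map, which sends $\sO_T^\infty \twoheadrightarrow \sE$ to the quotient $\Sym_{\sO_T}(\sO_T^\infty) \twoheadrightarrow \sO_T\oplus\sE$ and thus lifts $\alpha$ along the forgetful maps.

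Now the left vertical map is a universal $\A^1$-equivalence on affine schemes by Proposition~\ref{prop:quot} (taking $X=S$ so that $\COH_{d-1}(X/S)=\Vect_{d-1}$ and $\Quot_{d-1}(X/S,\sO^\infty)=\Gr_{d-1}(\A^\infty)$), the right vertical map is a universal $\A^1$-equivalence on affine schemes by Corollary~\ref{cor:Hilb=FFlat} (restricted to the degree $d$ component), and the bottom map $\alpha$ is an $\A^1$-equivalence of presheaves by Theorem~\ref{thm:stack-equivalence}. Applying $\Lhtp$ and the 2-out-of-3 property to the resulting square of equivalences on affine schemes immediately yields that the top horizontal map is an $\A^1$-equivalence on affine schemes.

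I do not anticipate a real obstacle here; the only thing to verify carefully is that the square commutes, i.e.\ that the forgetful map $\Hilb_d(\A^\infty)\to \FFLAT_d$ sends the image of a quotient $\sO_T^\infty\twoheadrightarrow\sE$ under the top map to $\alpha(\sE)=\sO_T\oplus\sE$, which is built into the definition of the map $\Gr_{d-1}\to\Hilb_d$.
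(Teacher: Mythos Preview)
Your proposal is correct and matches the paper's proof essentially verbatim: the paper forms the same commutative square with $\alpha$ on the bottom, invokes Proposition~\ref{prop:quot} and Proposition~\ref{prop:h^fflat via Hilb} (of which your Corollary~\ref{cor:Hilb=FFlat} is the relevant special case) for the vertical maps, Theorem~\ref{thm:stack-equivalence} for the bottom, and concludes by 2-out-of-3.
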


\begin{proof}
	There is a commutative square
	\[
	\begin{tikzcd}
				\Gr_{d-1}(\A^\infty) \ar{r}\ar{d} & \Hilb_{d}(\A^\infty)\ar{d}\\
		\Vect_{d-1} \ar{r}{\alpha} & \FFLAT_{d}\rlap.
	\end{tikzcd}
	\]
	The vertical maps are $\A^1$-equivalences on affine schemes by Proposition \ref{prop:quot} and Proposition~\ref{prop:h^fflat via Hilb}, and the lower horizontal map is an $\A^1$-equivalence by Theorem~\ref{thm:stack-equivalence}.
\end{proof}

    In \cite[Section~1]{segal1977K-homology}, Segal constructs for a compact topological space $X$ its ``labeled'' configuration space $F(X_+)$, which is an $\Einfty$-space whose points are finite sets of points of $X$, labeled by jointly orthogonal real vector spaces embedded into $\bR^\infty$. The space $F(X_+)$ has a canonical forgetful map to the total symmetric power of $X$, which sends $\{V_x \}_{x \in S}$ to $\sum_{x \in S} \dim (V_x) \cdot [x]$, where $S$ is a finite set of points of $X$. 

     For a smooth quasi-projective variety $X$ over a field $k$, the ind-scheme $\Hilb(\A^\infty_X)$ can be thought of as a geometric analogue of Segal's labeled configuration space of $X$, with vector spaces replaced by finite algebras.
   To see this, recall the Hilbert--Chow
   morphism~\cite[Theorem~2.16]{Bertin__punctual_Hilbert_schemes}
    \[
        \rho_X\colon \Hilb_{d}(X) \to \Sym^d(X),
    \]
    where $\Sym^d(X)$ is the quotient of $X^{d}$ by the naturally acting
    symmetric group on $d$ letters.
	 On points, the morphism $\rho_X$ sends a finite closed subscheme $Z \subset X$ to the support of $Z$ counted with
    multiplicities.
    Fix another quasi-projective $k$-scheme $Y$ and consider the composition
    \[
        \rho_{X\times Y/X}:=\mathrm{pr}_1\circ\rho_{X \times Y}\colon \Hilb_d(X\times Y) \to \Sym^d(X \times Y)\to \Sym^d(X).
    \]
    For $x\in X$ a closed point, denote by $\Hilb_d(X \times Y, x)$ the fiber
    $\rho_{X\times Y/X}^{-1}(d[x])$. This is the scheme parametrizing subschemes of $X\times Y$ of degree $d$ supported on $\{x\}\times Y$.
    By construction \cite[Theorem~2.16, see~(2.34)]{Bertin__punctual_Hilbert_schemes}, for every
    quasi-projective $X$ and $Y$ the morphism $\rho_{X\times Y/X}$
    factors through $\hfflat_d(X)$.

    \begin{lem}\label{lem:fiberwiseChow}
        Let $\Gamma = \sum_{i=1}^r \mu_i [x_i]$ be a $k$-point of
        $\Sym^d(X)$, where $x_1, \ldots ,x_r\in X$ are nonsingular
        $k$-points.
        Then the fiber $\rho_{X\times Y/X}^{-1}(\Gamma)$ is isomorphic to
        $\prod_{i=1}^r \Hilb_{\mu_i}(\A^{\dim_{x_i} X}\times Y, 0)$.
    \end{lem}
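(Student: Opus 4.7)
The plan is to decompose any finite flat family in the fiber $\rho_{X\times Y/X}^{-1}(\Gamma)$ as a disjoint union of subfamilies indexed by the support points $x_1,\dots,x_r$, and then to identify each piece with $\Hilb_{\mu_i}(\A^{n_i}\times Y,0)$ by exploiting the étale local structure of $X$ at the nonsingular point $x_i$, where $n_i=\dim_{x_i}X$.

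First, I would construct the decomposition. Given a $T$-point of $\rho_{X\times Y/X}^{-1}(\Gamma)$, that is, a finite flat family $Z\subset X\times Y\times T$ of degree $d$ whose Hilbert--Chow image is constantly $\Gamma$, each set-theoretic fiber of $Z\to T$ is supported on $\{x_1,\dots,x_r\}\times Y$. Since the $x_i$ are distinct closed points of $X$, they admit pairwise disjoint Zariski open neighborhoods $U_i\subset X$, and the closed subschemes $Z_i:=Z\cap(U_i\times Y\times T)$ are pairwise disjoint, open-and-closed in $Z$, and cover it. Each $Z_i$ is finite flat over $T$, is supported on $\{x_i\}\times Y\times T$, and the constancy of the Hilbert--Chow image forces its degree over $T$ to be $\mu_i$. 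The inverse assignment $(Z_i)_i\mapsto\bigsqcup_i Z_i$ is manifestly functorial, yielding the isomorphism $\rho_{X\times Y/X}^{-1}(\Gamma)\simeq\prod_{i=1}^r\Hilb_{\mu_i}(X\times Y,x_i)$.

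Next, for each $i$ I would identify $\Hilb_{\mu_i}(X\times Y,x_i)$ with $\Hilb_{\mu_i}(\A^{n_i}\times Y,0)$. Since $x_i$ is a nonsingular $k$-rational point, a regular system of parameters $t_1,\dots,t_{n_i}\in\mathfrak{m}_{X,x_i}$ determines, after shrinking $X$ to a Zariski open $V\ni x_i$ on which the parameters are defined and on which $x_i$ is the unique preimage of $0$, an étale morphism $\phi\colon V\to\A^{n_i}$. Any family parametrized by $\Hilb_{\mu_i}(X\times Y,x_i)$ is supported on $\{x_i\}\times Y\times T$ and hence sits inside $V\times Y\times T$, so we may replace $X$ by $V$. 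Such a family is contained in some $N$-th infinitesimal neighborhood $\Spec(\mathcal{O}_{V,x_i}/\mathfrak{m}_{x_i}^N)\times Y\times T$, and $\phi$ induces an isomorphism between these infinitesimal neighborhoods and the corresponding neighborhoods of $\{0\}\times Y$ in $\A^{n_i}\times Y$. Assembling these identifications as $N$ grows produces the required isomorphism of moduli problems.

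The main obstacle I anticipate is making this final identification functorial in $T$ rather than merely checking it on geometric points. Concretely, one must verify that étale base change along $\phi$ sets up a natural bijection between $T$-families in $\A^{n_i}\times Y\times T$ supported on $\{0\}\times Y\times T$ and $T$-families in $V\times Y\times T$ supported on $\{x_i\}\times Y\times T$, uniformly in $T$. This comes down to the fact that $\phi$ induces simultaneous isomorphisms on all infinitesimal neighborhoods of the support, together with the observation that a finite flat family of degree $\mu_i$ is automatically annihilated by a bounded power of the ideal of $\{x_i\}\times Y\times T$ in the $V$-direction, so that the colimit over $N$ detects every such family.
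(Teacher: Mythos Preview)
Your overall strategy matches the paper's: decompose via disjoint open neighborhoods of the $x_i$ to reduce to $r=1$, then identify $\Hilb_{\mu}(X\times Y,x)$ with $\Hilb_{\mu}(\A^{n}\times Y,0)$ by passing through a common infinitesimal thickening of the point. The decomposition step is fine.

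The gap is in what you call an ``observation'': that a finite flat $T$-family in the fiber is annihilated by a bounded power of $\mathfrak m_{X,x}$. This does \emph{not} follow from set-theoretic support on the fibers. For instance, over $T=\Spec k[\epsilon]/(\epsilon^2)$ the subscheme $Z=V(x^2-\epsilon)\subset\A^1_T$ is degree $2$ with its unique geometric fiber supported at $0$, yet $x^2=\epsilon\neq 0$ on $Z$; correspondingly the map $T\to\Sym^2(\A^1)$ does \emph{not} factor through $2[0]$. So one must use the \emph{scheme-theoretic} condition that $T\to\Sym^d(X)$ is the constant map to $\Gamma$, not merely that each fiber lands there.

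The paper supplies exactly this argument. For $f\in\mathfrak m_{X,x}$, multiplication by $f$ on $\sO_Z$ is an $\sO_T$-linear endomorphism of a locally free module of rank $\mu$; the non-leading coefficients of its characteristic polynomial are computed by the composite
\[
T\to\Hilb_\mu(X\times Y)\xrightarrow{\rho}\Sym^\mu(X)\xrightarrow{\Sym^\mu(f)}\Sym^\mu(\A^1)\simeq\A^\mu,
\]
using the linearized-determinant description of the Hilbert--Chow morphism. Since $T\to\Sym^\mu(X)$ factors through $\mu[x]$ and $f(x)=0$, these coefficients vanish, and Cayley--Hamilton gives $f^\mu=0$ on $Z$. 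This yields a uniform bound (the paper takes the $e$-th thickening with $e=\mu\cdot\dim_x X$), after which your \'etale-coordinate argument---equivalently, the isomorphism $\sO_{X,x}/\mathfrak m^e\simeq\sO_{\A^n,0}/\mathfrak m^e$ at a smooth $k$-point---finishes the proof exactly as you outline.
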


    \begin{proof}
        First let us reduce to the case where $r=1$. Choose for every point $x_i$ an open neighborhood $U_i$ that does not contain the other points. Then for every $k$-scheme $T$ and a family $Z\subset X\times Y\times T$ corresponding to a $T$-point of the fiber we can write $Z$ as
        \[Z\simeq \coprod_i Z\times_X U_i\]
        where $Z\times_X U_i$ is supported on $\{x_i\}\times Y\times T$. Therefore
        \[\rho_{X\times Y/X}^{-1}(\Gamma) \simeq \prod_i \rho_{X\times Y/X}^{-1}(\mu_i [x_i]).\]

        For the rest of the proof we assume $\Gamma= d [x]$, so that the fiber is equal to $\Hilb_d(X\times Y,x)$. We need to prove
        \[\Hilb_d(X\times Y,x)\simeq \Hilb_d(\A^{\dim_xX}\times Y,0).\]
        To do so we will prove a stronger statement. Let $e = d\cdot \dim_x(X)$ and let $X_{x,e}=\Spec \sO_{X,x}/\mm_{X,x}^{e}$ be the $e$-thickening of $x\in X$. We will prove
        \[\Hilb_d(X\times Y,x)\simeq \Hilb_d(X_{x,e}\times Y,x).\]
        The claim then follows from the fact that $X_{x,e}\simeq \A^{\dim_xX}_{0,e}$ since $x$ is a nonsingular point of $X$. Without loss of generality we can replace $X$ by an open neighborhood of $x$ and we thus assume $X$ is affine.

        Summing up, we need to show that for every $k$-scheme $T$ and every $T$-family $Z\subset X\times Y\times T$ of degree $d$ supported on $\{x\}\times Y\times T$, the subscheme $Z$ in fact lies in $X_{x,e}\times Y\times T$.  It is enough to show that the $d$-th power of every $f\in \mm_{X,x}$ vanishes when pulled back to $Z$. Shrinking $X$ further if necessary, we may view such an $f$ as a map $f\colon X\to \A^1$ with $f(x)=0$. Consider the composition
        \[f' = \Sym^d(f) \circ \rho_{X\times Y/X} \colon \Hilb_d(X\times Y)\to \Sym^d(X)\to \Sym^d(\A^1) \simeq \A^d,\]
        where the last isomorphism is given by elementary symmetric polynomials and maps $d[0]$ to $0\in \A^d$.
        Since $f(x)=0$, the map $f'$ restricted to $\Hilb_d(X\times Y,x)$ is zero.
        
        Let $T\to \Hilb_d(X\times Y)$ be a map classifying a family $Z\subset X\times Y\times T$. By the description of the Hilbert--Chow morphism in terms of linearized determinants, see~\cite[Corollary~2.18]{Bertin__punctual_Hilbert_schemes} and~\cite[2.4, p.9]{Iversen__linear_determinants}, the restriction of $f'$ to $T$ computes exactly the non-leading coefficients of the characteristic polynomial of the $\sO_T$-linear endomorphism of $\sO_Z$ given by multiplication by $f\in\sO(X)$. Applying this to a family $Z$ in $\Hilb_d(X\times Y,x)$, we see that all these coefficients are $0$ and so $f^d=0$ on $Z$ by the Cayley--Hamilton theorem.
    \end{proof}

    Let $\Hilb_{d}(\A^n\times \A^{\infty}, 0)$ be the ind-scheme $\colim_{m}\Hilb_d(\A^n\times\A^{m}, 0)$. We deduce from Lemma~\ref{lem:fiberwiseChow} that for a smooth quasi-projective $n$-dimensional $k$-scheme $X$, the fiber of
    \begin{equation} \label{eq:hilbert-chow}
    \rho_{\A^\infty_X/X}\colon \Hilb_d(X\times \A^\infty) \to \Sym^d(X)
    \end{equation} over a $k$-point $\sum_{i=1}^r \mu_i [x_i]$ is isomorphic to $\prod_{i=1}^r \Hilb_{\mu_i}(\A^n\times\A^{\infty}, 0)$.

    The
    restrictions of the forgetful maps $\Hilb_d(\A^{n}\times\A^m)\to \FFLAT_d$ induce
    a forgetful map
    \[
        \Hilb_d(\A^n\times\A^{\infty}, 0)\to \FFLAT_d.
    \]
    \begin{lem}\label{lem:punctualHilb}
        The forgetful map $\Hilb_d(\A^n\times \A^{\infty}, 0)\to \FFLAT_d$ is an $\A^1$-equivalence on affine schemes.
    \end{lem}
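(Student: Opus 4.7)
My plan is to factor the forgetful map as
\[
\Hilb_d(\A^n\times\A^\infty, 0) \longrightarrow \hfflat_d(\A^n, 0) \longrightarrow \FFLAT_d,
\]
where $\hfflat_d(\A^n, 0) := \hfflat_d(\A^n)\times_{\Sym^d(\A^n)}\Spec\Z$ is the scheme-theoretic fiber over $d[0]$, using that the Hilbert--Chow morphism factors as $\Hilb_d(\A^n\times\A^\infty)\to \hfflat_d(\A^n)\to \Sym^d(\A^n)$ (as noted just before Lemma~\ref{lem:fiberwiseChow}). Concretely, a $U$-point of $\hfflat_d(\A^n, 0)$ is a pair $(B, (f_i)_{i=1}^n)$ where $B$ is a finite locally free $\sO(U)$-algebra of rank $d$ and each $f_i \in B$, viewed as an $\sO(U)$-linear endomorphism of $B$ via multiplication, has characteristic polynomial $T^d$. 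The factorization also yields the pullback identification $\Hilb_d(\A^n\times\A^\infty, 0) \simeq \Hilb_d(\A^n\times\A^\infty)\times_{\hfflat_d(\A^n)}\hfflat_d(\A^n, 0)$, exhibiting the first arrow above as a base change of $\Hilb_d(\A^n\times\A^\infty)\to\hfflat_d(\A^n)$. Since the latter is a universal $\A^1$-equivalence on affine schemes by Proposition~\ref{prop:h^fflat via Hilb}\eqref{it:potencialInfinity} (applied with $X=\A^n$ and $S=\Spec\Z$), so is the first arrow.

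For the second arrow, I will construct an explicit $\A^1$-homotopy equivalence. There is a section $s\colon \FFLAT_d \to \hfflat_d(\A^n, 0)$ of the forgetful map given by $B \mapsto (B, 0, \ldots, 0)$, and the scaling $H(t, (B, f_i)) = (B, tf_i)$ provides an $\A^1$-homotopy from $s$ composed with the forgetful map (at $t=0$) to the identity of $\hfflat_d(\A^n, 0)$ (at $t=1$). This is well-defined because the characteristic polynomial of multiplication by $tf_i$ on $B\otimes_{\sO(U)}\sO(U)[t]$ equals
\[
T^d + t\,c_1(f_i)\,T^{d-1} + \cdots + t^d\,c_d(f_i),
\]
where $c_k(f_i)$ denotes the $k$-th coefficient of the characteristic polynomial of multiplication by $f_i$ on $B$. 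Since the fiber condition defining $\hfflat_d(\A^n, 0)(U)$ is precisely that all $c_k(f_i)$ vanish, the scaled tuple $(tf_i)$ still satisfies the condition for every $t$. Composing the two $\A^1$-equivalences yields the lemma.

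The main subtle point is identifying $\hfflat_d(\A^n, 0)$ concretely in terms of characteristic polynomials and verifying the pullback description of $\Hilb_d(\A^n\times\A^\infty, 0)$; both follow from the explicit form of the Hilbert--Chow morphism via norms, as already exploited in the proof of Lemma~\ref{lem:fiberwiseChow}. Once these are in hand, the whole argument reduces to a base change of Proposition~\ref{prop:h^fflat via Hilb} together with the scaling homotopy.
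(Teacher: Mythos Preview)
Your proof is correct and follows essentially the same approach as the paper: factor through $\hfflat_d(\A^n,0)$, use base change of Proposition~\ref{prop:h^fflat via Hilb}\eqref{it:potencialInfinity} for the first map, and contract the second map via the scaling action of $(\A^1,\cdot)$ on $\A^n$ with the constant-zero section as inverse. Your extra verification that the characteristic-polynomial condition is preserved under scaling is a welcome elaboration of what the paper leaves implicit.
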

	 
    \begin{proof}
        As remarked before Lemma~\ref{lem:fiberwiseChow}, the map $\rho_{\A^n\times \A^\infty/\A^n}$ factors through
        $\hfflat_d(\A^n)$.
        Denote by $\hfflat_d(\A^n, 0)\hook \hfflat_d(\A^n)$ the
        fiber over $d[0]\in \Sym^d(\A^n)$ of the induced map. By Proposition~\ref{prop:h^fflat via Hilb}(\ref{it:potencialInfinity})
        we know that \[\Hilb_d(\A^n\times\A^{\infty}, 0)\to
        \hfflat_d(\A^n,0)\] is an $\A^1$-equivalence on affine schemes. It remains to prove that the forgetful map
        $\alpha'\colon\hfflat_d(\A^n, 0)\to
        \FFLAT_d$ is an $\A^1$-equivalence.
        The map $\alpha'$ has a natural section
        $\beta'\colon\FFLAT_d\to \hfflat(\A^n, 0)$ that equips a family with
        a constant map to $0\in \A^n$.
        The monoid $(\A^1, \cdot)$ acts on $\A^n$
        and so induces a map
        \begin{equation}\label{eq:HilbChowhomotopy}
            \A^1 \times \hfflat_d(\A^n)\to \hfflat_d(\A^n)
        \end{equation}
        that sends $(t, U\leftarrow Z\xrightarrow{\varphi} \A^n)$ to $U\leftarrow
        Z\xrightarrow{t\varphi} \A^n$. Since $d[0]$ is a $\A^1$-fixed point, the
        map~\eqref{eq:HilbChowhomotopy} restricts to a map $\A^1 \times
        \hfflat_d(\A^n, 0)\to \hfflat_d(\A^n, 0)$ which gives an
        $\A^1$-homotopy $\beta'\alpha' \simeq \id_{\FFLAT_d}$. This shows that $\beta'$
        is an inverse of $\alpha'$ up to $\A^1$-homotopy.
    \end{proof}

    \begin{cor}\label{cor:be like segal}
     Let $X$ be a smooth quasi-projective $k$-scheme. Then the fiber of the map
     \[\rho_{\A^\infty_X/X}\colon\Hilb(\A^\infty_X)\to \Sym(X)\]
     over a $k$-point $\sum_i d_i[x_i] \in \Sym (X)$ is $\A^1$-equivalent on affine schemes to $\prod_i\FFLAT_{d_i}$, and hence also to $\prod_i\Vect_{d_i-1}$.
    \end{cor}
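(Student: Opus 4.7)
The plan is to chain three ingredients already at our disposal. First, for each finite $m$, \lemref{lem:fiberwiseChow} applied with $Y=\A^m$ identifies the fiber of $\rho_{\A^m_X/X}\colon \Hilb_d(\A^m_X)\to \Sym^d(X)$ over $\sum_i d_i[x_i]$ with $\prod_i \Hilb_{d_i}(\A^{n_i}\times\A^m,0)$, where $n_i=\dim_{x_i}X$ is the dimension of the component of $X$ containing $x_i$ (well-defined since $X$ is smooth). I would then pass to the filtered colimit $m\to\infty$ and, using that filtered colimits in presheaves of spaces commute with finite limits, conclude that the fiber of $\rho_{\A^\infty_X/X}$ over $\sum_i d_i[x_i]$ is $\prod_i \Hilb_{d_i}(\A^{n_i}\times\A^\infty,0)$.

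Second, \lemref{lem:punctualHilb} provides, for each $i$, an $\A^1$-equivalence on affine schemes $\Hilb_{d_i}(\A^{n_i}\times\A^\infty,0)\to \FFLAT_{d_i}$. Since the index set is finite and $\Lhtp$ commutes with finite products on presheaves of spaces (a consequence of the fact that geometric realization preserves finite products), taking the product over $i$ yields an $\A^1$-equivalence $\prod_i \Hilb_{d_i}(\A^{n_i}\times\A^\infty,0)\to \prod_i \FFLAT_{d_i}$ on affine schemes, which is the first half of the claim.

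Finally, after discarding any summands with $d_i=0$, \thmref{thm:stack-equivalence} supplies $\A^1$-equivalences $\FFLAT_{d_i}\simeq \Vect_{d_i-1}$ for each remaining $i$, and their finite product yields the second $\A^1$-equivalence stated in the corollary.

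The only slightly nontrivial point in this plan is the interchange of the colimit $m\to\infty$ with the formation of the fiber. This is a formal consequence of filtered colimits commuting with finite limits in presheaves of spaces; it is in effect the same interchange that justifies the definition of $\Hilb_d(\A^n\times\A^\infty,0)$ as $\colim_m \Hilb_d(\A^n\times\A^m,0)$ used in \lemref{lem:punctualHilb}, so I do not foresee any genuine obstacle.
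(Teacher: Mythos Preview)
Your proposal is correct and follows essentially the same route as the paper, which simply says the corollary follows from the combination of \lemref{lem:fiberwiseChow}, \lemref{lem:punctualHilb}, and \thmref{thm:stack-equivalence}. You have spelled out the details more carefully, in particular the interchange of the filtered colimit over $m$ with the fiber (which the paper records in the paragraph just before \lemref{lem:punctualHilb}) and the compatibility of finite products with $\Lhtp$.
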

    \begin{proof}
     This follows from the combination of Lemma~\ref{lem:fiberwiseChow}, Lemma~\ref{lem:punctualHilb}, and Theorem~\ref{thm:stack-equivalence}.
    \end{proof}

\section{The effective motivic K-theory spectrum}
\label{sec:kgl}

Fix a base scheme $S$.
Let $\Span^\fflat(\Sch_S)$ be the $(2,1)$-category whose objects are $S$-schemes and whose morphisms are spans $X\leftarrow Z\to Y$ with $Z\to X$ finite locally free.
It is semiadditive and has a symmetric monoidal structure given by the cartesian product of $S$-schemes. We equip the $\infty$-category $\Pre_\Sigma(\Span^\fflat(\Sch_S))$ with the induced symmetric monoidal structure given by Day convolution.
Recall that commutative monoids for the Day convolution are precisely right-lax symmetric monoidal functors \cite[Proposition 2.12]{Saul}.
The unit of this symmetric monoidal structure is the presheaf $\FFLAT_S$ (the restriction of $\FFLAT$ to $\Sch_S$), which therefore has a unique structure of commutative monoid.

We claim that the presheaf $\Vect_S$ also has a structure of commutative monoid in $\Pre_\Sigma(\Span^\fflat(\Sch_S))$, with transfers given by the pushforward of finite locally free sheaves and multiplication given by the tensor product.
To see this, we apply the symmetric monoidal unfurling construction of \cite[Corollary 7.8.1]{BarwickGlasmanShah} to the category of finite locally free sheaves fibered over $\Sch_S^\op$ to obtain a right-lax symmetric monoidal functor
$\Vect_S\colon \Span^\fflat(\Sch_S)^\op \to \Spc$.\footnote{The unfurling construction in \emph{loc.\ cit.}\ requires a symmetric monoidal Waldhausen bicartesian fibration over $(\Sch_S^\op,\fflat)$ as input. The symmetric monoidal bicartesian fibration is here the fibered category of finite locally free sheaves with the external tensor product. One can equip it with the minimal Waldhausen structure, whose cofibrations are summand inclusions of locally free sheaves, but in fact the Waldhausen structure is not used to construct the unfurling.}
The forgetful map $\eta\colon \FFLAT_S\to \Vect_S$ can then be promoted to a morphism of commutative monoids in $\Pre_\Sigma(\Span^\fflat(\Sch_S))$, namely the unique such morphism.

Let $\Span^\fr(\Sch_S)$ be the symmetric monoidal $\infty$-category of framed correspondences constructed in \cite[\sectsign 4]{deloop1}. Its objects are $S$-schemes and its morphisms are spans
\[
\begin{tikzcd}
   & Z \ar[swap]{ld}{f}\ar{rd}{g} & \\
  X &   & Y\rlap,
\end{tikzcd}
\]
where $f$ is finite syntomic and equipped with a trivialization of its cotangent complex $\sL_f$ in the $K$-theory space $K(Z)$. There is a symmetric monoidal forgetful functor $\Span^\fr(\Sch_S)\to\Span^\fflat(\Sch_S)$ \cite[4.3.15]{deloop1}, allowing us to regard $\eta\colon\FFLAT_S\to \Vect_S$ as a morphism of commutative monoids in $\Pre_\Sigma(\Span^\fr(\Sch_S))$. Below we also denote by $\FFLAT_S$ and $\Vect_S$ the restriction of these presheaves to $\Span^\fr(\Sm_S)$, which is the full subcategory of $\Span^\fr(\Sm_S)$ spanned by the smooth $S$-schemes.

Recall that any presheaf with framed transfers on $\Sm_S$ gives rise to a motivic spectrum via the functor
\[
\Sigma^\infty_{\T,\fr}\colon \Pre_\Sigma(\Span^\fr(\Sm_S)) \to \SH^\fr(S)\simeq\SH(S),
\]
where the equivalence is \cite[Theorem 18]{framed-loc}.

Let $\beta\colon (\P^1,\infty) \to \Vect^\gp$ be the morphism corresponding to the formal difference of locally free sheaves $\sO_{\P^1}-\sO_{\P^1}(-1)$. For any scheme $S$, $\beta$ induces a canonical element in $\pi_{2,1}(\Sigma^\infty_{\T,\fr}\Vect_S)$.
Following \cite[Section 3]{HoyoisCdh}, we shall say that a $\Vect_S^\gp$-module $\sF$ in some presentable $\infty$-category with an action of $\Pre_\Sigma(\Span^\fr(\Sm_S))$ is \emph{$\beta$-periodic} if the morphism
\[
\sF\to \Omega_{\P^1} \sF
\]
induced by $\beta$ is an equivalence. For example, algebraic $K$-theory (as a pointed presheaf on $\Sm_S$) is $\beta$-periodic, by the projective bundle formula.

\begin{prop}\label{prop:KGL-Vect}
	For any scheme $S$, there is an equivalence of $\Einfty$-ring spectra
	\[
	\KGL_S\simeq (\Sigma^\infty_{\T,\fr}\Vect_S)[\beta^{-1}]
	\]
	in $\SH(S)\simeq\SH^\fr(S)$.
\end{prop}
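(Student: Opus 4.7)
The plan is to prove the proposition in two steps: first identify $\Sigma^\infty_{\T,\fr}\Vect_S$ with the effective motivic K-theory spectrum $\kgl_S$ as $\Einfty$-ring spectra, then apply the standard identification $\kgl_S[\beta^{-1}]\simeq\KGL_S$ coming from Bott periodicity in algebraic K-theory.

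For the first step, I would apply the framed recognition principle \cite[Theorem~3.5.12]{deloop1}, together with the equivalence $\SH^\fr(S)\simeq\SH(S)$ of \cite{framed-loc}. Combined, these identify $\Omega^\infty_\T\Sigma^\infty_{\T,\fr}\Vect_S$ with the motivic localization of the group completion $\Vect_S^\gp$, which on smooth schemes is algebraic K-theory (as used in the proof of Corollary~\ref{cor:Hilb=Gr} via \cite[Example~5.2]{deloop4}), and hence equals $\Omega^\infty_\T\kgl_S$. Since $\Sigma^\infty_{\T,\fr}\Vect_S$ is a priori effective, the equivalence at the level of infinite loop spaces upgrades to an equivalence of effective motivic spectra. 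The $\Einfty$-ring structure on $\Sigma^\infty_{\T,\fr}\Vect_S$ comes from the Day convolution on $\Pre_\Sigma(\Span^\fr(\Sm_S))$, arising from the symmetric monoidal unfurling of fiberwise tensor product of locally free sheaves; this matches the classical multiplicative structure on $\kgl_S$ because the equivalence $\SH^\fr(S)\simeq\SH(S)$ is symmetric monoidal and transports Day convolution to the smash product.

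For the second step, the projective bundle formula for $\KGL_S$ implies that the image of $\beta$ is a unit; more precisely, multiplication by $\beta$ implements the Bott equivalence $\KGL_S\simeq\Omega_{\P^1}\KGL_S$. Consequently, the canonical map $\kgl_S\to\KGL_S$ factors through $\kgl_S[\beta^{-1}]$, and this factorization is an equivalence because $\KGL_S$ is by construction the $\beta$-periodic extension of its effective cover $\kgl_S$. Combining the two steps yields the desired $\Einfty$-ring equivalence.

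The main obstacle is the ring-theoretic content of the first step. The recognition principle gives in the first place an equivalence of $\Einfty$-spaces, and promoting it to an equivalence of $\Einfty$-ring spectra requires tracing the symmetric monoidal structures carefully through the unfurling construction and the framed recognition functor. Once this compatibility is established, the Bott inversion step is formal and preserves the ring structure by the universal property of localization.
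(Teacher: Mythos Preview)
Your strategy has a genuine gap in the first step. The framed recognition principle you invoke, namely the identification $\Omega^\infty_\T\Sigma^\infty_{\T,\fr}X\simeq (L_\mot X)^\gp$, is only known over a perfect base field; it is \emph{not} available over an arbitrary scheme $S$, which is the generality claimed in the proposition. Consequently, your first step---establishing $\Sigma^\infty_{\T,\fr}\Vect_S\simeq\kgl_S$---cannot be carried out as stated. In fact, the paper proves exactly this identification as Corollary~\ref{cor:kgl-vect}, but only under the hypothesis that $S$ is regular over a field, and it does so by \emph{deducing} it from Proposition~\ref{prop:KGL-Vect}. So your argument reverses the paper's logical order and would be circular within that framework. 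A related issue: you write that $L_\mot\Vect_S^\gp$ is algebraic K-theory on smooth $S$-schemes, but over a general base this is only homotopy K-theory $KH$, which matters for the comparison with $\kgl_S$.

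The paper's proof bypasses the recognition principle entirely. It uses the theory of $\beta$-periodic $\Vect^\gp$-modules from \cite{HoyoisCdh}: the functors $\Omega^\infty_\T$ and $\Omega^\infty_{\T,\fr}$ restrict to symmetric monoidal equivalences between $\beta$-periodic modules in $\SH(S)$ (resp.\ $\SH^\fr(S)$) and in $\H(S)_*$ (resp.\ $\H^\fr(S)$). Under the unframed equivalence, $\KGL_S$ corresponds to $\Omega^\infty KH$, which is the $\beta$-periodization of $L_\mot\Vect^\gp$. One then checks that the forgetful functor $\H^\fr(S)\to\H(S)_*$ commutes with $\beta$-periodization (because $P_\beta$ is computed as a sequential colimit of $\Omega^n_\T$), so the framed and unframed $\beta$-periodizations of $L_\mot\Vect^\gp$ agree. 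This produces the equivalence directly, valid over any $S$, without ever needing to identify $\Sigma^\infty_{\T,\fr}\Vect_S$ before inverting $\beta$.
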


\begin{proof}
	By \cite[Proposition 3.2]{HoyoisCdh}, the functors $\Omega^\infty_\T$ and $\Omega^\infty_{\T,\fr}$ restrict to equivalences of symmetric monoidal $\infty$-categories of $\beta$-periodic modules
	\begin{gather*}
	P_\beta\Mod_{\Vect^\gp}(\H(S)_*) \simeq P_\beta\Mod_{\Vect^\gp}(\SH(S)),
	\\
	P_\beta\Mod_{\Vect^\gp}(\H^\fr(S)) \simeq P_\beta\Mod_{\Vect^\gp}(\SH^\fr(S)).
	\end{gather*}
	Under the former equivalence, the $\Einfty$-ring spectrum $\KGL_S$ corresponds to the pointed motivic space $\Omega^\infty KH$ \cite[Proposition 2.14]{Cisinski}, which is the $\beta$-periodization of $L_\mot\Vect^\gp$ \cite[Proposition 4.10]{HoyoisCdh}.
	
	For $\sF$ a $\Vect^\gp$-module in either $\H(S)_*$ or $\H^\fr(S)$, its $\beta$-periodization $P_\beta\sF$ is computed as a sequential colimit of the presheaves $\Omega^n_\T\sF$ \cite[Theorem 3.8 and Lemma 4.9]{HoyoisCdh}. Since the forgetful functor $\H^\fr(S)\to\H(S)_*$ commutes with sequential colimits and with $\Omega^n_\T$, the square
	\[
	\begin{tikzcd}
		\Mod_{\Vect^\gp}(\H^\fr(S)) \ar{r}{P_\beta} \ar{d} & P_\beta\Mod_{\Vect^\gp}(\SH^\fr(S)) \ar{d} \\
		\Mod_{\Vect^\gp}(\H(S)_*) \ar{r}{P_\beta} & P_\beta\Mod_{\Vect^\gp}(\SH(S))
	\end{tikzcd}
	\]
	of right-lax symmetric monoidal functors commutes. The commutativity of this square on the framed motivic space $L_\mot\Vect^\gp$ gives the desired equivalence.
\end{proof}

Let $\mathrm{kgl}_S\in\SH(S)^\veff$ denote the very effective cover of $\KGL_S$ \cite[Definition 5.5]{SpitzweckOstvaer}. We shall only consider the spectrum $\mathrm{kgl}_S$ when $S$ is regular over a field (i.e., regular and equicharacteristic), in which case it coincides with the effective cover of $\KGL_S$ (this is equivalent to the vanishing of the negative K-theory of fields, by the characterization of very effectivity in terms of homotopy sheaves \cite[\sectsign 3]{BachmannSlices}).

\begin{cor}\label{cor:kgl-vect}
	If $S$ is regular over a field, there is an equivalence of $\Einfty$-ring spectra
	\[
	\mathrm{kgl}_S\simeq \Sigma^\infty_{\T,\fr}\Vect_S
	\]
	in $\SH(S)\simeq\SH^\fr(S)$.
\end{cor}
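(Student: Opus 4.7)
The plan is to bootstrap from Proposition~\ref{prop:KGL-Vect} by identifying $\Sigma^\infty_{\T,\fr}\Vect_S$ as the very effective cover of $\KGL_S$. First, I would verify that $\Sigma^\infty_{\T,\fr}\Vect_S$ lies in $\SH(S)^\veff$: this holds because $\Vect_S$ is a presheaf of groupoids (hence of connective spaces) with framed transfers, and the framed suspension spectrum of such a presheaf is very effective, as follows from the framed recognition principle of~\cite{deloop1}. Granting this, the $\Einfty$-ring map $\Sigma^\infty_{\T,\fr}\Vect_S \to (\Sigma^\infty_{\T,\fr}\Vect_S)[\beta^{-1}] \simeq \KGL_S$ supplied by Proposition~\ref{prop:KGL-Vect} factors canonically through the very effective cover, yielding an $\Einfty$-ring map
\[
\phi\colon \Sigma^\infty_{\T,\fr}\Vect_S \to \kgl_S.
\]

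To show that $\phi$ is an equivalence, I would apply the framed recognition principle a second time: the functor $\Omega^\infty_{\T,\fr}$ is an equivalence from $\SH(S)^\veff$ onto the grouplike framed $\Einfty$-spaces in $\mathcal{H}^\fr(S)$, so it suffices to check that $\Omega^\infty_{\T,\fr}\phi$ is an equivalence of motivic spaces. The source identifies as $\Omega^\infty_{\T,\fr}\Sigma^\infty_{\T,\fr}\Vect_S \simeq \Lmot(\Vect_S)^\gp$ via the recognition principle itself, stabilization automatically group-completing the monoid. For the target, the very effective cover map $\kgl_S \to \KGL_S$ agrees with the identity on the homotopy sheaves $\pi_{n,0}$ for all $n\geq 0$, so the induced map $\Omega^\infty_{\T,\fr}\kgl_S \to \Omega^\infty_{\T,\fr}\KGL_S$ is an equivalence of motivic spaces; by the classical Morel--Voevodsky representability of algebraic K-theory on regular schemes (where $KH_S \simeq K_S$), this space is in turn $\Lmot(\Vect_S)^\gp$. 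Naturality of all these identifications forces $\Omega^\infty_{\T,\fr}\phi$ to be the identity of $\Lmot(\Vect_S)^\gp$, completing the argument.

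The main obstacle is the careful invocation of the framed recognition principle and the verification that the three comparison equivalences are compatible with the framed transfers structure and with the map $\phi$. The hypothesis that $S$ is regular over a field enters both through Morel--Voevodsky representability (ensuring $KH_S \simeq K_S$) and, as recalled in the paragraph preceding the corollary, through the coincidence of the very effective and effective covers of $\KGL_S$.
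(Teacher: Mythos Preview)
Your overall strategy is the same as the paper's, but there is a genuine gap: you invoke the framed motivic recognition principle over an arbitrary base $S$ that is regular over a field, whereas this principle (both the statement that $\Omega^\infty_{\T,\fr}$ restricts to an equivalence from $\SH(S)^\veff$ onto grouplike objects in $\H^\fr(S)$, and the identification $\Omega^\infty_{\T,\fr}\Sigma^\infty_{\T,\fr}X\simeq X^\gp$) is only established in \cite[Theorem~3.5.14]{deloop1} when $S$ is the spectrum of a \emph{perfect field}. Over more general bases neither statement is known, so your argument as written does not go through.

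The paper's proof fixes exactly this point before invoking the recognition principle: one first observes that the claim is Nisnevich-local on $S$, reduces to $S$ affine, and then uses Popescu's theorem to write $S$ as a cofiltered limit of smooth schemes over a perfect field $k$. Since pro-smooth base change preserves very effective covers (and $\Sigma^\infty_{\T,\fr}\Vect$), one is reduced to $S=\Spec k$, where your argument (equivalently, the paper's) applies. A minor additional remark: the very effectivity of $\Sigma^\infty_{\T,\fr}\Vect_S$ does not require the recognition principle; it holds over any base because $\Vect_S$ is a sifted colimit of representables and $\Sigma^\infty_{\T,\fr}$ of a smooth $S$-scheme is a suspension spectrum under the reconstruction equivalence $\SH^\fr(S)\simeq\SH(S)$.
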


\begin{proof}
	By Theorem~\ref{prop:KGL-Vect}, there is a canonical $\Einfty$-map $\Sigma^\infty_{\T,\fr}\Vect_S\to \kgl_S$. The assertion that it is an equivalence is local on $S$, so we may assume $S$ affine.
	By Popescu's theorem \cite[Tag 07GC]{stacks}, $S$ is then a cofiltered limit of smooth $k$-schemes for some perfect field $k$. Since pro-smooth base change preserves very effective covers, we can assume that $S$ is the spectrum of a perfect field $k$. In this case, by the motivic recognition principle \cite[Theorem 3.5.14]{deloop1}, the very effective cover of a motivic spectrum $E$ is $\Sigma^\infty_{\T,\fr}\Omega^\infty_{\T,\fr}(E)$, and for $X$ a framed motivic space we have $\Omega^\infty_{\T,\fr}\Sigma^\infty_{\T,\fr}X \simeq X^\gp$. Since $L_\mot\Vect^\gp$ is already $\beta$-periodic on regular schemes (as it agrees with the $K$-theory presheaf $K$), we deduce that $\Sigma^\infty_{\T,\fr}\Vect_k$ is the very effective cover of $(\Sigma^\infty_{\T,\fr}\Vect_k)[\beta^{-1}]$, so the corollary follows from Proposition~\ref{prop:KGL-Vect}.
\end{proof}

\begin{rem}
	By \cite[Proposition B.3]{norms}, Corollary~\ref{cor:kgl-vect} holds for a finite-dimensional scheme $S$ if and only if, for every $s\in S$, the canonical map $s^*(\mathrm{kgl}_S)\to \mathrm{kgl}_{\kappa(s)}$ is an equivalence. One expects this to be true at least if $S$ is smooth over a Dedekind scheme.
\end{rem}

\begin{thm}\label{thm:kgl-fflat}
If $S$ is regular over a field, there is an equivalence of $\Einfty$-ring spectra
\[
\mathrm{kgl}_S \simeq \Sigma^\infty_{\T,\fr} \FFLAT_S
\]
in $\SH(S)\simeq \SH^\fr(S)$.
\end{thm}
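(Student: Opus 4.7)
The plan is to derive Theorem~\ref{thm:kgl-fflat} as a direct combination of Corollary~\ref{cor:kgl-vect} and Theorem~\ref{thm:main-gp}, with the only nontrivial ingredient being the compatibility of $\Sigma^\infty_{\T,\fr}$ with group completion.

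First, by Corollary~\ref{cor:kgl-vect} there is an equivalence $\kgl_S \simeq \Sigma^\infty_{\T,\fr}\Vect_S$ of $\Einfty$-ring spectra. It therefore suffices to show that the morphism
\[
	\Sigma^\infty_{\T,\fr}(\eta)\colon \Sigma^\infty_{\T,\fr}\FFLAT_S \to \Sigma^\infty_{\T,\fr}\Vect_S
\]
is an equivalence of $\Einfty$-ring spectra in $\SH^\fr(S)\simeq \SH(S)$. Recall from the discussion before Proposition~\ref{prop:KGL-Vect} that $\eta$ is a morphism of commutative monoids in $\Pre_\Sigma(\Span^\fr(\Sm_S))$, so $\Sigma^\infty_{\T,\fr}(\eta)$ is automatically a morphism of $\Einfty$-ring spectra.

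Next, I would observe that since $\Sigma^\infty_{\T,\fr}$ lands in the stable (hence grouplike) $\infty$-category $\SH(S)$, every commutative monoid $M\in \Pre_\Sigma(\Span^\fr(\Sm_S))$ has the property that the canonical map $M\to M^\gp$ is sent to an equivalence by $\Sigma^\infty_{\T,\fr}$. Applying this naturality to $\eta$ gives a commutative square
\[
\begin{tikzcd}
	\Sigma^\infty_{\T,\fr}\FFLAT_S \ar{r}{\Sigma^\infty_{\T,\fr}(\eta)} \ar["\simeq"]{d} & \Sigma^\infty_{\T,\fr}\Vect_S \ar["\simeq"]{d} \\
	\Sigma^\infty_{\T,\fr}\FFLAT_S^\gp \ar{r}{\Sigma^\infty_{\T,\fr}(\eta^\gp)} & \Sigma^\infty_{\T,\fr}\Vect_S^\gp
\end{tikzcd}
\]
so it suffices to show that the bottom map is an equivalence.

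Finally, by Theorem~\ref{thm:main-gp}, $\eta^\gp$ is an $\A^1$-equivalence in $\Pre(\Sch)$, hence by the footnote convention in the introduction it is a motivic equivalence when restricted to $\Sm_S$. The functor $\Sigma^\infty_{\T,\fr}$ inverts motivic equivalences of underlying presheaves (this is built into its definition via $L_\mot$ on framed presheaves), so $\Sigma^\infty_{\T,\fr}(\eta^\gp)$ is an equivalence, completing the proof.

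The main point requiring care is the second step — promoting $\eta^\gp$ from a map of underlying presheaves to one of framed presheaves, and verifying that $\Sigma^\infty_{\T,\fr}$ respects group completion. The former is automatic because $\eta^\gp$ is obtained by applying the presheaf-level group completion to the morphism of commutative monoids $\eta$ in $\Pre_\Sigma(\Span^\fr(\Sm_S))$; the latter is a formal consequence of $\SH(S)$ being stable. Everything else is bookkeeping around the equivalence $\SH^\fr(S)\simeq \SH(S)$.
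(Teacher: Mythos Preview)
Your proposal is correct and is precisely the argument the paper has in mind: the paper's own proof is the single line ``Combine Corollary~\ref{cor:kgl-vect} and Theorem~\ref{thm:main-gp},'' and your write-up is just a careful unpacking of what that combination entails (promoting $\eta$ to framed presheaves, passing to group completions because $\SH(S)$ is stable, and using that $\Sigma^\infty_{\T,\fr}$ inverts motivic equivalences).
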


\begin{proof}
Combine Corollary~\ref{cor:kgl-vect} and Theorem~\ref{thm:main-gp}.
\end{proof}

\begin{rem}
	One can lift the Bott element $\beta\colon (\P^1,\infty)\to\Vect^\gp$ to $\FFLAT^\gp$ (for example, by taking the formal difference of square-zero extensions $\sO\oplus\sO - \sO\oplus\sO(-1)$). Combining Proposition~\ref{prop:KGL-Vect} and Theorem~\ref{thm:main-gp}, we then have an equivalence of $\Einfty$-ring spectra \[\KGL_S\simeq (\Sigma^\infty_{\T,\fr}\FFLAT_S)[\beta^{-1}]\] for any scheme $S$.
\end{rem}

	Let $\H^\fflat(S)\subset \Pre_\Sigma(\Span^\fflat(\Sm_S))$ be the full subcategory spanned by the $\A^1$-invariant Nisnevich sheaves, and let $\SH^\fflat(S)$ the $\infty$-category of $\T$-spectra in $\H^\fflat(S)_*$.
	Theorem~\ref{thm:kgl-fflat} formally leads to an adjunction
	\[
	\Mod_{\kgl}(\SH(S)) \rightleftarrows \SH^\fflat(S),
	\]
	where the left adjoint is symmetric monoidal. In \cite{BachmannFFlatCancellation}, Bachmann proves that this adjunction induces an equivalence
	\[
	\Mod_{\kgl}(\SH(k))[\tfrac 1e] \simeq \SH^\fflat(k)[\tfrac 1e],
	\]
	for $k$ a perfect field of exponential characteristic $e$~\cite[Corollary~4.3]{BachmannFFlatCancellation}. 
	
	The results in~\cite{BachmannFFlatCancellation} (which in turn depend on Theorem~\ref{thm:kgl-fflat}) have the following consequence.
	
	\begin{prop} \label{prop: kgl otimes X}
	Let $k$ be a perfect field of exponential characteristic $e$ and $X$ a smooth $k$-scheme. Then there is an equivalence
    \[\Omega^\infty_{\T} (\kgl \otimes \Sigma^\infty_{\T} X_+) [\tfrac 1e] \simeq L_\Zar\Lhtp \hfflat(X)^\gp [\tfrac 1e]. \]
	 If moreover $\kgl \otimes \Sigma^\infty_\T X_+$ is a dualizable $\kgl$-module (for example if $X$ is proper), there is an equivalence
	 \[\Omega^\infty_{\T} (\kgl \otimes \Sigma^\infty_{\T} X_+) \simeq L_\Zar\Lhtp \hfflat(X)^\gp. \]
	\end{prop}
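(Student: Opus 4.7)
\emph{Plan.} The natural approach is to use Bachmann's equivalence \cite[Corollary~4.3]{BachmannFFlatCancellation} to transport the question into $\SH^\fflat(k)$, where the right-hand side $\hfflat(X)^\gp$ appears naturally as the infinite loop space. By Theorem~\ref{thm:kgl-fflat}, the symmetric monoidal adjunction $\Sigma^\infty_{\T,\fflat}\colon \SH(k)\rightleftarrows \SH^\fflat(k)$ factors canonically through $\Mod_\kgl(\SH(k))$, and under Bachmann's equivalence the second factor becomes an equivalence after inverting $e$. It follows that the image of $\kgl\otimes \Sigma^\infty_\T X_+$ in $\SH^\fflat(k)[\tfrac 1e]$ is $\Sigma^\infty_{\T,\fflat}X_+[\tfrac 1e]$, so that applying $\Omega^\infty_\T$ in $\SH^\fflat(k)$ and forgetting flat transfers to $\Pre(\Sm_k)$ yields
\[
\Omega^\infty_\T(\kgl\otimes \Sigma^\infty_\T X_+)[\tfrac 1e] \simeq \Omega^\infty_{\T}\Sigma^\infty_{\T,\fflat}X_+[\tfrac 1e].
\]

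\emph{Identifying the right-hand side.} The second step is a flat analogue of the motivic recognition principle: the underlying presheaf of $\Omega^\infty_\T\Sigma^\infty_{\T,\fflat}X_+$ is $L_\Zar\Lhtp \hfflat(X)^\gp$. The natural map $\hfflat(X)^\gp\to \Omega^\infty_\T\Sigma^\infty_{\T,\fflat}X_+$ is the unit of the $\Sigma^\infty\dashv\Omega^\infty$ adjunction in $\SH^\fflat(k)$, and factors through the motivic localization since the target is motivically local. One expects this to be an equivalence by an argument parallel to the framed case \cite[Theorem~3.5.14]{deloop1}, but in the flat setting it should be derived from the framed recognition principle by means of Bachmann's cancellation theorem, since the forgetful functor $\Span^\fr(\Sm_k)\to\Span^\fflat(\Sm_k)$ connects the two calculations. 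The appearance of $L_\Zar$ rather than $L_\Nis$ reflects that finite flat transfers promote a Zariski sheaf to a Nisnevich sheaf after $\A^1$-invariantizing, in the spirit of \cite[Corollary~4.4]{deloop1}.

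\emph{Dualizable case and main obstacle.} When $X$ is smooth and proper, $\Sigma^\infty_\T X_+$ is dualizable in $\SH(k)$, hence $\kgl\otimes \Sigma^\infty_\T X_+$ is a dualizable $\kgl$-module. The upgrade from $[\tfrac 1e]$ to the integral statement follows because the functor $F\colon \Mod_\kgl(\SH(k))\to \SH^\fflat(k)$ should already be fully faithful on dualizable objects without inverting $e$ (a consequence of cancellation), so that the unit $\kgl\otimes \Sigma^\infty_\T X_+\to RF(\kgl\otimes \Sigma^\infty_\T X_+)$ is an integral equivalence in this case. The main technical obstacle is establishing the flat recognition principle cleanly: unlike the framed case, where the geometric setup of \cite{deloop1} applies directly, in the flat setting one has to route the argument through Bachmann's cancellation theorem and keep careful track of Zariski versus Nisnevich sheafification. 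Verifying that cancellation holds integrally on dualizable $\kgl$-modules is the other delicate point, which is needed for the second half of the statement.
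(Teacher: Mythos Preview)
Your proposal is correct and follows essentially the same route as the paper: transport along Bachmann's equivalence $\Mod_{\kgl}(\SH(k))[\tfrac 1e]\simeq \SH^\fflat(k)[\tfrac 1e]$, use the flat cancellation theorem to identify $\Omega^\infty_\T\Sigma^\infty_{\T,\fflat}X_+$ with $(L_\mot\hfflat(X))^\gp$, and reduce $L_\mot$ to $L_\Zar\Lhtp$ via the fact that grouplike presheaves with finite locally free transfers satisfy Nisnevich descent after Zariski--$\A^1$ localization; for the dualizable case, the paper's phrasing is that the unit of the adjunction is an equivalence on the unit object \cite[Theorem~4.2]{BachmannFFlatCancellation} and hence on all dualizable objects, which is exactly your ``fully faithful on dualizables'' claim. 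The only sharpening needed is to replace the tentative ``should be derived from the framed recognition principle'' with the direct citation: the equivalence $\sF^\gp\simeq \Omega^\infty_\T\Sigma^\infty_\T\sF$ in $\H^\fflat(k)$ is \cite[Theorem~3.5 and Proposition~2.13]{BachmannFFlatCancellation}, and $L_\mot\simeq L_\Zar\Lhtp$ on grouplike presheaves with flat transfers is \cite[Theorem~3.4.11]{deloop1}.
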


\begin{proof}
By the cancellation theorem for finite locally free correspondences \cite[Theorem~3.5 and Proposition~2.13]{BachmannFFlatCancellation}, for every $\sF\in\H^\fflat(k)$ there is a natural equivalence
\[\sF^\gp \simeq \Omega^\infty_{\T} \Sigma^\infty_{\T} \sF\]
in $\H^\fflat(k)$.
Moreover, if $\sF\in\Pre_\Sigma(\Span^\fflat(\Sm_k))^\gp$, then $L_\mot\sF \simeq L_\Zar\Lhtp\sF$ by \cite[Theorem 3.4.11]{deloop1}.
The first statement now follows from the equivalence of $\infty$-categories $\Mod_{\kgl}(\SH(k))[\tfrac 1e] \simeq\SH^\fflat(k)[\tfrac 1e]$ \cite[Corollary~4.3]{BachmannFFlatCancellation}. 
Without inverting $e$, the unit of the adjunction $\Mod_{\kgl}(\SH(k))\rightleftarrows\SH^\fflat(k)$ is an equivalence on the unit object \cite[Theorem 4.2]{BachmannFFlatCancellation}, hence on any dualizable object. This implies the second statement.
\end{proof}	

\begin{cor}\label{cor: kgl otimes X}
	Let $k$ be a perfect field and $X$ a smooth separated $k$-scheme. Suppose that $\kgl \otimes \Sigma^\infty_\T X_+$ is a dualizable $\kgl$-module (for example $X$ is proper). Then there is an equivalence
	 \[\Omega^\infty_{\T} (\kgl \otimes \Sigma^\infty_{\T} X_+) \simeq (L_\Zar\Lhtp \Hilb(\A^\infty_X))^\gp. \]
\end{cor}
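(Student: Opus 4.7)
The plan is to derive this corollary from \propref{prop: kgl otimes X} by replacing $\hfflat(X)^\gp$ with $\Hilb(\A^\infty_X)^\gp$. Since $X$ is smooth, separated, and $\kgl \otimes \Sigma^\infty_\T X_+$ is a dualizable $\kgl$-module by hypothesis, the second (un-inverted) equivalence of \propref{prop: kgl otimes X} applies and gives
\[
\Omega^\infty_{\T} (\kgl \otimes \Sigma^\infty_{\T} X_+) \simeq L_\Zar\Lhtp \hfflat(X)^\gp .
\]
So the task reduces to producing a natural equivalence $(L_\Zar\Lhtp \Hilb(\A^\infty_X))^\gp \simeq L_\Zar\Lhtp \hfflat(X)^\gp$.

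First I would invoke \propref{prop:h^fflat via Hilb}(\ref{it:potencialInfinity}), which is applicable since $X$ is separated, to get that the forgetful map $\Hilb(\A^\infty_X) \to \hfflat(X)$ is a universal $\A^1$-equivalence on affine schemes, and is moreover $\Einfty$-monoidal for the disjoint-union structures on both sides. Because Zariski sheafification on $\Sm_k$ is controlled by values on affine opens, an $\A^1$-equivalence on affine schemes becomes an equivalence after $L_\Zar\Lhtp$. This produces an equivalence $L_\Zar\Lhtp \Hilb(\A^\infty_X) \simeq L_\Zar\Lhtp \hfflat(X)$ of sheaves of $\Einfty$-monoids on $\Sm_k$.

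The remaining step is to commute group completion past $L_\Zar\Lhtp$. The $\A^1$-localization $\Lhtp$ commutes with group completion by \cite[Lemma~5.5]{HoyoisCdh}, as already used in the proof of \thmref{thm:main-gp}. Zariski sheafification, being a left exact left adjoint, lifts to $\Einfty$-monoid valued presheaves and commutes there with the left-adjoint $(-)^\gp$. Assembling these commutations with the equivalence from the previous paragraph gives the chain
\[
(L_\Zar\Lhtp \Hilb(\A^\infty_X))^\gp \simeq (L_\Zar\Lhtp \hfflat(X))^\gp \simeq L_\Zar\Lhtp (\hfflat(X)^\gp),
\]
which, combined with the opening equivalence, completes the proof.

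The main obstacle I expect is this last step: rigorously justifying, in the $\infty$-categorical setting, that $L_\Zar$ commutes with group completion on $\Einfty$-monoid valued presheaves. Although this is morally formal from left exactness of sheafification and general properties of reflective localizations, in practice it may require a brief verification or a precise citation. An alternative route is to stay inside $\Pre_\Sigma(\Span^\fflat(\Sm_k))$ and exploit the equivalence $L_\mot \simeq L_\Zar\Lhtp$ on group-complete presheaves with finite flat transfers, then transfer the resulting equivalence back to ordinary presheaves on $\Sm_k$.
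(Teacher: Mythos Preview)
Your proposal is correct and follows the same approach as the paper, which simply says ``Combine \propref{prop: kgl otimes X} and \propref{prop:h^fflat via Hilb}.'' You have spelled out the combination in detail, and in particular you have identified the one point the paper leaves implicit: commuting group completion past $L_\Zar$ (the commutation with $\Lhtp$ being handled by \cite[Lemma~5.5]{HoyoisCdh}). Your justification via left exactness of sheafification is standard, so this is not a gap in your argument.
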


\begin{proof}
	Combine Propositions~\ref{prop: kgl otimes X} and \ref{prop:h^fflat via Hilb}.
\end{proof}

    In light of Corollary~\ref{cor:be like segal}, we can think of Corollary~\ref{cor: kgl otimes X} as a geometric analogue of Segal's result~\cite[Proposition~1.1]{segal1977K-homology}, expressing $\mathrm{ko}$-homology of a topological space $X$ as the homotopy groups of the labeled configuration space $F(X)$, see Section~\ref{sec: A^infty times X}. Here $\mathrm{ko}$ is the connective cover of the real topological K-theory spectrum $\mathrm{KO}$.
    
\begin{rem} 
		 Let $k$ be a perfect field and $\HZ \in \SH(k)$ the motivic Eilenberg--Mac Lane spectrum over $k$. 
		 Since $s_0(\mathbf{1})\simeq s_0(\kgl)\simeq \HZ$ \cite{Levine:2008}, there is a \emph{unique} morphism of $\Einfty$-ring spectra $\kgl \to \HZ$.
 Let $\Cor(k)$ be Voevodsky's category of finite correspondences over $k$ \cite[Definition~1.5]{mvw}.
 By \cite[\sectsign 5.3]{deloop1}, there is a symmetric monoidal functor
 \begin{equation*}\label{eq:fflat->Cor}
 \Span^\fflat(\Sm_k^\mathrm{ft,sep})\to \Cor(k)
 \end{equation*}
 sending a span $X \xleftarrow{f} Z \xrightarrow{g} Y$ to the cycle $(f, g)_*[Z]$ on $X\times Y$. 
  It induces an adjunction
 \[
 \SH^\fflat(k)\rightleftarrows \DM(k)
 \]
 where the left adjoint is symmetric monoidal. Recall that $\HZ$ is by definition the image of the unit object of $\DM(k)$ by the forgetful functor $\DM(k)\to \SH(k)$. The unit of the above adjunction therefore induces the (unique) morphism of $\Einfty$-ring spectra $\kgl\to \HZ$, and we obtain a commutative square of adjunctions
 \[
 \begin{tikzcd}
	 \Mod_{\kgl}(\SH(k)) \ar[shift left=.5ex]{r} \ar[shift right=.5ex]{d} & \Mod_{\HZ}(\SH(k)) \ar[shift left=.5ex]{l} \ar[shift right=.5ex]{d} \\ 
 	\SH^\fflat(k) \ar[shift left=.5ex]{r} \ar[shift right=.5ex]{u} & \DM(k)\rlap. \ar[shift left=.5ex]{l} \ar[shift right=.5ex]{u}
 \end{tikzcd}
 \]
 The vertical adjunctions are known to be equivalences after inverting the exponential characteristic $e$ (see~\cite{Rondigs:2008} and~\cite[Theorem~5.8]{HoyoisKellyOstvaerSteenrodAtP}).
 For $X\in\Sm_k^\mathrm{ft,sep}$, the canonical map
 \begin{equation}\label{eq:kgl->HZ}
 \Omega^\infty_{\T} (\kgl \otimes \Sigma^\infty_{\T} X_+)\to \Omega^\infty_{\T} (\HZ \otimes \Sigma^\infty_{\T} X_+)
 \end{equation}
 can thus be identified, after inverting $e$ (or if $X$ is proper), with the motivic localization of the map
 \[
 \hfflat(X)^\gp \to h^\Cor(X),
 \]
 where $h^\Cor(X)$ is the presheaf represented by $X$ on $\Cor(k)$. 
 Furthermore, if $X$ is quasi-projective, there is a commutative diagram
 \[
 \begin{tikzcd}
 	\Hilb(\A^\infty_X) \ar{r}{\rho_{\A^\infty_X/X}} \ar{d} & \Sym(X)^\gp \\
	\hfflat(X) \ar{r} & h^\Cor(X) \ar{u}
 \end{tikzcd}
 \]
 where $\rho_{\A^\infty_X/X}$ is the Hilbert--Chow morphism \eqref{eq:hilbert-chow}, $\Hilb(\A^\infty_X)\to \hfflat(X)$ is a motivic equivalence (Proposition~\ref{prop:h^fflat via Hilb}), and $h^\Cor(X)\to\Sym(X)^\gp$ is an isomorphism after inverting $e$ \cite[Theorem 6.8]{Suslin:1996}. Altogether, the map~\eqref{eq:kgl->HZ} corresponds, up to inverting $e$, to the group completion of the Hilbert--Chow morphism $\rho_{\A^\infty_X/X}\colon \Hilb(\A^\infty_X) \to \Sym(X)$.
\end{rem}

\section{Comparison with algebraic cobordism}
\label{sec:MGL->KGL}

Let $\FSYN_S$ be the presheaf with framed transfers on $\Sm_S$ associating to $X$ the groupoid of finite syntomic $X$-schemes.
The forgetful map $\FSYN_S \to \Vect_S$ induces a map of $\Einfty$-ring spectra \[\Sigma^\infty_{\T,\fr} \FSYN_S \to \Sigma^\infty_{\T,\fr} \Vect_S.\] By~\cite[Theorem~3.4.1]{deloop3}, $\Sigma^\infty_{\T,\fr} \FSYN_S \simeq \MGL_S$ is the algebraic cobordism spectrum of Voevodsky. By Proposition~\ref{prop:KGL-Vect}, $(\Sigma^\infty_{\T,\fr} \Vect_S)[\beta^{-1}]$ is identified with  $\KGL_S$, so we get an $\Einfty$-map $\MGL_S \to \KGL_S$. The goal of this section is to prove that this map is an $\Einfty$-refinement of the usual orientation of $\KGL_S$. 

Let $\PMGL_S$ be the spectrum $\bigoplus_{n\in\Z}\Sigma^{2n,n}\MGL_S$ with the $\Einfty$-ring structure from \cite[Theorem 16.19]{norms}. The commutative monoid structure on $\PMGL_S$ in the homotopy category $\h\SH(S)$ is determined by that of $\MGL_S$, and it classifies \emph{periodic oriented ring spectra}, i.e., oriented ring spectra $E$ with a given unit in $\pi_{2,1}(E)$.

By \cite[Theorem 3.4.1]{deloop3}, there is an equivalence of ($\Z$-graded) $\Einfty$-ring spectra
\begin{equation}\label{eqn:PMGL}
(\Sigma^\infty_{\T,\fr}\FQSM_S)[u^{-1}] \simeq \PMGL_S,
\end{equation}
where $u\in \pi_{2,1}(\Sigma^\infty_{\T,\fr}\FQSM_S^1) \simeq \pi_{2,1}(\Sigma^{2,1}\MGL_S)$ is the suspension of the unit of $\MGL_S$. 
Here, $\FQSM_S(X)$ is the $\infty$-groupoid of finite quasi-smooth derived $X$-schemes, with the $\Einfty$-semiring structure given by the sum and the product of $X$-schemes. Recall that a morphism of derived schemes $f\colon Y\to X$ is \emph{quasi-smooth} if it locally of finite presentation with perfect cotangent complex $\sL_f$ of Tor-amplitude $\leq 1$, and \emph{finite} if the underlying morphism of classical schemes is finite. Recall also that $f\colon Y\to X$ is quasi-smooth if and only if, locally on $Y$, it is the vanishing locus of finitely many functions on a smooth $X$-scheme \cite[2.3.13]{KhanVCD}; in particular, quasi-smooth morphisms are locally of finite Tor-amplitude.

Under the equivalence~\eqref{eqn:PMGL}, Thom classes of vector bundles have the following geometric description.
If $\sE$ is a locally free sheaf of rank $n$ over a smooth $S$-scheme $X$, its Thom class in $\MGL^{2n,n}_X(\bV(\sE))$ is represented by the zero section $X\hook \bV(\sE)$ in $\FQSM_S^n(\bV(\sE))$. Indeed, this holds by construction of the equivalence $\Sigma^\infty_{\T,\fr}\FQSM_S^n\simeq \Sigma^{2n,n}\MGL_S$, cf.\ \cite[Construction 3.1.1]{deloop3}.

\begin{prop}\label{prop:KGL-orientation}
	Let $S$ be a scheme.
The $\Einfty$-map $\MGL_S\to\KGL_S$ induced by $\FSYN_S \to \Vect_S$ is the standard orientation of $\KGL_S$.
\end{prop}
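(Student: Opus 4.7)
The plan is to check that the two $\Einfty$-maps $\MGL_S\to\KGL_S$ induce the same Thom classes on vector bundles, by combining the geometric description of Thom classes in $\MGL$ recalled above with the Koszul resolution. First, I would extend the forgetful morphism $\FSYN_S\to\Vect_S$ to a morphism of $\Einfty$-semirings in $\Pre_\Sigma(\Span^\fr(\Sm_S))$
\[
\FQSM_S \longrightarrow \Vect_S^\gp, \qquad (p\colon Y\to X)\longmapsto p_*\sO_Y,
\]
using the derived direct image. This lands in $\Vect_S^\gp$ because a finite quasi-smooth morphism is proper and of finite Tor-amplitude, so $p_*\sO_Y$ is a perfect complex that defines a class in $K_0(X)\simeq\pi_0\Vect(X)^\gp$. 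Passing to framed suspension spectra and inverting the Bott classes $u$ and $\beta$ yields an $\Einfty$-refinement $\PMGL_S\to\KGL_S$ of the map in question.

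Next, I would compute the induced map on Thom classes. For a rank-$n$ vector bundle $\sE$ on a smooth $S$-scheme $X$, the $\MGL$-Thom class is represented by the zero section $i\colon X\hook\bV(\sE)$ viewed as an element of $\FQSM^n_S(\bV(\sE))$. The extension above sends this to the class $[i_*\sO_X]\in \KGL^{2n,n}(\Th(\sE))$, which by the Koszul resolution on $\bV(\sE)$ equals $\sum_k(-1)^k[\Lambda^k\sE^\vee]$. This is precisely the Thom class of $\sE$ in the standard orientation of $\KGL$; specializing to the tautological line bundle on $\P^\infty$ recovers the standard first Chern class $1-[\sO(1)]$.

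Finally, I would conclude using the universal property $\PMGL_S\simeq(\Sigma^\infty_{\T,\fr}\FQSM_S)[u^{-1}]$ of~\cite[Theorem~3.4.1]{deloop3}: an $\Einfty$-ring map $\PMGL_S\to\KGL_S$ is determined by the corresponding morphism of $\Einfty$-semirings $\FQSM_S\to\Omega^\infty_{\T,\fr}\KGL_S$. Both our map and the standard orientation correspond to the rule $p\mapsto [p_*\sO_Y]$, hence they coincide.

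The hard part will be identifying the standard orientation of $\KGL_S$ with the framed-transfer map $p\mapsto [p_*\sO_Y]$ in the first place: one must trace through the usual construction of the orientation (via the projective bundle formula on $\KGL$, or via pushforwards of structure sheaves along lci morphisms) and translate it into the framed-transfer language of this paper. Once this identification is available, the Koszul computation of Thom classes pins down both maps and closes the argument.
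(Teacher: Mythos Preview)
Your approach is essentially the paper's: extend through $\FQSM_S$ via the derived pushforward of structure sheaves, check that the zero section $X\hook\bV(\sE)$ is sent to the standard K-theoretic Thom class, and then invoke the universal property of $\PMGL_S$. Two small points are worth flagging.

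First, the natural target of $(p\colon Y\to X)\mapsto p_*\sO_Y$ is the stack of perfect complexes $\Perf_S$, not $\Vect_S^\gp$. The identification $\pi_0\Vect(X)^\gp\simeq K_0(X)$ you use is only correct after Nisnevich sheafification (or on affines); the paper handles this by routing through the commutative square
\[
\begin{tikzcd}
\FSYN_S \ar{r}\ar{d} & \FQSM_S \ar{d} \\
\Vect_S \ar{r} & \Perf_S
\end{tikzcd}
\]
and then observing that the $\Einfty$-map $\Vect_S\to L_\Nis\Vect_S^\gp\simeq K$ factors through $\Perf_S$. This is cleaner than trying to map $\FQSM_S$ into $\Vect_S^\gp$ directly.

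Second, the ``hard part'' you worry about at the end is not needed. The standard orientation of $\KGL_S$ is a datum in the homotopy category (a choice of Thom classes, equivalently a homotopy ring map from $\PMGL_S$), so once you have shown that your $\Einfty$-map $\PMGL_S\to\KGL_S$ sends the geometric Thom class $[X\hook\bV(\sE)]$ to $[\sO_X]\in K_0(\bV(\sE))$, you are done; there is no further identification of $\Einfty$-structures required. The paper simply records that ``this map preserves Thom classes by construction, i.e., it induces the standard periodic orientation of $\KGL_S$'' and stops there.
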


\begin{proof}
	By \cite[Theorem 6.1.3.2]{SAG}, the structure sheaf of any finite quasi-smooth $S$-scheme is a perfect $\mathcal O_S$-module, since quasi-smooth morphisms are locally of finite presentation and locally of finite Tor-amplitude.
		There is therefore a commutative square of commutative monoids in presheaves with framed transfers
		\[
		\begin{tikzcd}
		\FSYN_S \ar{r} \ar{d} & \FQSM_S \ar{d} \\
		\Vect_S \ar{r} & \Perf_S,
		\end{tikzcd}
		\]
		where $\Perf_S(X)$ is the $\infty$-groupoid of perfect complexes on $X$ and the right vertical morphism $\FQSM_S\to \Perf_S$ forgets the algebra structure. 
		By Proposition~\ref{prop:KGL-Vect}, there is an $\Einfty$-map of presheaves with framed transfers $\Vect_S\to \Omega^\infty_{\T,\fr}\KGL_S$, which factors through $L_\Nis\Vect_S^\gp$ (since Nisnevich sheafification is compatible with framed transfers, see \cite[Proposition 3.2.4]{deloop1}). 
		On the other hand, the algebraic $K$-theory presheaf $K$ has finite locally free transfers induced by the pushforward functors between the $\infty$-categories of perfect complexes, which extend the pushforwards of vector bundles. Since $\Vect^\gp\to K$ is a Nisnevich-local equivalence, we deduce that the $\Einfty$-map $\Vect_S\to L_\Nis\Vect_S^\gp$ factors through $\Perf_S$ in $\Pre_\Sigma(\Span^\fr(\Sm_S))$. Hence, the composite $\FSYN_S\to\Vect_S\to \Omega^\infty_{\T,\fr}\KGL_S$ factors through $\FQSM_S$. By adjunction and \cite[Theorem 3.4.1]{deloop3}, we obtain a sequence of $\Einfty$-maps
	 \[\MGL_S\to \bigoplus_{n\geq 0}\Sigma^{2n,n}\MGL_S\to \KGL_S.\]
	 Recall that the standard periodic orientation of $\KGL_S$ is such that the Thom class of a finite locally free sheaf $\sE$ over $X$ is represented by the perfect $\sO_{\bV(\sE)}$-module $\sO_X$, which is precisely the image of the zero section $X\hook \bV(\sE)$ by the forgetful map $\FQSM_S(\bV(\sE))\to \Perf_S(\bV(\sE))$. Hence, the map $\Sigma^{2n,n}\MGL_S\to\KGL_S$ constructed above preserves Thom classes of locally free sheaves of rank $n$. In particular, $\Sigma^{2,1}\MGL_S\to\KGL_S$ sends $u$ to the Bott element $\beta$, which is the Thom class of $\sO_S$, and we obtain an induced $\Einfty$-map
	 \[
	 \PMGL_S\to\KGL_S.
	 \]
	This map preserves Thom classes by construction, i.e., it induces the standard periodic orientation of $\KGL_S$.
\end{proof}

\begin{rem}
	For $S=\Spec \bC$, the $\Einfty$-map $\PMGL_S\to \KGL_S$ constructed above realizes to the folklore $\Einfty$-map $MUP \to KU$ alluded to in \cite[page 3]{HahnYuan}. In particular, it differs from the Gepner--Snaith $\Einfty$-map $\PMGL_S\to\KGL_S$ constructed in \cite{GepnerSnaith}, which uses a different $\Einfty$-ring structure on $\PMGL_S$, see~\cite[Remark 16.20]{norms}.
\end{rem}

\begin{cor}\label{cor:KGL-orientation}
	Suppose that $S$ is regular over a field. Under the equivalence $\kgl_S\simeq\Sigma^\infty_{\T,\fr}\FFLAT_S$ of Theorem~\ref{thm:kgl-fflat}, the standard orientation $\MGL_S\to \mathrm{kgl}_S$ is $\Sigma^\infty_{\T,\fr}$ of the forgetful map $\FSYN_S\to \FFLAT_S$.
\end{cor}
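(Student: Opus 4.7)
The plan is to reduce the corollary to Proposition~\ref{prop:KGL-orientation} via an obvious factorization, together with the universal property of the very effective cover. Since every finite syntomic algebra is finite locally free, the forgetful map $\FSYN_S\to \Vect_S$ of presheaves with framed transfers factors tautologically as
\[
\FSYN_S \to \FFLAT_S \to \Vect_S,
\]
and this factorization is compatible with the $\Einfty$-semiring structures. Applying the symmetric monoidal functor $\Sigma^\infty_{\T,\fr}$ and invoking \cite[Theorem~3.4.1]{deloop3}, Theorem~\ref{thm:kgl-fflat}, and Corollary~\ref{cor:kgl-vect}, we obtain a commutative triangle of $\Einfty$-ring spectra in $\SH(S)$:
\[
\begin{tikzcd}
\MGL_S \ar{r}\ar[swap]{rd} & \Sigma^\infty_{\T,\fr}\FFLAT_S \ar{d}{\simeq} \\
& \Sigma^\infty_{\T,\fr}\Vect_S\rlap.
\end{tikzcd}
\]
Here the top horizontal arrow is the map of interest (the one given by the equivalence of Theorem~\ref{thm:kgl-fflat}), and both the diagonal and vertical targets are identified with $\mathrm{kgl}_S$.

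Post-composing with the canonical $\Einfty$-map $\mathrm{kgl}_S\simeq \Sigma^\infty_{\T,\fr}\Vect_S\to (\Sigma^\infty_{\T,\fr}\Vect_S)[\beta^{-1}]\simeq\KGL_S$, the diagonal becomes the $\Einfty$-map $\MGL_S\to \KGL_S$ induced by $\FSYN_S\to \Vect_S$, which is the standard orientation by Proposition~\ref{prop:KGL-orientation}. Hence the top horizontal arrow is an $\Einfty$-lift of the standard orientation through $\mathrm{kgl}_S\to\KGL_S$.

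To conclude, it suffices to observe that such a lift is unique. Since $S$ is regular over a field, $\mathrm{kgl}_S$ coincides with the effective, equivalently very effective, cover of $\KGL_S$ (as noted before Corollary~\ref{cor:kgl-vect}). On the other hand, $\MGL_S\simeq\Sigma^\infty_{\T,\fr}\FSYN_S$ is very effective, as the framed suspension spectrum of a presheaf with framed transfers. The universal property of the very effective cover therefore yields an equivalence
\[
\Map_{\CAlg(\SH(S))}(\MGL_S,\mathrm{kgl}_S) \isoto \Map_{\CAlg(\SH(S))}(\MGL_S,\KGL_S),
\]
so the lift is unique as an $\Einfty$-ring map and must agree with the standard orientation $\MGL_S\to \mathrm{kgl}_S$.

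The only mildly subtle point is bookkeeping: one has to unwind that the equivalence $\Sigma^\infty_{\T,\fr}\FFLAT_S\simeq \mathrm{kgl}_S$ of Theorem~\ref{thm:kgl-fflat} really is the composite $\Sigma^\infty_{\T,\fr}\FFLAT_S\to \Sigma^\infty_{\T,\fr}\Vect_S\isoto \mathrm{kgl}_S$ induced by $\FFLAT_S\to \Vect_S$ together with Corollary~\ref{cor:kgl-vect}. This is precisely the construction used in the proof of Theorem~\ref{thm:kgl-fflat}, so the triangle above commutes on the nose and no further verification is needed.
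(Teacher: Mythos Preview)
Your argument is correct and is precisely the reasoning the paper leaves implicit (the corollary is stated without proof, as an immediate consequence of Proposition~\ref{prop:KGL-orientation}). The factorization $\FSYN_S\to\FFLAT_S\to\Vect_S$, the identification of Theorem~\ref{thm:kgl-fflat} as $\Sigma^\infty_{\T,\fr}$ applied to $\FFLAT_S\to\Vect_S$ followed by Corollary~\ref{cor:kgl-vect}, and the uniqueness of lifts through the very effective cover are exactly the ingredients intended.

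One minor remark: the uniqueness step already holds at the level of $\SH(S)$, since $\Map_{\SH(S)}(\MGL_S,\kgl_S)\to\Map_{\SH(S)}(\MGL_S,\KGL_S)$ is an equivalence by the very definition of the very effective cover as a coreflection. Your stronger claim at the level of $\CAlg(\SH(S))$ is also true (because $\SH(S)^{\veff}\subset\SH(S)$ is closed under tensor products, so the coreflection is lax symmetric monoidal and lifts to commutative algebras), but it is not needed to conclude, and you might save yourself the extra justification by working in $\SH(S)$ directly.
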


\section{The Hilbert scheme of finite lci schemes of degree 3}\label{sec:deg 3}

The open subset of lci subschemes in the Hilbert scheme
has a rich homotopy type. In particular, in positive characteristic,
the limiting space $\Hilb_\infty^\lci(\A^\infty)$ has
the $\A^1$-homotopy type of the unit component of
the infinite loop space associated
to algebraic cobordism,
$\Omega^\infty_\T \MGL$ \cite[Theorem 1.2]{deloop4}. In characteristic
zero, these two spaces at least have isomorphic motives
in the derived category of motives,
$\DM(k)$. Hopkins conjectured that the motive of
$\Omega^\infty_\T \MGL$ is pure Tate, but that remains
open. By definition, a pure Tate motive is a direct sum
of the motives $\Z\{a\}:=\Z(a)[2a]$ for integers $a$.

A possible approach to Hopkins' conjecture is through the space
$\Hilb_\infty^\lci(\A^\infty)$. In that direction, we have
the following result.

\begin{thm}\label{thm:degree3}
The motive of $\Hilb_3^{\lci}(\A^\infty)$ over $\Z$ is pure Tate in $\DM(\Z)$.
The Chow ring
of $\Hilb_3^{\lci}(\A^\infty)$ is
\[\Z[c_1,c_2]/(c_2(-2c_1^2 + 9c_2)),\]
where $|c_i|=i$.
\end{thm}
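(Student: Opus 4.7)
The plan is to apply the Gysin localization triangle in $\DM(\Z)$ for the open immersion $\Hilb_3^{\lci}(\A^\infty) \hookrightarrow \Hilb_3(\A^\infty)$. The non-lci complement is easy to describe: since $k[x,y]/(x,y)^2$ is the unique non-lci local algebra of length $3$, a non-lci degree-$3$ subscheme of $\A^\infty$ is necessarily supported at a single point $p$ and isomorphic to a square-zero extension $\sO_T \oplus V$ with $V$ rank $2$. Classifying such data by the pair (support, tangent $2$-plane in $T_p\A^\infty$) gives an isomorphism
\[
\Hilb_3(\A^\infty) \setminus \Hilb_3^{\lci}(\A^\infty) \simeq \A^\infty \times \Gr_2(\A^\infty).
\]
By Corollary~\ref{cor:Gr-Hilb} together with $\A^1$-contractibility of $\A^\infty$, both the ambient Hilbert scheme and its non-lci stratum are $\A^1$-equivalent to $\Gr_2(\A^\infty) = BGL_2$, with Chow ring $\Z[c_1,c_2]$ and pure Tate motive.

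The heart of the argument is the computation of the rank-$4$ normal bundle $N$ of this closed embedding. At a fat square point $Z = \Spec k[x,y]/(x,y)^2$ at the origin of $\A^n$ with tangent plane $V$ and transverse space $U = T_0\A^n/V$, I would compute $\Hom_{\sO_Z}(I_Z/I_Z^2, \sO_Z)$ and identify it, as a $GL(V) \times GL(U)$-representation, with a quadratic piece (from the $\Sym^2 V^*$-generators in $I_Z$) plus a linear piece $U \otimes \sO_Z$ (from the linear generators). The non-lci tangent subspace $V \oplus U \oplus (V^* \otimes U)$ (translations plus deformations of $V$ in $\Gr_2$) embeds via the derivation map into the quadratic piece and by the obvious inclusions into the linear piece. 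A character computation identifies the rank-$4$ cokernel $N$ as $\Sym^3 V \otimes (\det V)^{-1}$ (pulled back from the tautological bundle on $\Gr_2(\A^\infty)$). The splitting principle, with Chern roots $\alpha,\beta$ of $V$, then yields Chern roots $\alpha,\,\beta,\,2\alpha-\beta,\,2\beta-\alpha$ for $N$ and hence
\[
e(N) = \alpha\beta(2\alpha-\beta)(2\beta-\alpha) = c_2(-2c_1^2 + 9c_2).
\]

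Once $e(N)$ is in hand, the remainder is routine. Under the $\A^1$-equivalences above, the restriction $i^*$ becomes the identity on $\Z[c_1,c_2]$, so by self-intersection the Gysin pushforward $i_*$ is multiplication by $e(N)$; this is injective because $\Z[c_1,c_2]$ is a domain, and the localization sequence for Chow groups yields $\Z[c_1,c_2]/(c_2(-2c_1^2+9c_2))$. For the pure Tate conclusion in $\DM(\Z)$, the same argument applies bidegree by bidegree to motivic cohomology: both ends of the Gysin triangle have cohomology concentrated in Tate bidegrees $(2q,q)$, multiplication by $e(N)$ remains injective there, and Gauss's lemma (the polynomial $c_2(-2c_1^2+9c_2)$ has content $1$) ensures torsion-freeness of the quotient; a weight-structure argument in $\DM(\Z)$, or the direct observation that the connecting map in the Gysin triangle has vanishing source and target on Tate pieces, upgrades the Chow-level computation to a decomposition of $M(\Hilb_3^{\lci}(\A^\infty))$ as pure Tate. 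The main technical obstacle is the normal bundle identification: carefully tracking the $\sO_Z$-module structure of $I_Z/I_Z^2$ with its relations and recognizing the cokernel as the specific $GL_2$-representation giving the correct Chern roots is the most delicate step, and this is where most of the work lies.
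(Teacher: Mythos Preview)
Your proposal is correct and follows essentially the same strategy as the paper: identify the non-lci locus with $\A^\infty\times\Gr_2(\A^\infty)$, compute the normal bundle as $\Sym^3 V\otimes(\det V)^{-1}$ with top Chern class $c_2(-2c_1^2+9c_2)$, and feed this into the Gysin triangle. The only place where the paper is crisper is the final ``upgrade to pure Tate'' step: rather than invoking a weight-structure argument, the paper observes that morphisms between pure Tate motives over $\Spec\Z$ are determined by Chow groups, so your content-$1$/Gauss observation (equivalently, the class is nonzero modulo every prime) shows directly that $M(X)\to M(Y)\{4\}$ is a split projection and hence $M(X-Y)$ is a summand of a pure Tate motive.
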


\begin{rem}
As observed in~\cite[Remark~1.8]{deloop4}, the motive of $\Hilb_3^{\lci}(\A^n)$ is \emph{not} pure Tate for any finite $n \geq 2$. 
\end{rem}

Here, $\DM(\Z)$ denotes Spitzweck's $\infty$-category of motives over $\Spec \Z$ \cite[Chapter 9]{SpitzweckHZ}.
If $S$ is a regular scheme of dimension $\leq 1$, the $\infty$-category $\DM(S)$ categorifies Bloch--Levine motivic cohomology, in the sense that the presheaf $X\mapsto \Maps_{\DM(S)}(M(X),\Z\{n\})$ on smooth $S$-schemes is the Zariski sheafification of Bloch's cycle complex $X\mapsto z^n(X,*)$. By \cite[Theorem 1.2(5)]{Geisser}, we have
\begin{equation*}
\pi_0\Maps_{\DM(S)}(\Z\{a\},\Z\{b\}) = \begin{cases}
\Z & \text{if $a=b$,} \\
\Pic(S) & \text{if $a+1=b$,} \\
0 & \text{otherwise.}
\end{cases}
\end{equation*}
In particular, the homotopy category of pure Tate motives over $\Spec\Z$ is equivalent to the category of free graded abelian groups. For $X\in\Pre(\Sm_S)$, we will write
\[
CH^a(X) = H^{2a}(X,\Z(a)) = \pi_0\Maps_{\DM(S)}(M(X), \Z\{a\}).
\] 
These are the classical Chow groups when $S$ is semilocal and $X$ is a smooth $S$-scheme of finite type (since Bloch's cycle complex is already a Zariski sheaf in this case \cite[Theorem 1.7]{LevineLocalization}).
By \cite[Proposition 10.1]{SpitzweckHZ} and \cite[Proposition 6.2]{NSO2}, we have a canonical isomorphism
\[
CH^*(\BGL_n) \simeq \Z[c_1,\dotsc,c_n]
\]
with $c_i$ in degree $i$.

\begin{proof}[Proof of Theorem \ref{thm:degree3}]
Let $X_n=\Hilb_3(\A^n)$ over $\Z$, and let $Y_n\subset X_n$ be the closed complement of $\Hilb_3^\lci(\A^n)$. Let $X=\colim_n X_n$ and $Y=\colim_n Y_n$; we want
to describe the motive of $X-Y=\colim_n (X_n-Y_n)$.
For an algebraically closed field $k$, every non-lci finite $k$-scheme of degree $3$ is isomorphic to $\Spec k[x,y]/(x,y)^2$ \cite[Table 1]{Poonen__isotypes}, and any two embeddings into $\A^n_k$ differ by an affine transformation.
It follows that the obvious morphism
$f\colon W_n:=\Gr_2(\A^n)\times \A^n\to X_n$
is injective on $k$-points with image $Y_n(k)$
for every algebraically closed field $k$. In particular,
$f$ is quasi-finite; by definition, it suffices to check this over fields,
and then it suffices to prove this over algebraically closed fields
by descent \cite[Tag 02VI]{stacks}. 

Also, $f$ is proper, as it is the restriction
over $\Hilb_3(\A^n)$
of a $\Gr_2(\A^n)$-bundle over $\P^n$ mapping to $\Hilb_3(\P^n)$.
Since $f$ is proper and quasi-finite, it is finite \cite[Tag 02OG]{stacks}.
It is also unramified, meaning that the sheaf of relative differentials
$\Omega^1_{W_n/X_n}$ is zero. Indeed, the sheaf is coherent, and so
this can be checked over fields by Nakayama's lemma. After reducing further
to an algebraically closed base field,
one can check that $f$ is injective on tangent spaces
using the
calculation below of the tangent space to the Hilbert scheme $X_n$;
so $f$ is unramified.
Since $f$ is finite, unramified, and injective on points
over algebraically closed fields,
$f$ is a closed immersion \cite[Tag 04DG]{stacks}.
That is, $Y_n$ with reduced scheme structure
is isomorphic to $\Gr_2(\A^n)\times \A^n$.

By Corollary~\ref{cor:Gr-Hilb},
the inclusion $Y\to X$ is a motivic equivalence. 
Both $Y_n$ and $X_n$ are smooth over $\Z$, with $Y_n$
of codimension 4 in $X_n$.
So we have a cofiber sequence
\[M(X-Y)\to M(X)\to M(Y)\{4\}\]
in $\DM(\Z)$, where $M(Y)\{4\}$ and $M(X)$ are pure Tate. So, to show that
$M(X-Y)$ is pure Tate as well, it suffices to show
that the morphism $M(X) \to M(Y)\{4\}$ is the projection
onto a summand. A morphism between pure Tate motives
is determined by what it does on Chow groups $CH^*$,
so it suffices to show that the pushforward homomorphism
\[CH^{*-4}(Y)\to CH^*(X)\]
is a split injection of abelian groups.
Since the restriction $CH^*(X)\to CH^*(Y)$ is an isomorphism,
it suffices to show that the composed map
\[CH^{*-4}(Y) \to CH^*(X) \to CH^*(Y)\]
is split injective.
This composition is multiplication by the top Chern class
of the normal bundle, $c_4(N_{Y/X})$
\cite[Corollary 6.3]{Fulton}. So it suffices to show that
this class is nonzero modulo every prime number $p$
in $CH^*(Y) = CH^*(\BGL_2) = \Z[c_1,c_2]$. (Indeed, then multiplication
by $c_4(N_{Y/X})$ is injective modulo $p$ for every prime number $p$,
and (looking at each graded piece)
a homomorphism of finitely generated free abelian groups
which is injective modulo every prime is split injective.) We will show
that $c_4(N_{Y/X})$ is $c_2(-2c_1^2 + 9c_2),$
which will complete the proof.

Let $U_n$ be the open subset of $X_n$ of degree $3$ subschemes
not contained in any affine line. Then $Y_n$ is contained in $U_n$,
and so the normal bundle of $Y_n$ in $X_n$ can also be viewed
as its normal bundle in $U_n$.
The point is that each point of $U_n$, viewed as a degree $3$ subscheme
of $\A^n$, spans an affine plane, and so $U_n$ is fibered over the scheme
of affine planes in $\A^n$ with fiber $U_2$. 
More precisely, let $\Aff_n$ be the group scheme of affine automorphisms of $\A^n$, and let $A(m,n)$ be the scheme of affine embeddings $\A^m\hook \A^n$. Then $\Aff_n$ acts on $X_n$, the subschemes $U_n$ and $Y_n$ are $\Aff_n$-invariant, and the closed immersion $Y_n\hook U_n$ can be identified with
\[
Y_2\times_{\Aff_2} A(2,n) \hook U_2\times_{\Aff_2}A(2,n).
\]
Hence, the normal bundle of $Y_n \hook X_n$ is extended from the $\Aff_2$-equivariant normal bundle of $Y_2\hook U_2$. In other words, it is the pullback of $N_{Y_2/X_2}$ by the projection
\[
Y_n \simeq Y_2\times_{\Aff_2} A(2,n) \to [Y_2/\Aff_2].
\]
Since $Y_2\simeq\A^2$, there is an equivalence of stacks $[Y_2/\Aff_2] \simeq \BGL_2$.
Moreover, the induced map $Y=\colim_n Y_n\to \BGL_2$ is a motivic equivalence, inducing the canonical motivic equivalence $\Gr_2 \to \BGL_2$ (by Proposition~\ref{prop:quot}). So our goal is to compute the top Chern class of the $\GL_2$-representation given by the (rank 4) normal bundle to $Y_2$ in $X_2=\Hilb_3(\A^2)$
at the $\Z$-point $S = \Spec\Z[x,y]/(x^2,xy,y^2) \subset\A^2$ in $Y_2$.

Classically~\cite[3.2.1]{Sernesi__Deformations} the tangent bundle to $\Hilb_3(\A^2)$ at $[S]$ is
\[H^0(S, N_{S/\A^2}) = \Hom_{\sO_S}(I/I^2, \sO_S),\]
where $I = (x^2, xy, y^2) \subset \Z[x,y]$
and $\sO_S = \Z[x,y]/I$. By direct inspection, this $\Z$-module is
free of rank $6$ and isomorphic to
\[\Hom_\Z(\Z\{x^2,xy,y^2\}, \Z\{x,y\})
= \Hom(S^2(V^*),V^*),\]
where $V=\A^2$, so $V^*$ is the space
of linear functions on $\A^2$, with basis $x,y$.

To compute the Chern classes of this representation of $\GL_2$, we can work
over $\bC$, so $\GL_2$ becomes linearly reductive.
The Clebsch--Gordan formula gives that
the tangent space of $X_2=\Hilb_3(\A^2)$
at $[S]$ is $V \oplus (S^3(V) \otimes \det(V)^*)$
as a $\GL_2$-representation \cite[Proposition II.5.5]{BtD}.
In particular its only $2$-dimensional subrepresentation is $V$ which thus
coincides with the tangent space to $Y_2$. So the normal space
to $Y_2$ in $X_2$ at $[S]$ is $S^3(V) \otimes \det(V)^*$
as a representation of $\GL_2$.
The top Chern class $c_4$ of $S^3(V) \otimes\det(V)^*$
is $c_2(-2c_1^2 + 9c_2)$ in $CH^*(\BGL_2)= \Z[c_1,c_2]$,
as one checks by the splitting principle
(the fact that the Chow ring of $\BGL_2$ injects into that
of $B\G_m^2$).
That is what we want.
\end{proof}

\section{Stability theorems for the Hilbert scheme}
\label{sec:stability}

We have shown that $\Hilb_d(\A^\infty)$
has the $\A^1$-homotopy type of $\BGL_{d-1}$.
We now prove corresponding stability theorems for the inclusion $\Hilb_d(\A^n)\subset \Hilb_d(\A^\infty)$.
There are several notions of connectivity available in $\A^1$-homotopy theory, and we shall prove that the inclusion $\Hilb_d(\A^n)\subset \Hilb_d(\A^\infty)$ is highly connected in various senses (Theorem~\ref{thm:Hilb-stability}). In particular, we show that it induces an isomorphism in motivic cohomology in weights $\leq n-d+1$ (Corollary~\ref{cor:motivic-stability}).

\begin{rem}\label{rem:connectivity-numbering}
We use the convention that, for $n\geq -1$, a morphism $f$ in $\Spc$ is $n$-connected if its fibers are $n$-connected (meaning that $\pi_i=0$ for $i\leq n$). It follows, in particular, that $f$ is surjective on $\pi_0$. In the older literature, such a map would be called $(n+1)$-connected. Our numbering is becoming more common, as in \cite[Definition 6.55]{Morel} or \cite[Remark 1.6]{RezkBlakersMassey}.
\end{rem}

We will say that a morphism $f\colon Y\to X$ in $\Pre(\Sm_S)$ is \emph{$\A^1$-$n$-connected} if $L_\mot(f)$ is $n$-connected as a morphism of Nisnevich sheaves (i.e., $n$-connected on Nisnevich stalks). We will frequently use Morel's $\A^1$-connectivity theorem \cite[Theorem 6.38]{Morel}, which states that if $k$ is a perfect field and $X\in \Shv_\Nis(\Sm_k)$ is $n$-connected, then $X$ is $\A^1$-$n$-connected.
Recall also that a motivic spectrum is \emph{very $n$-effective} if it can be obtained using colimits and extensions from the image of the functor $\Sigma^n_\T\Sigma^\infty_\T$.

\begin{thm}\label{thm:Hilb-stability}
	Let $n\geq d\geq 0$.
	\begin{enumerate}
		\item The morphism \[\Hilb_d(\A^n)(\bC)\to \Hilb_d(\A^\infty)(\bC)\] is $(2n-2d+2)$-connected and the morphism \[\Hilb_d(\A^n)(\bR)\to \Hilb_d(\A^\infty)(\bR)\] is $(n-d)$-connected.
		\item If $k$ is a field,
the morphism \[\Sigma\Hilb_d(\A^n_k)\to \Sigma\Hilb_d(\A^\infty_k)\] is $\A^1$-$(n-d+1)$-connected. Here, $\Sigma X$ denotes the pushout $*\sqcup_X*$ in $\Pre(\Sm_k)$.
		\item If $k$ is a field, the cofiber of 
		\[\Sigma^\infty_\T\Hilb_d(\A^n_k)_+\to \Sigma^\infty_\T\Hilb_d(\A^\infty_k)_+\]
		in $\SH(k)$ is very $(n-d+2)$-effective. 
	\end{enumerate}
\end{thm}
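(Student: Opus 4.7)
The plan is to reduce the stability statement for $\Hilb_d(\A^n)$ to a classical codimension-counting argument by analyzing the fibers of the forgetful map $\Hilb_d(\A^n)\to \FFLAT_d$ and leveraging the $\A^1$-equivalences $\Hilb_d(\A^\infty)\simeq \FFLAT_d\simeq \Vect_{d-1}$ established in Proposition~\ref{prop:h^fflat via Hilb} and Theorem~\ref{thm:stack-equivalence}. The key geometric input is the following fiberwise description: over an $S$-point classifying a square-zero extension $\sO_S\oplus\sE$ with $\sE$ locally free of rank $d-1$, the fiber of $\Hilb_d(\A^n)\to \FFLAT_d$ identifies with the open subscheme of $\A^n_S\times \Hom_{\sO_S}(\sO_S^n,\sE)\simeq\A^{n+n(d-1)}_S$ consisting of pairs of a ``constant part'' and a surjective $\sO_S$-linear map $\sO_S^n\twoheadrightarrow \sE$. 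The complement is the degeneracy locus of matrices of rank $\leq d-2$, of codimension $n-d+2$. Using the Rees-algebra $\A^1$-homotopy from the proof of Theorem~\ref{thm:stack-equivalence}, the analysis of fibers over a general point of $\FFLAT_d$ reduces to the square-zero case up to $\A^1$-homotopy.

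Granting this fiber description, part~(1) follows from classical transversality: the inclusion of the complement of a closed subset of real codimension $c$ in a smooth manifold is $(c-2)$-connected. Over $\bC$ the non-surjective locus has real codimension $2(n-d+2)$, yielding $(2n-2d+2)$-connectivity; over $\bR$ it has codimension $n-d+2$, yielding $(n-d)$-connectivity. Applying this fiberwise over $\FFLAT_d(\bF)$ (for $\bF\in\{\bR,\bC\}$) and using the equivalence $\Hilb_d(\A^\infty)(\bF)\simeq \FFLAT_d(\bF)$ gives both statements of part~(1).

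For parts~(2) and (3), the tool is motivic purity. If $X\hookrightarrow Y$ is a closed immersion of smooth schemes of codimension $c$, the cofiber of $(Y-X)_+\to Y_+$ in $\SH(k)$ is $\Th(N_{X/Y})$, which is very $c$-effective. When $X$ is singular but of codimension $\geq c$ in smooth $Y$, a filtration by smooth locally closed strata (or a resolution of singularities) together with induction on strata shows that the cofiber is still very $c$-effective. Applying this fiberwise to the map $\Hilb_d(\A^n)\to\FFLAT_d$ with $c=n-d+2$ gives part~(3). Part~(2) then follows from part~(3) by unstabilizing: a motivic $\T$-spectrum that is very $c$-effective corresponds, after one $S^1$-suspension of the underlying motivic space, to an $\A^1$-$(c-1)$-connected pointed space, combining Morel's $\A^1$-connectivity theorem with the standard fact that for $\sE$ a rank-$r$ bundle, $\Th(\sE)$ is $\A^1$-$(r-1)$-connected as a pointed motivic space.

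The main obstacle is handling the singularity of the determinantal locus, since motivic purity requires smoothness of $X$. I would resolve this by using the classical Kempf--Lascoux--Weyman desingularization of the variety of matrices of bounded rank, which expresses each rank stratum as the image of a smooth projective bundle over a Grassmannian; this reduces the very-effectivity claim to a sequence of purity statements on smooth strata. A secondary difficulty is to carry out the fiberwise analysis globally over the singular stack $\FFLAT_d$, for which the Rees-algebra homotopy of Section~\ref{sec:FFLATtoVect} is essential: it allows one to transport the local square-zero picture to the entire base up to $\A^1$-homotopy.
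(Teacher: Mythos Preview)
Your overall strategy---view $\Hilb_d(\A^n)$ as an open subset of a vector bundle over $\FFLAT_d$ whose complement has fiberwise codimension $\geq n-d+2$, then invoke transversality over $\bC/\bR$ and motivic purity for the algebraic statements---is the same as the paper's. However, two steps in your plan do not work as written.

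\textbf{The Rees reduction is unnecessary and does not function as you describe.} You propose to analyze the fiber of $\Hilb_d(\A^n)\to\FFLAT_d$ only over square-zero extensions, and then transport this to a general fiber via the Rees $\A^1$-homotopy. But the Rees homotopy is a path in the \emph{base} $\FFLAT_d$; it does not produce an identification of fibers, nor an $\A^1$-equivalence between the fiber over $[A]$ and the fiber over $[\gr A]$. The paper's argument (Lemma~\ref{lem:codim}) is both simpler and uniform: for \emph{any} finite $k$-algebra $R$ of degree $d$, the locus of embeddings $\Spec R\hookrightarrow\A^n_k$ contains the open subset of $\Hom_k(k^n,R)$ where the composite $k^n\to R\to R/(k\cdot 1)$ is surjective, and the complement of surjections $k^n\to k^{d-1}$ has codimension exactly $n-d+2$. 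No reduction to the square-zero case is needed.

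\textbf{Part (2) does not follow from part (3) by ``unstabilizing''.} The claim that a very $c$-effective motivic spectrum yields, upon taking $\Omega^\infty_\T$ or upon $S^1$-suspending some underlying space, an $\A^1$-$(c-1)$-connected object is not a valid general principle: very effectivity is a stable, $\T$-suspension-based notion and carries no automatic unstable $\A^1$-connectivity content. The implication actually runs the other way. The paper proves (2) and (3) in parallel by a direct \emph{unstable} argument (Proposition~\ref{prop:general-connectivity} and Lemma~\ref{lem:A1connectivity2}): stratify the singular complement $Z\subset V$ by smooth locally closed pieces, use purity on each stratum to see that $\Sigma(V/(V-Z))$ is $\A^1$-$r$-connected, and then conclude via Lemma~\ref{lem:A1connectivity} (which also requires knowing that $\Hilb_d(\A^n)$ and $\FFLAT_d$ are $\A^1$-connected, supplied by Lemma~\ref{lem:Hilb-connectivity}). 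Your stratification idea for (3) is correct, but you must run the same stratification argument \emph{unstably} for (2), not deduce it from (3).

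A smaller point: for the real-points statement in (1), $\FFLAT_d$ is a stack rather than a space, and the transversality argument over $\bR$ needs $X$ to be an algebraic space (since one wants a Nisnevich hypercover so that $X(\bR)$ is the colimit). The paper handles this by first pulling back along $\Hilb_d(\A^m)\to\FFLAT_d$ and then passing to the colimit over $m$.
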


The proof of Theorem~\ref{thm:Hilb-stability} is given at the end of this section.

\begin{cor}\label{cor:motivic-stability}
	Let $k$ be a field and $A$ an abelian group.
	\begin{enumerate}
 		\item The homomorphism of motivic cohomology groups
	\[
	H^*(\BGL_{d-1},A(a))\to H^*(\Hilb_d(\A^n),A(a)) 
	\]
	is an isomorphism for all $a\leq n-d+1$.
 	\item Let $i\geq 0$. The homomorphism of Milnor--Witt motivic cohomology groups
	\[
	 H^{2a+i}(\BGL_{d-1},\tilde A(a))\to H^{2a+i}(\Hilb_d(\A^n),\tilde A(a)) 
	\]
	is an isomorphism if $a+i\leq n-d$ and injective if $a+i=n-d+1$.
	\end{enumerate} 
\end{cor}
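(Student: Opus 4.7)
The plan is to deduce both parts from Theorem~\ref{thm:Hilb-stability}(3) by a weight-vanishing argument. Combining Corollary~\ref{cor:Gr-Hilb} with Proposition~\ref{prop:quot} and the tautological identification $\Vect_{d-1} \simeq \BGL_{d-1}$, there is a zigzag of motivic equivalences between $\Hilb_d(\A^\infty_k)$ and $\BGL_{d-1}$, so the maps appearing in the statement are induced by the inclusion $j\colon \Hilb_d(\A^n_k) \hookrightarrow \Hilb_d(\A^\infty_k)$. Let $C$ denote the cofiber of $\Sigma^\infty_\T j_+$ in $\SH(k)$; by Theorem~\ref{thm:Hilb-stability}(3), $C$ is very $(n-d+2)$-effective.

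For part (1), I would use the standard vanishing $H^{p,q}(Y, A) = 0$ for smooth $Y$ and $q < 0$. Consequently $H^{p,q}(\Sigma^{2m,m}\Sigma^\infty_\T Y_+, A) = H^{p-2m, q-m}(Y, A) = 0$ for $q < m$, and as this vanishing range is closed under colimits and extensions in $\SH(k)$ it propagates from the generators of the very $(n-d+2)$-effective subcategory to $C$, giving $H^{p,q}(C, A) = 0$ for $q \leq n-d+1$. The long exact sequence attached to the cofiber $\Sigma^\infty_\T\Hilb_d(\A^n)_+ \to \Sigma^\infty_\T\Hilb_d(\A^\infty)_+ \to C$ then yields the claimed isomorphism in weights $a = q \leq n-d+1$.

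For part (2), the same strategy applies with the Milnor--Witt motivic cohomology spectrum $\widetilde{HA}$. The input needed is the vanishing $H^{p,q}_{\mathrm{MW}}(Y, \tilde A) = 0$ for smooth $Y$ whenever $p < q$; in the bigrading $(p, q) = (2a+i, a)$ with $i \geq 0$ this reads $a+i < 0$. As before, this gives $H^{p,q}_{\mathrm{MW}}(C, \tilde A) = 0$ whenever $a+i \leq n-d+1$. The long exact sequence then provides an isomorphism when both $(p, q)$ and $(p+1, q)$ lie in the vanishing range, i.e., $a+i \leq n-d$, and injectivity when only $(p, q)$ does, which covers the boundary case $a+i = n-d+1$. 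The main obstacle is pinning down the correct Milnor--Witt weight-vanishing statement, which is where the one-unit discrepancy in the range between (1) and (2) originates and reflects the wider support of $\widetilde{HA}$ relative to $HA$ in the $(p,q)$-bigrading on smooth schemes.
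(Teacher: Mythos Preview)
Your proposal is correct and follows essentially the same route as the paper: deduce both parts from Theorem~\ref{thm:Hilb-stability}(3) by showing that the relevant cohomology of a very $(n{-}d{+}2)$-effective cofiber vanishes in the stated range, then read off the result from the long exact sequence. The only cosmetic difference is that the paper packages the vanishing as ``the (very) $1$-effective cover of $HA$ (resp.\ $H\tilde A$) is zero,'' whereas you verify the vanishing on the smooth generators $\Sigma^m_\T\Sigma^\infty_\T Y_+$ and propagate through colimits and extensions; these are equivalent formulations, and your Milnor--Witt input $H^{p,q}_{\mathrm{MW}}(Y,\tilde A)=0$ for $p<q$ is exactly the fact that $H\tilde A$ lies in the heart of the homotopy $t$-structure (which the paper expresses, in the range that matters, via the Witt-sheaf description in negative weight).
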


\begin{proof}
	Both statements follow from Theorem~\ref{thm:Hilb-stability}(3) and the following facts:
	if $E\in\SH(k)$ is very $r$-effective, then $H^*(E,A(a))=0$ for $a<r$ and $H^{2a+i}(E,\tilde A(a))=0$ for $i\geq 0$ and $a+i<r$.
	The former is because the $1$-effective cover of the motivic Eilenberg--MacLane spectrum $HA$ is zero, since motivic cohomology vanishes in negative weights. The latter is because the very $1$-effective cover of $H\tilde A$ is zero, since in negative weights Milnor--Witt motivic cohomology is given by the cohomology of the Witt sheaf, which vanishes in negative degrees \cite[Example~6.1.5]{BachmannCalmes}.
\end{proof}

\begin{rem}
	In Theorem~\ref{thm:Hilb-stability}(2,3) and in Corollary~\ref{cor:motivic-stability}, we regard $\Hilb_d(\A^n_k)$ as a presheaf on smooth $k$-schemes. Since $\Hilb_d(\A^n_k)$ is in fact a scheme, one can also define its motivic cohomology and Milnor--Witt motivic cohomology more intrinsically by working in $\SH(\Hilb_d(\A^n_k))$. By standard arguments using resolution of singularities or alterations, these two ways of defining the cohomology of non-smooth schemes are known to agree with $\Z[\tfrac 1e]$-linear coefficients, where $e$ is the exponential characteristic of $k$.
\end{rem}

Let $\sZ_d\to\FFLAT_d$ be the universal finite locally free scheme of degree $d$ and let $\sV_{d}(\A^n)$ be the vector bundle over $\FFLAT_d$ defined by 
\[
\sV_{d}(\A^n) = \Hom_{\FFLAT_d}(\sZ_d,\A^n_{\FFLAT_d}).
\]
Then $\Hilb_d(\A^n)\subset \sV_d(\A^n)$ is the open substack of closed immersions $\sZ_d\to \A^n_{\FFLAT_d}$. The key geometric input to all our stability theorems is the following observation:

\begin{lem}\label{lem:codim}
	The closed complement of $\Hilb_d(\A^n)$ in $\sV_d(\A^n)$ has codimension at least $n-d+2$ in every fiber over $\FFLAT_d$.
\end{lem}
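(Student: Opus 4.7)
The plan is to work fiberwise over a geometric point $\Spec(k)\to\FFLAT_d$ classifying a finite $k$-algebra $A$ of dimension $d$ (with $k$ algebraically closed, after base change). The fiber of $\sV_d(\A^n)$ over $[A]$ is the affine $nd$-space $A^n$, and $\Hilb_d(\A^n)$ restricts to the open locus of tuples $(a_1,\dots,a_n)$ generating $A$ as a $k$-algebra. The complement $C$ therefore consists of tuples lying in some proper $k$-subalgebra of $A$.

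First I would stratify according to the dimension of a subalgebra containing the tuple. For each $1 \le m \le d-1$, let $\sS_m \subseteq \Gr_m(A)$ be the subscheme parametrizing $m$-dimensional $k$-subalgebras of $A$, cut out by the conditions $1 \in B$ and $B \cdot B \subseteq B$. The tautological rank-$mn$ vector bundle
\[ \widetilde C_m = \{(B,\vec a) : B \in \sS_m,\; \vec a \in B^n\} \to \sS_m \]
projects to $A^n$, and the union of the images over $m$ equals $C$, since any $\vec a \in C$ lies in the subalgebra it generates. Hence
\[ \dim C \le \max_{1\le m\le d-1}\bigl(mn + \dim \sS_m\bigr). \]

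The key geometric input is the bound $\dim \sS_m \le (m-1)(d-m)$: since every $B \in \sS_m$ contains $1$, there is an inclusion $\sS_m \hook \{B\in\Gr_m(A) : 1\in B\} \cong \Gr_{m-1}(A/k\cdot 1)$. Substituting yields
\[ \codim_{A^n}(C) \ge \min_{1\le m\le d-1} (d-m)(n-m+1). \]
The minimum is attained at $m = d-1$, giving exactly $n-d+2$; for $m \le d-2$ the bound is at least $2(n-d+3)$, which comfortably exceeds $n-d+2$ whenever $n \ge d$.

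The main obstacle is the dimension estimate on $\sS_m$, and in particular the critical case $m=d-1$: codimension-one subalgebras are the stratum where the bound is tight, since they come closest to filling out $A^n$. The point is that any such subalgebra is a hyperplane of $A$ containing $1$, so the corresponding locus lies inside $\mathbb{P}(A/k\cdot 1)^\vee \cong \mathbb{P}^{d-2}$; the remaining strata are handled by cruder Grassmannian bounds, and the rest is routine dimension counting on affine-space bundles.
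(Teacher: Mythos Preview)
Your argument is correct. You correctly identify the fiber as $A^n\cong k^{nd}$, correctly characterize the complement $C$ as tuples lying in a proper subalgebra, and your dimension count via the incidence varieties $\widetilde C_m$ is sound. (One tiny quibble: the inequality $2(n-d+3)\ge n-d+2$ actually holds for all $n\ge d-4$, and for $n\le d-2$ the asserted codimension is nonpositive hence vacuous; so your restriction to $n\ge d$ is harmless but not needed.)

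That said, the paper's proof is considerably shorter because it avoids your stratification entirely. The paper observes that a $k$-algebra map $k[x_1,\dots,x_n]\to R$ is surjective as soon as the induced \emph{linear} map $k\{x_1,\dots,x_n\}\to R/(k\cdot 1)\cong k^{d-1}$ is surjective. Hence the complement of $\Emb_k(S,\A^n_k)$ is contained in the preimage of the non-surjective locus in $\Hom_k(k^n,k^{d-1})$, a determinantal variety of codimension exactly $n-(d-1)+1=n-d+2$. This is effectively just the $m=d-1$ case of your argument, phrased dually (hyperplanes in $R$ containing $1$ rather than subalgebras), and it already gives the full bound. Your finer stratification by subalgebra dimension is correct but does no additional work here: the strata with $m\le d-2$ contribute codimension strictly larger than $n-d+2$, so the critical case is $m=d-1$, where your bound $\dim\sS_{d-1}\le d-2$ comes from the inclusion into $\Gr_{d-2}(A/k\cdot 1)\cong\P^{d-2}$ --- exactly the hyperplane parametrization the paper uses. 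In short, both proofs rest on the same linear fact; the paper simply skips the subalgebra bookkeeping.
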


\begin{proof}
	Let $k$ be a field and $S=\Spec R$ a finite $k$-scheme of degree $d$.
	We must show that the closed complement of $\Emb_k(S,\A^n_k)$ in $\Hom_k(S,\A^n_k)\simeq \A^{nd}_k$ has codimension at least $n-d+2$.
	A $k$-morphism $S\to \A^n_k$
	can be viewed as a $k$-algebra homomorphism
	$k[x_1,\ldots,x_n]\to R$, or as a $k$-linear map
	$k\{x_1,\ldots,x_n\}\to R$. 
	Such a morphism $S\to \A^n_k$ is an embedding if and only if the corresponding homomorphism $k[x_1,\ldots,x_n]\to R$ is surjective.
	In particular, $\Emb_k(S,\A^n_k)$ contains the open subset of $k$-linear maps $k\{x_1,\ldots,x_n\}\to R$ such that $k\{x_1,\ldots,x_n\}\to R/(k\cdot 1)$ is surjective. The subset of non-surjective linear maps $k^n\to k^r$
has codimension $n-r+1$; indeed, the space of all linear maps
has dimension $nr$, whereas any non-surjective map lands in some
hyperplane in $k^r$ (corresponding to a point in $\P^{r-1}$),
and so the subset of non-surjective maps
has dimension $n(r-1)+(r-1)=nr-(n-r+1)$.
Thus the complement of $\Emb_k(S,\A^n_k)$ in $\Hom_k(S,\A^n_k)$
has codimension at least $n-(d-1)+1=n-d+2$, as we want.
\end{proof}

\begin{rem}\label{rem:optimal}
The bound in Lemma \ref{lem:codim} is optimal for $n\geq d-2$, the worst
case being the square-zero extension algebra
$$R=k[x_1,\ldots,x_{d-1}]/(x_ix_j:\text{all }i,j)$$
over a field $k$.
In that case, a $k$-algebra homomorphism
$k[x_1,\ldots,x_n]\to R$ is surjective if and only if the
$k$-linear map $k\{ x_1,\ldots,x_n\}\to R/(k\cdot 1)$ is surjective.
As a result, the subset of non-surjective $k$-algebra homomorphisms
$k[x_1,\ldots,x_n]\to R$ has codimension equal to $n-(d-1)+1=n-d+2$.
\end{rem}

\begin{prop}\label{prop:topological-connectivity}
	Let $k$ be a subfield of $\C$, $X$ an algebraic stack of finite type over $k$, $V\to X$ a vector bundle, and $r\geq 0$. Let $U\subset V$ be an open substack whose complement has codimension at least $r$ in every fiber of $V\to X$.
	Then $U(\bC)\to X(\bC)$ is $(2r-2)$-connected in the classical topology.
If moreover $k\subset\bR$ and $X$ is an algebraic space, then $U(\bR)\to X(\bR)$ is $(r-2)$-connected in the classical topology.
\end{prop}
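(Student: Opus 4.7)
The plan is to exploit the vector bundle structure $V\to X$ to reduce the claim to a classical general position statement about complements of low-dimensional subsets in manifolds.

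First I would reduce to the case where $X$ is a scheme of finite type over $k$. In the complex case, this is done by choosing a smooth atlas $p\colon X_0\to X$ by a scheme and pulling back $V$ and $U$ along $p$; the fiberwise codimension hypothesis is preserved, and the homotopy type of the topological stack $X(\bC)$ is computed by the analytification of the smooth groupoid associated to $p$. In the real case, $X$ is an algebraic space by hypothesis, and the same reduction applies with $p$ an \'etale cover by a scheme, using the presentation of $X(\bR)$ as a topological quotient of $X_0(\bR)$ by the \'etale equivalence relation $(X_0\times_X X_0)(\bR)\rightrightarrows X_0(\bR)$.

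Next, since $V\to X$ is Zariski-locally trivial, the analytifications $V(\bC)\to X(\bC)$ and $V(\bR)\to X(\bR)$ are topological vector bundles (in the analytic topology, even when $X$ is singular) and admit deformation retractions onto the zero section given by scalar multiplication. It thus suffices to prove that $U(\bC)\hookrightarrow V(\bC)$ is $(2r-2)$-connected, and that $U(\bR)\hookrightarrow V(\bR)$ is $(r-2)$-connected. The key geometric input is a local dimension bound: using local triviality of $V\to X$ together with the fiberwise codimension hypothesis, one gets $\dim_p Z\le \dim_p V-r$ at each $p\in Z:=V\setminus U$. Consequently $Z(\bC)$ has local real codimension at least $2r$ in $V(\bC)$, and since the real dimension of the real locus of a complex variety is bounded by its complex dimension, $Z(\bR)$ has local real codimension at least $r$ in $V(\bR)$.

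Finally, I would invoke the classical fact that for a real analytic manifold $M$ and a closed subanalytic subset $Z'\subset M$ of local real codimension at least $c$, the inclusion $M\setminus Z'\hookrightarrow M$ is $(c-2)$-connected. This is proved by choosing a Whitney stratification of $Z'$ and showing, via transversality applied to each stratum, that every map $(D^i,S^{i-1})\to (M,M\setminus Z')$ with $i\le c-1$ can be homotoped rel boundary into $M\setminus Z'$. Specializing to $M=V(\bC)$ and $c=2r$ yields the complex statement; $M=V(\bR)$ and $c=r$ yields the real statement. The main obstacle I anticipate is the first step for stacks, where one must justify that smooth atlases compute the homotopy type of the analytification, and then lift the connectivity conclusion back to the stack; a secondary subtlety is that $V(\bC)$ is not a manifold when $X$ is singular, so the general position argument should be carried out stratum-by-stratum on $V$ as well as on $Z$, which is standard but slightly delicate.
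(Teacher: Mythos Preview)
Your high-level strategy matches the paper's: reduce to schemes via an atlas, then invoke general position. The execution diverges at two points, one minor and one substantive.

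For real points you propose an \'etale atlas, but \'etale covers need not surject on $\bR$-points (e.g.\ squaring on $\Gm$), so the \v{C}ech nerve of $X_0(\bR)$ need not recover $X(\bR)$. The paper instead takes a \emph{Nisnevich} hypercover of the algebraic space $X$ (available by Gruson--Raynaud), for which real points do behave correctly, and then uses that simplicial colimits preserve connectivity to descend.

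The substantive difference is in how the singularity of $X$ is handled. You propose to prove $U(\bC)\hookrightarrow V(\bC)$ is $(2r-2)$-connected by stratified transversality inside the singular space $V(\bC)$, and you rightly flag this as delicate; making it precise requires controlling how perturbations of a map $D^j\to V(\bC)$ interact with the stratifications of $V(\bC)$ and $Z(\bC)$ simultaneously, and you do not carry this out. The paper sidesteps the issue entirely by a different maneuver: given a continuous map $h\colon S^j\to X(\bC)$ with $j<2r$, it first perturbs $h$ to a $C^1$ \emph{semialgebraic} map (via semialgebraic triangulation, simplicial approximation, and a $C^1$-approximation theorem of Fernando--Ghiloni), and then pulls the vector bundle $V$ back along $h$. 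Now the base $S^j$ is a smooth manifold, the pulled-back bundle $W\to S^j$ is a genuine $C^1$ vector bundle, and the pullback of $Z$ is a semialgebraic subset of $W$ of real codimension $\ge 2r$, which one decomposes into finitely many $C^1$ submanifolds. Ordinary $C^1$ transversality then pushes a section of $W$ off this locus, producing the desired lift of $h$ to $U(\bC)$. The semialgebraic perturbation of $h$ is precisely what guarantees the pulled-back bad locus is tame enough for transversality; this pullback-to-the-sphere trick is the idea your outline is missing.
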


\begin{proof}
	Let $X_\bullet\to X$ be a smooth hypercover where each $X_i$ is an affine scheme. Then $X(\bC)$ is the colimit of $X_\bullet(\bC)$ in $\Spc$. If $X$ is an algebraic space, we can choose $X_\bullet\to X$ to be a Nisnevich hypercover by \cite[Proposition 5.7.6]{GrusonRaynaud}, in which case also $X(\bR)$ is the colimit of $X_\bullet(\bR)$ in $\Spc$.
	Since simplicial colimits preserve connectivity, we can assume that $X$ is an affine scheme.

We give the rest of the proof for $\bC$, but exactly the same argument applies to $\bR$.
	Let $0\leq j<2r$; we need to show that $\pi_j U(\bC)\to \pi_jX(\bC)$
	is surjective (for any base points $u_0$ in $U(\bC)$ and $x_0$ in $X(\bC))$,
	and that $\pi_{j-1}U(\bC)\to \pi_{j-1}X(\bC)$ is injective.
	Let $h$ be a continuous map $S^j\to X(\bC)$; we will show that
$h$ has arbitrarily small perturbations that lift to $U(\bC)$. Although the
perturbations we construct need not keep the base point fixed, they will
send the base point $s_0$ of $S^j$ near $u_0$ in $U(\bC)$;
this implies the surjectivity of $\pi_j (U(\bC),u_0)\to \pi_j(X(\bC),x_0)$,
since $U(\bC)$ is locally contractible.

By the triangulation of real semialgebraic sets,
there are semialgebraic triangulations of $S^j$ and $X(\bC)$
\cite[section 1]{hironaka1975triangulations}. By simplicial approximation,
after subdividing the triangulation of $S^j$,
we can perturb $h$ to a simplicial map (still called $h$), which in particular
is semialgebraic. By \cite[Theorem 6]{FernandoGhiloni}, we can perturb $h$
to a $C^1$ semialgebraic map. (This makes sense even though $X(\bC)$
is singular, using local embeddings of $X(\bC)$ into affine space.
Any two embeddings are equivalent by complex analytic maps,
and so the notion of ``$C^1$'' is well-defined.)
The pullback of a $C^1$-vector bundle on $X(\C)$ along $h$
is a $C^1$-vector bundle on $S^j$, for example by considering transition
functions: a composite of $C^1$ maps is $C^1$.
As a result, the pullback
	of the map $U(\bC)\to X(\bC)$ to $S^j$ is an open subset of a $C^1$
semialgebraic real vector bundle $p\colon W\to S^j$,
	with complement a semialgebraic closed subset $Z\subset W$ that has real codimension
	at least $2r$ in every fiber. It follows that
$Z$ has real codimension
	at least $2r$ in $W$. Here $W$ is a $C^1$ manifold,
and $Z$ is a finite union of (possibly noncompact) $C^1$ submanifolds
of real codimension at least $2r$. (The smooth locus of $Z$ is a
(possibly noncompact) $C^1$ manifold,
and its complement has lower dimension; so we can
decompose $Z$ by induction on dimension.)
Choose a $C^1$ section $e$ of the vector
bundle $W\to S^j$ such that $e(s_0)=u_0$. Since $j<2r$,
the transversality theorem for $C^1$ manifolds \cite[Theorem 3.2.1]{Hirsch}
implies that $e$ 
can be perturbed to a $C^1$-section disjoint from $Z$. That is,
the map $h\colon S^j\to X(\bC)$ lifts to $U(\bC)$, as we want.
The same argument (transversality
	applied to the pullback vector bundle) also proves, for $j<2r$,
	that every continuous map $S^{j-1}\to U(\bC)$
	such that the composed map to $X(\bC)$ extends to $D^j$
has arbitrarily small perturbations that
	extend to a continuous map $D^j\to U(\bC)$.
	So $U(\bC)\to X(\bC)$ is $(2r-2)$-connected, as we want.
\end{proof}

\begin{lem}\label{lem:A1connectivity}
	Let $k$ be a perfect field, $f\colon Y\to X$ a morphism in $\Pre(\Sm_k)$, and $n\geq -1$. 
	If $f$ is $\A^1$-$n$-connected, then $\cofib(f)$ is $\A^1$-$(n+1)$-connected.
	The converse holds if $X$ and $Y$ are $\A^1$-$1$-connected. 
\end{lem}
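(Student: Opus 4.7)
The plan is to check connectivity on Nisnevich stalks and reduce both directions to the analogous classical statements in $\Spc$, using Morel's $\A^1$-connectivity theorem \cite[Theorem 6.38]{Morel} to pass between Nisnevich-sheaf connectivity and motivic connectivity.

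For the forward implication, I would begin by noting that $L_\mot$ is a left adjoint and therefore preserves colimits, so $L_\mot\cofib(f)$ identifies with the cofiber of $L_\mot f$ in motivic spaces. This cofiber is computed by Nisnevich-sheafifying the presheaf pushout and then $\A^1$-localizing. On a Nisnevich stalk at a point $x$, the sheafified pushout is the cofiber in $\Spc_*$ of the pointed map $(L_\mot f)_x$, which is $n$-connected by hypothesis. The classical fact that the cofiber of an $n$-connected map of pointed spaces is $(n+1)$-connected then shows that the sheafified cofiber is $(n+1)$-connected as a Nisnevich sheaf, and Morel's theorem guarantees that the final $\A^1$-localization does not decrease this connectivity.

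For the converse, the hypotheses imply that on each Nisnevich stalk $x$ we obtain a cofiber sequence $A\to B\to C$ in $\Spc_*$ with $A=(L_\mot Y)_x$ and $B=(L_\mot X)_x$ simply connected and $C=(L_\mot\cofib(f))_x$ being $(n+1)$-connected. The claim then reduces to the classical statement that in such a setting $A\to B$ is $n$-connected. Simple connectivity of $A$ and $B$ makes the pair $(B,A)$ at least $1$-connected via the homotopy long exact sequence; the identification $H_*(B,A)\cong\tilde H_*(C)$ combined with relative Hurewicz gives $\pi_i(B,A)=0$ for $i\le n+1$; and the homotopy long exact sequence of the pair yields $n$-connectivity of $A\to B$. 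Since this holds on every stalk, $L_\mot f$ is $n$-connected as a morphism of Nisnevich sheaves, i.e., $f$ is $\A^1$-$n$-connected.

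The main point of care is bookkeeping: verifying that the cofiber in motivic spaces is correctly described by taking Nisnevich stalks, which holds because stalks preserve both finite limits and colimits of presheaves of spaces, and invoking Morel's theorem at the correct step in the forward direction. The classical topological inputs (cofiber of an $n$-connected map is $(n+1)$-connected; relative Hurewicz for simply connected pairs) are standard and form the substance of the argument. The simple-connectivity hypothesis on $X$ and $Y$ in the converse cannot be dropped, as the standard example $S^1\to D^2$ with cofiber $S^2$ illustrates.
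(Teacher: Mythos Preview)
Your forward direction is fine and matches the paper's argument: the Nisnevich cofiber $C$ of $L_\mot f$ is stalkwise the cofiber of an $n$-connected map, hence $(n+1)$-connected, and Morel's connectivity theorem then gives $\A^1$-$(n+1)$-connectivity.

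Your converse, however, has a genuine gap. You write that on each stalk $x$ one gets a cofiber sequence $A\to B\to C$ with $C=(L_\mot\cofib(f))_x$. This identification is not valid. The Nisnevich stalk of the cofiber of $L_\mot f$ \emph{in the Nisnevich topos} is indeed $\cofib(A\to B)$, but that object is the paper's $C$, not $L_\mot C$. The hypothesis only tells you that $L_\mot C$ is $(n+1)$-connected, and $\A^1$-localization can change Nisnevich stalks. So you cannot immediately feed the pair $(B,A)$ into relative Hurewicz: you know the connectivity of $(L_\mot C)_x$, not of $C_x$.

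This is exactly the subtlety the paper's proof addresses. The paper shows, by induction on the connectivity, that $C$ itself (before further $\A^1$-localization) is $(n+1)$-connected. The inductive step uses Blakers--Massey to identify $\pi_{n+1}(C)$ with $\pi_n(F)$, where $F$ is the fiber of $L_\mot f$; since $F$ is the fiber of a map of $\A^1$-local objects, $\pi_n(F)$ is strictly $\A^1$-invariant, and then Morel's \cite[Corollary~6.60]{Morel} gives $\pi_{n+1}(C)\simeq\pi_{n+1}(L_\mot C)=0$. Only after establishing that $C$ is $(n+1)$-connected does the stalkwise relative Hurewicz (or long exact sequence) argument apply. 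Your proposal skips this step, so as written it does not prove the converse.
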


\begin{proof}
	Let $C$ be the cofiber of $L_\mot(f)$ in $\Shv_\Nis(\Sm_k)$.
The class of $n$-connected maps is closed under cobase change \cite[Corollary 6.5.1.17]{HTT}. Therefore,
if $L_\mot(f)$ is $n$-connected, then $*\to C$ is $n$-connected, which is equivalent to $C$ being $(n+1)$-connected \cite[Proposition 6.5.1.20]{HTT}.
By the $\A^1$-connectivity theorem, $C$ is $\A^1$-$(n+1)$-connected.
Conversely, suppose that $X$ and $Y$ are $\A^1$-$1$-connected and that $C$ is $\A^1$-$(n+1)$-connected. Let $F$ be the fiber of $L_\mot(f)$. We will prove that $C$ is in fact $(n+1)$-connected, which implies (by considering the Nisnevich stalks) that $F$ is $n$-connected \cite[Theorem IV.7.13]{Whitehead}. Note that $C$ is $1$-connected since $L_\mot(X)$ and $L_\mot(Y)$ are. By induction, we may assume that $C$ is $n$-connected and $F$ is $(n-1)$-connected. By Blakers--Massey, the canonical map $F\to \Omega C$ is then $n$-connected. In particular, $\pi_{n+1}(C)$ is isomorphic to $\pi_n(F)$, hence is strictly $\A^1$-invariant. By \cite[Corollary 6.60]{Morel}, we have $\pi_{n+1}(C) \simeq \pi_{n+1}L_\mot(C) = 0$, as desired.
\end{proof}

\begin{lem}\label{lem:A1connectivity2}
	Let $k$ be a perfect field, $X$ a smooth $k$-scheme, and $Z\subset X$ a closed subscheme of codimension $\geq r$. Then $\Sigma(X/(X-Z))$ is $\A^1$-$r$-connected.
\end{lem}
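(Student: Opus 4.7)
The plan is to argue by Noetherian induction on $Z$. First I would replace $Z$ by $Z_{\mathrm{red}}$, which does not affect $X-Z$ nor the cofiber $X/(X-Z)$; the base case $Z=\emptyset$ is trivial since then $X/(X-Z)=\ast$. For the inductive step, since $k$ is perfect and $Z$ may be assumed reduced and of finite type over $k$, the $k$-smooth locus $U\subset Z$ is a dense open subscheme, so $Z':=Z-U$ is a proper closed subscheme of strictly smaller dimension than $Z$; in particular, $Z'$ still has codimension $\geq r$ in $X$, so the inductive hypothesis applies to it.

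Next I would apply the cofiber sequence of pointed motivic spaces associated to the chain of inclusions $X-Z\subset X-Z'\subset X$:
\[
(X-Z')/(X-Z) \to X/(X-Z) \to X/(X-Z').
\]
Since $U\hookrightarrow X-Z'$ is a smooth closed immersion of smooth $k$-schemes of codimension $\geq r$, Morel--Voevodsky homotopy purity identifies the first term with the Thom space $\Th(N_{U/(X-Z')})$ of a vector bundle of rank $\geq r$. Any such Thom space is Nisnevich-locally of the form $Y_+\wedge(\A^s/(\A^s-0))$ with $s\geq r$; since $\A^s/(\A^s-0)$ is an $s$-fold smash power of the $\A^1$-$0$-connected Tate sphere $T$, it is $\A^1$-$(s-1)$-connected. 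Hence the first term is $\A^1$-$(r-1)$-connected, so its suspension is $\A^1$-$r$-connected. By the inductive hypothesis, $\Sigma(X/(X-Z'))$ is also $\A^1$-$r$-connected.

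Suspending the cofiber sequence yields
\[
\Sigma((X-Z')/(X-Z)) \to \Sigma(X/(X-Z)) \to \Sigma(X/(X-Z')),
\]
whose outer terms are both $\A^1$-$r$-connected, and I would conclude by invoking the closure property that in a cofiber sequence of pointed motivic spaces, the middle term is $\A^1$-$r$-connected whenever the two ends are. I expect this last step to be the main technical point. To justify it, I would apply $\Lmot$ (which preserves cofibers) and pass to Nisnevich stalks, reducing to the classical assertion that in a cofiber sequence $A\to B\to C$ of pointed spaces with $A$ and $C$ both $r$-connected, the space $B$ is $r$-connected: for $r\geq 1$ this follows from the reduced-homology long exact sequence together with van Kampen and Hurewicz applied to $B$, which is simply connected here because every term is a suspension, while for $r=0$ it is automatic since $B$ is itself a suspension. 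Morel's $\A^1$-connectivity theorem then translates this Nisnevich connectivity statement back to $\A^1$-$r$-connectivity.
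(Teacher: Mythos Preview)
Your overall strategy—Noetherian induction via the smooth locus, homotopy purity for the smooth stratum, and the cofiber sequence for the chain $X-Z\subset X-Z'\subset X$—is exactly the paper's approach. The gap is in your justification of the closure property at the end.

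The step ``apply $L_\mot$ (which preserves cofibers) and pass to Nisnevich stalks'' does not do what you want. As a functor to $\H(k)$, $L_\mot$ preserves cofibers, so $L_\mot(\Sigma A)\to L_\mot(\Sigma B)\to L_\mot(\Sigma C)$ is a cofiber sequence \emph{in $\H(k)$}. But $\H(k)$ is not an $\infty$-topos (the $\A^1$-localization is not left exact), and this cofiber sequence is \emph{not} a cofiber sequence in $\Shv_\Nis(\Sm_k)$: the Nisnevich cofiber of $L_\mot(\Sigma A)\to L_\mot(\Sigma B)$ is some $D$ with $L_{\A^1}(D)\simeq L_\mot(\Sigma C)$, not $L_\mot(\Sigma C)$ itself. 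Hence taking Nisnevich stalks does not produce a cofiber sequence of spaces with $r$-connected ends, and your classical argument does not apply. Relatedly, your claim that ``$B$ is simply connected because every term is a suspension'' is not sufficient: a suspension $\Sigma Y$ is only simply connected when $Y$ is connected, and neither $L_\mot$ nor taking stalks preserves the property of being a suspension.

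The paper plugs exactly this hole. It first treats $r\leq 1$ directly, invoking the nontrivial fact that $X/(X-Z)$ is $\A^1$-connected for $\codim Z\geq 1$ (an external reference); this guarantees that for $r\geq 2$ the suspensions $\Sigma(X/(X-Z))$ and $\Sigma((X-Z')/(X-Z))$ are $\A^1$-$1$-connected. Under this $1$-connectivity hypothesis, Lemma~\ref{lem:A1connectivity} (whose converse direction uses Blakers--Massey together with Morel's theorem on strict $\A^1$-invariance of higher homotopy sheaves) shows that the \emph{map} $\Sigma((X-Z')/(X-Z))\to \Sigma(X/(X-Z))$ is $\A^1$-$(r-1)$-connected, and then $r$-connectivity of the source gives $r$-connectivity of the target. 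Your argument is missing both the $\A^1$-connectivity input and the Blakers--Massey step that replaces the naive stalkwise reasoning.
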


\begin{proof}
	We can assume $X$ quasi-compact.
	If $Z$ is smooth, then $X/(X-Z)$ is $\A^1$-$(r-1)$-connected by the purity isomorphism \cite[\sectsign 3, Theorem 2.23]{MV}.
	 In general, the result is trivial if $r=0$. If $r\geq 1$, then $X/(X-Z)$ is $\A^1$-connected by \cite[Lemma 6.1.4]{morel-connectivity}. We can therefore assume $r\geq 2$.
	 Since $k$ is perfect, there exists a filtration
	 \[
	 \emptyset = Z_0\subset Z_1\subset \dotsb\subset Z_n = Z
	 \]
	 by closed subschemes such that $Z_i-Z_{i-1}$ is smooth.
	 We prove the result by induction on the length $n$ of the filtration, the case $n=0$ being trivial.
	 We therefore assume that $\Sigma(X/(X-Z_{n-1}))$ is $\A^1$-$r$-connected. 
	 Since $\Sigma(X/(X-Z))$ and $\Sigma((X-Z_{n-1})/(X-Z))$ are $\A^1$-$1$-connected, Lemma~\ref{lem:A1connectivity} and the cofiber sequence
	\begin{equation*}
	\frac{X-Z_{n-1}}{X-Z} \to \frac {X}{X-Z} \to \frac {X}{X-Z_{n-1}}
	\end{equation*}
	imply that the morphism
	 $\Sigma((X-Z_{n-1})/(X-Z)) \to \Sigma(X/(X-Z))$ is $\A^1$-$(r-1)$-connected. Since the source is $\A^1$-$r$-connected (by the smooth case), it follows that $\Sigma(X/(X-Z))$ is $\A^1$-$r$-connected.
\end{proof}

\begin{prop}\label{prop:general-connectivity}
	Let $k$ be a perfect field, $X\colon \Sm_k^\op\to \Spc$ a presheaf, $V\to X$ a vector bundle, and $r\geq 0$. Let $U\subset V$ be an open subpresheaf such that, for every finite field extension $k'/k$ and every $\alpha\in X(k')$, the closed complement of $\alpha^*(U)$ in $\alpha^*(V)$ has codimension at least $r$.
	\begin{enumerate}
		\item The morphism $\Sigma^2U\to \Sigma^2X$ is $\A^1$-$r$-connected. If moreover $U$ and $X$ are $\A^1$-connected, then the morphism $\Sigma U\to \Sigma X$ is $\A^1$-$(r-1)$-connected.
		\item The cofiber of $\Sigma^\infty_\T U_+\to \Sigma^\infty_\T X_+$ in $\SH(k)$ is very $r$-effective.
	\end{enumerate}
\end{prop}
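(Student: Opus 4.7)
The plan is to reduce to the case in which $X$ is a smooth affine $k$-scheme of finite type, translate the fiberwise codimension hypothesis into a genuine codimension bound on $Z := V \setminus U$ in $V$, and then invoke Lemmas~\ref{lem:A1connectivity} and~\ref{lem:A1connectivity2} together with a stable analogue of the latter for part~(2).

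First I would write $X$ as a colimit of smooth affine $k$-schemes of finite type. All of $\Lmot$, $\Sigma$, and $\Sigma^\infty_\T(-)_+$ preserve colimits, and the fiberwise codimension hypothesis on $U \subset V$ pulls back along any map $X' \to X$, so it is enough to prove both claims in this restricted setting. Next I would check that $\codim_V Z \geq r$ as a genuine codimension: for any irreducible component $Z' \subset Z$, let $W \subset X$ be the closure of $p(Z')$; since $X$ is of finite type over $k$, $W$ contains a closed point $w$ with $k(w)/k$ finite, and at $w$ the hypothesis gives $\dim Z_w \leq \rk V - r$. Since $Z'\cap V_w \subset Z_w$, upper semicontinuity of fiber dimension for $Z' \to W$ yields $\dim Z' - \dim W \leq \rk V - r$, and combining with $\dim V = \dim X + \rk V$ gives $\codim_V Z' \geq r$.

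Part~(1) then follows mechanically: Lemma~\ref{lem:A1connectivity2} shows $\Sigma(V/U)$ is $\A^1$-$r$-connected, hence $\Sigma^2(V/U)$ is $\A^1$-$(r+1)$-connected. Since $\Sigma^2 U$ and $\Sigma^2 V$ are automatically $\A^1$-$1$-connected, Lemma~\ref{lem:A1connectivity} applied to the cofiber sequence $\Sigma^2 U \to \Sigma^2 V \to \Sigma^2(V/U)$ shows that $\Sigma^2 U \to \Sigma^2 V$ is $\A^1$-$r$-connected, and composing with the $\A^1$-equivalence $\Sigma^2 V \to \Sigma^2 X$ coming from the vector bundle projection finishes the first assertion. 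If moreover $U$ and $X$ are $\A^1$-connected, then $\Sigma U$ and $\Sigma V$ are $\A^1$-$1$-connected, and the same argument one suspension lower yields that $\Sigma U \to \Sigma X$ is $\A^1$-$(r-1)$-connected.

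For part~(2), the cofiber in $\SH(k)$ is equivalent to $\Sigma^\infty_\T(V/U)$ via the $\A^1$-equivalence $V \to X$. The remaining work is to prove a stable analogue of Lemma~\ref{lem:A1connectivity2}: for $X$ smooth over $k$ and $Z \subset X$ closed of codimension $\geq r$, $\Sigma^\infty_\T(X/(X-Z))$ is very $r$-effective. I would prove this by imitating the proof of Lemma~\ref{lem:A1connectivity2}, filtering $Z$ so that successive differences are smooth and inducting on the length using the cofiber sequences
\[
\frac{X - Z_{i-1}}{X - Z_i} \to \frac{X}{X - Z_i} \to \frac{X}{X - Z_{i-1}},
\]
together with the fact that very $r$-effective spectra form a subcategory of $\SH(k)$ closed under colimits and extensions. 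The expected obstacle is the smooth base case, where purity identifies $(X - Z_{i-1})/(X - Z_i)$ with the Thom space of the normal bundle $N$ of $Z_i - Z_{i-1}$, of rank $\geq r$; I would conclude by using Zariski-local triviality of $N$ to reduce to the trivial bundle case, where the Thom spectrum is literally $\Sigma^r_\T \Sigma^\infty_\T(Z_i - Z_{i-1})_+$.
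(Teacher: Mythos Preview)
Your proposal is correct and follows essentially the same approach as the paper: reduce to $X$ a smooth $k$-scheme by writing $X$ as a colimit and using that pointed colimits preserve ($\A^1$-)connectivity and very effectivity, then invoke Lemmas~\ref{lem:A1connectivity} and~\ref{lem:A1connectivity2} for part~(1) and a smooth stratification plus purity for part~(2). The paper phrases the reduction slightly differently---it passes to the cofiber $\Sigma\cofib(U\to X)$ \emph{before} reducing to representable $X$, which makes the ``colimits preserve connectivity'' step more transparent---but your route via $\Sigma^2 U \to \Sigma^2 V \to \Sigma^2 X$ is equivalent once you observe (as you implicitly do) that connectivity of the map is controlled by connectivity of the cofiber after double suspension.

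One small wrinkle in your codimension argument: you choose a closed point $w$ in $W = \overline{p(Z')}$, but $w$ need not lie in the image of $Z'$ (the map $Z' \to X$ is not proper), so $Z' \cap V_w$ could be empty and upper semicontinuity gives nothing. The fix is immediate: instead take a closed point $z \in Z'$ and set $w = p(z)$; then $k(w)/k$ is finite, $z \in Z'_w$, and upper semicontinuity at $z$ gives $\dim Z' - \dim W \leq \dim_z Z'_w \leq \dim Z_w \leq \rk V - r$ as you want. The paper simply asserts the codimension bound without proof, so your explicit verification (once patched) is a welcome addition.
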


\begin{proof}
	(1) By Lemma~\ref{lem:A1connectivity}, it suffices to show that $\Sigma\cofib(U\to X)$ is $\A^1$-$r$-connected. 
	Colimits of pointed objects preserve connectivity, hence $\A^1$-connectivity by the $\A^1$-connectivity theorem, so we are reduced by universality of colimits to the case $X\in\Sm_k$. Since $V\to X$ is an $\A^1$-equivalence and $V-U$ has codimension $\geq r$ in $V$, the result follows from Lemma~\ref{lem:A1connectivity2}.
	
	(2) 
	As in the proof of (1), we can assume $X\in\Sm_k$. Let $Z$ be the closed complement of $U$ in $V$. If $Z$ is smooth, the claim follows from the purity isomorphism. In general, since $k$ is perfect, one can stratify $Z$ by smooth subschemes of codimension $\geq r$ in $V$ (cf.\ the proof of Lemma~\ref{lem:A1connectivity2}), thereby reducing to the case $Z$ smooth.
\end{proof}

\begin{lem}\label{lem:Hilb-connectivity}
	Let $k$ be a field and $n\geq d-1\geq 0$. Then $\Hilb_d(\A^n_{k})$ is $\A^1$-connected.
\end{lem}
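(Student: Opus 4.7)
The plan is to show $\Hilb_d(\A^n_k)$ has trivial $\pi_0^{\A^1}$ by $\A^1$-connecting every point, over any field extension $K/k$, to a fixed base point in the image of the map $\Gr_{d-1}(\A^n_k) \to \Hilb_d(\A^n_k)$ from \corref{cor:Gr-Hilb}. This base locus is $\A^1$-connected since $\Gr_{d-1}(\A^n_k)$ is a nonempty Grassmannian (using $n \geq d-1$) admitting an affine Schubert cell decomposition, and its image in $\Hilb_d(\A^n_k)$ inherits this property. The base cases $d\in\{0,1\}$ are immediate, with $\Hilb_0(\A^n)=\Spec k$ and $\Hilb_1(\A^n)=\A^n$.

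For $d \geq 2$, I would construct the connecting $\A^1$-chain via two explicit flat degenerations. First, given $Z = V(I) \subset \A^n_K$, the standard Gr\"obner degeneration with respect to the total-degree weight produces a flat $\A^1_K$-family $\tilde I \subset K[t, x_1, \ldots, x_n]$ generated by $t^{\deg f} f(t^{-1}x)$ for $f \in I$, with fiber $V(I)$ at $t = 1$ and the homogeneous subscheme $V(\mathrm{in}(I))$ at $t = 0$; this is a morphism $\A^1_K \to \Hilb_d(\A^n_K)$ that $\A^1$-homotopes $[Z]$ to a homogeneous point. Second, for a homogeneous point $[V(J)]$ with graded quotient $A = K[x_1, \ldots, x_n]/J$, the Rees construction of \lemref{lem:rees} applied to the filtration $A_0 = K \cdot 1 \subset A_1 = A$ deforms $A$ flatly to the square-zero extension $K \oplus A/K$. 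When the classes $\bar x_1, \ldots, \bar x_n$ span $A/K$ as a $K$-vector space, this abstract deformation can be realized as an embedded $\A^1$-family in $\Hilb_d(\A^n_K)$ via the substitution $x_i \mapsto t\bar x_i \in \Rees(A)$, landing at $t=0$ in the image of $\Gr_{d-1}(\A^n_k)$.

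The main technical obstacle is the case where the generators $\bar x_i$ do not span $A/K$: the naive substitution $x_i \mapsto t\bar x_i$ then fails to surject onto $\Rees(A)$ at $t = 0$, and the embedded family drops below degree $d$ at the special fiber. To handle this, I would use a preliminary reduction via the $\A^1$-connected group schemes $\GL_n$ and $\A^n$ (acting on $\Hilb_d(\A^n_k)$ by linear changes of coordinates and translations), combined with the codimension estimate of \lemref{lem:codim}, to pass to a configuration in which the $\bar x_i$ do span $A/K$; alternatively, a smoothing deformation within $\Hilb_d(\A^n_k)$ can reduce to the \'etale locus $\Hilb_d^{\et}(\A^n_k) \simeq \Sym^d(\A^n_k) \setminus \Delta$, which is an open $\A^1$-connected subset of the symmetric power. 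The careful implementation of this reduction, especially in the borderline regime $n = d - 1$ where \lemref{lem:codim} yields codimension only $1$, constitutes the heart of the proof.
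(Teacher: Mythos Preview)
Your plan correctly identifies the Rees degeneration to a square-zero extension as the endgame—this is exactly what the paper does. However, the handling of the spanning obstruction has a genuine gap. Your Gr\"obner step is superfluous and does not help: for a graded quotient $A = K[x]/J$ the classes $\bar x_1,\dots,\bar x_n$ span only the degree-one piece $A_1 \subset A/K$, so the map $x_i \mapsto t\bar x_i$ still fails to surject onto $\Rees(A)$ unless $A$ was already concentrated in degrees $\leq 1$. More seriously, neither of your proposed fixes works. For (a): the action of $\GL_n \ltimes \A^n$ cannot change whether $\bar x_1,\ldots,\bar x_n$ span $A/K$, since a linear substitution acts on their span by an isomorphism and a translation by constants vanishes modulo $K$; Lemma~\ref{lem:codim} bounds the codimension of the non-\emph{embedding} locus, which is a different condition. (Also, $\GL_n$ is not $\A^1$-connected: the determinant obstructs.) For (b): not every finite scheme is smoothable—$\Hilb_d(\A^n)$ has non-smoothable components for large $d$ even when $n \geq d-1$—so one cannot in general $\A^1$-path into the \'etale locus.

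The paper replaces your first two steps by a single elementary move that directly arranges the spanning condition. Given a surjection $\pi\colon K[x_1,\ldots,x_n] \to A$, suppose $1,\pi(x_1),\ldots,\pi(x_{i-1})$ are linearly independent but $\pi(x_i)$ lies in their span. Then $\pi(x_i)$ is redundant as an algebra generator, so the restriction $\pi|_{\hat{i}}$ omitting $x_i$ is still surjective. One may therefore freely $\A^1$-homotope $\pi(x_i)$ to any $a \in A$ via $\rho_t(x_i) = ta + (1-t)\pi(x_i)$ with the other coordinates fixed; $\rho_t$ remains surjective for all $t$ because it extends $\pi|_{\hat{i}}$. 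Choosing $a \notin K\{1,\pi(x_1),\ldots,\pi(x_{i-1})\}$ increases the rank by one, and iterating (using $n \geq d-1$) yields an embedding with $1,\pi(x_1),\ldots,\pi(x_{d-1})$ a basis of $A$. This is the missing idea; once it is in hand, the Rees map $x_i \mapsto t\pi(x_i)$ surjects onto $\Rees(A)$ and connects $[A]$ to the fixed square-zero point, with no preliminary Gr\"obner degeneration needed.
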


In fact, $\Hilb_d(\A^n_k)$ is $\A^1$-connected for all $n$ and $d$, at least when $k$ is infinite,
as Totaro shows by a more elaborate argument \cite[Theorem 6.1]{TotaroMorse}.
Since $\A^1$-connectedness implies connectedness, this result recovers Hartshorne's theorem that $\Hilb_d(\P^n)$ is connected \cite{HartshorneHilb} (although Hartshorne's theorem also applies to nonconstant Hilbert polynomials).

\begin{proof} 
	By \cite[Lemma 6.1.3]{morel-connectivity} and \cite[Section 2, Corollary 3.22]{MV}, it suffices to show that $\Hilb_d(\A^n)(k)$ is nonempty and that any two points of $\Hilb_d(\A^n)(F)$ can be connected by a chain of affine lines over $F$, for every separable finitely generated field extension $F/k$. It is clear that $\Hilb_d(\A^n)(k)$ is nonempty.

    Fix coordinates $\A^n_F=\Spec F[x_1, \ldots ,x_n]$.
    Consider an $F$-point $[A]\in \Hilb_d(\A^n)(F)$ corresponding to a surjection $\pi\colon F[x_1, \ldots ,x_n]\to A$.
    We claim that $\pi$ can be connected by a chain of affine lines to a surjection such that the images of $1, x_1, \ldots ,x_{d-1}$ are linearly independent. We prove this by adjusting $\pi$ as necessary. If $1,x_1, \ldots ,x_{i-1}$ are linearly independent while $1,x_1, \ldots ,x_{i}$ are dependent, then
\[\pi|_{\hat{i}}\colon F[x_1, \ldots ,x_{i-1}, x_{i+1}, \ldots ,x_n]\to A\]
is a surjection. For $a\in A$ consider the $F[t]$-algebra homomorphism $\rho\colon F[x_{1}, \ldots ,x_n,t]\to A[t]$ defined by $\rho(x_j) = \pi(x_j)$ for $j\neq i$ and $\rho(x_i) = ta+(1-t)\pi(x_i)$. 
For every $\lambda\in \overline{F}$, the map $\rho_\lambda$ is surjective since it extends $\pi|_{\hat{i}}$. Hence, $\rho$ itself is surjective and it defines an $\A^1$-homotopy from $\pi = \rho_0$ to $\rho_1$ in $\Hilb_d(\A^n)$. If we choose $a\in A- F\{\pi(1),\pi(x_1), \ldots ,\pi(x_{i-1})\}$, then the images of $1, x_1, \ldots ,x_{i}$ by $\rho_1$ are linearly independent. Continuing by induction we prove the claim. Having $\pi$ such that $1, x_1, \ldots ,x_{d-1}$ span $A$, we use another $\A^1$-homotopy to assure $\pi(x_j) = 0$ for $j\geq d$.

    As in the proof of Theorem~\ref{thm:stack-equivalence}, consider the algebra $\Rees(A) := F\oplus tA[t]\subset A[t]$ which is finite locally free over $F[t]$. Define an $F[t]$-algebra homomorphism $\tilde{\pi}\colon
F[x_1,\ldots,x_n,t]\to\Rees(A)$ by $\tilde{\pi}(x_i) = t\pi(x_i)$
for every $i$.
Since $1, x_1, \ldots ,x_{d-1}$ span $A$ as an $F$-vector space,
the image of $\tilde{\pi}$ contains $t$ as well as $tA$, and so
$\tilde{\pi}$ is surjective.
Geometrically, $\tilde{\pi}$ corresponds to a morphism $\tilde{\pi}\colon \A^1_F\to \Hilb_d(\A^n)$ that links $[A] = \tilde{\pi}(1)$ with $\tilde{\pi}(0)$. The latter $F$-point corresponds to an embedding of Spec of the square-zero extension $F\oplus F^{d-1}$ at the origin of the $(d-1)$-dimensional subspace $x_{d}=\ldots=x_{n}=0$ in $\A^n$. In particular, it is independent of $[A]$, which proves that every $F$-point can be connected to a fixed one by a chain of affine lines over $F$, as we want.
\end{proof}

\begin{proof}[Proof of Theorem~\ref{thm:Hilb-stability}]
	In (2) and (3), we may replace the field $k$ by a perfect subfield, since the conclusions are preserved by essentially smooth base change.
	Recall that the forgetful map $\Hilb_d(\A^\infty)\to\FFLAT_d$ is a motivic equivalence, see Corollary~\ref{cor:Hilb=FFlat}.
	For the connectivity on complex points and for (2) and (3), we apply Propositions \ref{prop:topological-connectivity} and~\ref{prop:general-connectivity} with $X=\FFLAT_d$, $V=\sV_d(\A^n)$, $U=\Hilb_d(\A^n)$, and $r=n-d+2$, see Lemma~\ref{lem:codim}.
	For (2), note that both $\Hilb_d(\A^n)$ and $\FFLAT_d$ are $\A^1$-connected by Lemma~\ref{lem:Hilb-connectivity}. To prove the connectivity on real points, we pull back the above situation to the scheme $X_m=\Hilb_d(\A^m)$. By Proposition~\ref{prop:topological-connectivity}, $(U\times_XX_m)(\bR)\to X_m(\bR)$ is $(n-d)$-connected. Taking the colimit over $m$ proves the result, using the fact that $\Hilb_d(\A^\infty)\to\FFLAT_d$ is a universal $\A^1$-equivalence on affine schemes, see Corollary~\ref{cor:Hilb=FFlat}.
\end{proof}

\begin{rem}
	Note that the assumptions of Propositions \ref{prop:topological-connectivity} and~\ref{prop:general-connectivity} are preserved by any base change $X' \to X$. For example, by considering the open substack $\FSYN_d\subset \FFLAT_d$, we see that Theorem~\ref{thm:Hilb-stability} holds with $\Hilb_d$ replaced by $\Hilb_d^\mathrm{lci}$.
\end{rem}

\bibliographystyle{alphamod}
\let\mathbb=\mathbf
{\small
\bibliography{references}
}
\parskip 0pt
\end{document}